\pdfoutput=1

\documentclass[10pt]{amsart}

\usepackage{stmaryrd}

\usepackage[colorlinks=true]{hyperref}
\usepackage{amsthm}
\theoremstyle{definition}
\newtheorem{definition}{Definition}
\theoremstyle{lemma}
\newtheorem{lemma}{Lemma}
\newtheorem*{remark}{Remark}
\theoremstyle{theorem}
\newtheorem{theorem}{Theorem}
\theoremstyle{assumption}
\newtheorem{assumption}{Assumption}

\usepackage{fullpage}
\usepackage{amsmath,amssymb,amsfonts}
\usepackage{array} 
\usepackage{mathtools}
\usepackage{bm}
\usepackage{bbm}
\usepackage{tikz}
\usepackage[normalem]{ulem}
\usepackage{hhline}
\usepackage{graphicx}
\usepackage{subfig}
\usepackage{color}
\usepackage{doi}

%% ====================================== graphics

\usepackage{pgfplots}
\usepackage{pgfplotstable}
\definecolor{markercolor}{RGB}{124.9, 255, 160.65}
\pgfplotsset{
compat=1.3,
width=10cm,
tick label style={font=\small},
label style={font=\small},
legend style={font=\small}
}

\usetikzlibrary{calc}
\usetikzlibrary{intersections} 

%%% START MACRO FOR ANNOTATION OF TRIANGLE WITH SLOPE %%%.
\newcommand{\logLogSlopeTriangle}[5]
{
    % #1. Relative offset in x direction.
    % #2. Width in x direction, so xA-xB.
    % #3. Relative offset in y direction.
    % #4. Slope d(y)/d(log10(x)).
    % #5. Plot options.

    \pgfplotsextra
    {
        \pgfkeysgetvalue{/pgfplots/xmin}{\xmin}
        \pgfkeysgetvalue{/pgfplots/xmax}{\xmax}
        \pgfkeysgetvalue{/pgfplots/ymin}{\ymin}
        \pgfkeysgetvalue{/pgfplots/ymax}{\ymax}

        % Calculate auxilliary quantities, in relative sense.
        \pgfmathsetmacro{\xArel}{#1}
        \pgfmathsetmacro{\yArel}{#3}
        \pgfmathsetmacro{\xBrel}{#1-#2}
        \pgfmathsetmacro{\yBrel}{\yArel}
        \pgfmathsetmacro{\xCrel}{\xArel}

        \pgfmathsetmacro{\lnxB}{\xmin*(1-(#1-#2))+\xmax*(#1-#2)} % in [xmin,xmax].
        \pgfmathsetmacro{\lnxA}{\xmin*(1-#1)+\xmax*#1} % in [xmin,xmax].
        \pgfmathsetmacro{\lnyA}{\ymin*(1-#3)+\ymax*#3} % in [ymin,ymax].
        \pgfmathsetmacro{\lnyC}{\lnyA+#4*(\lnxA-\lnxB)}
        \pgfmathsetmacro{\yCrel}{\lnyC-\ymin)/(\ymax-\ymin)} % THE IMPROVED EXPRESSION WITHOUT 'DIMENSION TOO LARGE' ERROR.

        % Define coordinates for \draw. MIND THE 'rel axis cs' as opposed to the 'axis cs'.
        \coordinate (A) at (rel axis cs:\xArel,\yArel);
        \coordinate (B) at (rel axis cs:\xBrel,\yBrel);
        \coordinate (C) at (rel axis cs:\xCrel,\yCrel);

        % Draw slope triangle.
        \draw[#5]   (A)-- node[pos=0.5,anchor=north] {}
                    (B)-- 
                    (C)-- node[pos=0.5,anchor=west] {#4}
                    cycle;
    }
}
%%% END MACRO FOR ANNOTATION OF TRIANGLE WITH SLOPE %%%.

%%% END MACRO FOR ANNOTATION OF TRIANGLE WITH SLOPE %%%.

%%% START MACRO FOR ANNOTATION OF TRIANGLE WITH SLOPE %%%.

%%% END MACRO FOR ANNOTATION OF TRIANGLE WITH SLOPE %%%.

%%% START MACRO FOR ANNOTATION OF TRIANGLE WITH SLOPE %%%.
\newcommand{\logLogSlopeTriangleFlip}[5]
{
    % #1. Relative offset in x direction.
    % #2. Width in x direction, so xA-xB.
    % #3. Relative offset in y direction.
    % #4. Slope d(y)/d(log10(x)).
    % #5. Plot options.

    \pgfplotsextra
    {
        \pgfkeysgetvalue{/pgfplots/xmin}{\xmin}
        \pgfkeysgetvalue{/pgfplots/xmax}{\xmax}
        \pgfkeysgetvalue{/pgfplots/ymin}{\ymin}
        \pgfkeysgetvalue{/pgfplots/ymax}{\ymax}

        % Calculate auxilliary quantities, in relative sense.
        %\pgfmathsetmacro{\xArel}{#1}
        %\pgfmathsetmacro{\yArel}{#3}
        \pgfmathsetmacro{\xBrel}{#1-#2}
        \pgfmathsetmacro{\yBrel}{#3}
        \pgfmathsetmacro{\xCrel}{#1}

        \pgfmathsetmacro{\lnxB}{\xmin*(1-(#1-#2))+\xmax*(#1-#2)} % in [xmin,xmax].
        \pgfmathsetmacro{\lnxA}{\xmin*(1-#1)+\xmax*#1} % in [xmin,xmax].
        \pgfmathsetmacro{\lnyA}{\ymin*(1-#3)+\ymax*#3} % in [ymin,ymax].
        \pgfmathsetmacro{\lnyC}{\lnyA+#4*(\lnxA-\lnxB)}
        \pgfmathsetmacro{\yCrel}{\lnyC-\ymin)/(\ymax-\ymin)} % THE IMPROVED EXPRESSION WITHOUT 'DIMENSION TOO LARGE' ERROR.

	\pgfmathsetmacro{\xArel}{\xBrel}
        \pgfmathsetmacro{\yArel}{\yCrel}

        % Define coordinates for \draw. MIND THE 'rel axis cs' as opposed to the 'axis cs'.
        \coordinate (A) at (rel axis cs:\xArel,\yArel);
        \coordinate (B) at (rel axis cs:\xBrel,\yBrel);
        \coordinate (C) at (rel axis cs:\xCrel,\yCrel);

        % Draw slope triangle.
        \draw[#5]   (A)-- node[pos=0.5,anchor=east] {#4}
                    (B)-- 
                    (C)-- node[pos=0.5,anchor=south] {}
                    cycle;
    }
}
%%% END MACRO FOR ANNOTATION OF TRIANGLE WITH SLOPE %%%.

%\DeclareMathOperator{\diag}{diag}
\DeclareMathOperator{\diam}{diam}

\renewcommand{\hat}{\widehat}
\renewcommand{\tilde}{\widetilde}
\newcommand{\td}[2]{\frac{{\rm d}#1}{{\rm d}{\rm #2}}}
\newcommand{\pd}[2]{\frac{\partial#1}{\partial#2}}

\newcommand{\nor}[1]{\left\| #1 \right\|}
\newcommand{\LRp}[1]{\left( #1 \right)}
\newcommand{\LRs}[1]{\left[ #1 \right]}
\newcommand{\LRa}[1]{\left\langle #1 \right\rangle}
\newcommand{\LRb}[1]{\left| #1 \right|}
\newcommand{\LRc}[1]{\left\{ #1 \right\}}

\newcommand{\jump}[1] {\ensuremath{\llbracket#1\rrbracket}}
\newcommand{\avg}[1] {\ensuremath{\LRc{\!\{#1\}\!}}}
\newcommand{\Grad} {\ensuremath{\nabla}}

\newcommand{\diag}[1]{{\rm diag}\LRp{#1}}

\newcommand*\diff[1]{\mathop{}\!{\mathrm{d}#1}} % d in integrand

\date{}
\author{Jesse Chan, Lucas C. Wilcox}
%\title{On discretely entropy stable weight-adjusted discontinuous Galerkin methods: curvilinear meshes}
\title{Discretely entropy stable weight-adjusted discontinuous Galerkin methods on curvilinear meshes}
\graphicspath{{./figs/}}

\begin{document}

\maketitle

%\begin{frontmatter}
%\title{On discretely entropy stable weight-adjusted discontinuous Galerkin methods: curvilinear meshes}
%\author[rice]{Jesse Chan\corref{cor1}}
%\ead{Jesse.Chan@caam.rice.edu}
%\address[rice]{Department of Computational and Applied Mathematics, Rice University, 6100 Main St, Houston, TX, 77005}
%\author[nps]{Lucas C.\ Wilcox\corref{cor2}}
%\ead{lwilcox@nps.edu}
%\address[nps]{Naval Postgraduate School, Department of Applied Mathematics, 833 Dyer Road; Building 232, SP-262
%Monterey, California 93943}

\begin{abstract}
We construct entropy conservative and entropy stable high order accurate discontinuous Galerkin (DG) discretizations for time-dependent nonlinear hyperbolic conservation laws on curvilinear meshes.  The resulting schemes preserve a semi-discrete quadrature approximation of a continuous global entropy inequality.  The proof requires the satisfaction of a discrete geometric conservation law, which we enforce through an appropriate polynomial approximation.  We extend the construction of entropy conservative and entropy stable DG schemes to the case when high order accurate curvilinear mass matrices are approximated using low-storage weight-adjusted approximations, and describe how to retain global conservation properties under such an approximation.  The theoretical results are verified through numerical experiments for the compressible Euler equations on triangular and tetrahedral meshes.  %in two and three dimensions.
\end{abstract}

%\end{frontmatter}

%\tableofcontents

\section{Introduction}

High order discontinuous Galerkin (DG) methods are attractive for the simulation of time-dependent wave propagation due to their low numerical dispersion and dissipation \cite{hu1999analysis, ainsworth2004dispersive} and ability to handle unstructured meshes and complex geometries.  These same properties make them attractive for the resolution of transient waves and vortices in compressible flow \cite{wang2013high}.  However, whereas the construction of stable DG methods for wave propagation is relatively well-established, it is not possible to extend the same formulations directly from linear wave problems to the nonlinear conservation laws which govern compressible fluid flow.  

The low numerical dissipation of high order DG methods combined with the lack of inherently stable formulations for nonlinear conservation laws has given high order discretizations the reputation of being non-robust and highly sensitive to instabilities and under-resolved features \cite{wang2013high}.  This instability is addressed in practice by adding additional stabilization through limiting, filtering, or artificial viscosity \cite{persson2006sub, krivodonova2007limiters, barter2010shock, guermond2011entropy}.  However, these approaches are typically ad-hoc, and do not guarantee the stability of the resulting scheme.  Moreover, high order accuracy can be lost if the stabilization is too strong.  

The lack of inherently stable formulations was addressed for high order nodal DG methods on quadrilateral and hexahedral meshes in \cite{fisher2013high, carpenter2014entropy}.  The resulting schemes ensure that that the numerical solution satisfies a semi-discrete version of an entropy inequality, independently of discrete effects such as under-integration.  This results in significantly more robust high order simulations, where the numerical solution does not blow up even in the presence of under-resolved features such as shock discontinuities or turbulence.  High order entropy stable schemes have since been extended to staggered grid and non-conforming \cite{parsani2016entropy, friedrich2017entropy} tensor product elements, as well as to simplicial meshes \cite{chen2017entropy, chan2017discretely, crean2018entropy}.  

Entropy stable methods have largely relied on a finite difference summation-by-parts (SBP) framework.  The summation-by-parts property holds for spectral element DG methods (DG-SEM), which assume a polynomial basis which collocates the solution at Gauss--Legendre--Lobatto (GLL) quadrature nodes on tensor product elements.  The collocation approach requires the number of quadrature nodes to be identical to the number of basis functions.  Because analogous quadrature rules on the triangle and tetrahedron must contain more nodes than the dimension of the underlying polynomial space \cite{helenbrook2009existence, hicken2016multidimensional}, GLL-like collocation cannot be replicated on simplicial elements.  It is still possible to construct entropy stable schemes on simplicial elements within the SBP framework \cite{chen2017entropy, crean2018entropy}.   However, the resulting SBP operators are not ``modal'', in the sense that the matrices are not associated with an underlying basis or approximation space.  

Entropy stable schemes were extended to modal discretizations on affine meshes in \cite{chan2017discretely}, allowing for the use of arbitrary basis functions and over-integrated quadrature rules.  In this work, we show how to extend the construction of modal high order entropy stable schemes to curved meshes.  As noted in \cite{crean2018entropy}, this requires the use of a split formulation for the geometric terms involved in differentiation, as well as the satisfaction of a discrete geometric conservation law (GCL) \cite{thomas1979geometric, kopriva2006metric}.  We also show how to extend entropy stable schemes to accomodate weight-adjusted mass matrices, which provide low storage approximations of inverse weighted mass matrices appearing for curved meshes \cite{chan2016weight2}.  These weight-adjusted approximations can be applied more efficiently than inverse weighted mass matrices on many-core architectures such as Graphics Processing Units (GPUs) \cite{chan2017weight}.   However, additional steps are required to ensure entropy stability and the conservation of mass, momentum, and energy under weight-adjusted mass matrices.

The paper is organized as follows: Section~\ref{sec:1} reviews the derivation of entropy inequalities for systems of nonlinear conservation laws, and describes why this does not hold under a high order DG discretization.  Section~\ref{sec:2} describes the construction of entropy stable DG methods for curved meshes using weighted mass matrices, and Section~\ref{sec:3} describes how to extend this to the case of weight-adjusted mass matrices.  Section~\ref{sec:4} describes how to enforce the geometric conservation law using a modification of the approach described in \cite{kopriva2006metric}, and Section~\ref{sec:num} concludes by presenting two and three-dimensional numerical experiments which verify the accuracy and stability of the presented schemes.  

\section{Systems of nonlinear conservation laws}
\label{sec:1}
This work addresses high order accurate schemes for the following system of $n$ nonlinear conservation laws in $d$ dimensions 
\begin{equation}
  \pd{\bm{u}}{t} + \sum_{j=1}^d\pd{\bm{f}_j(\bm{u})}{x_j}  = 0, \qquad \bm{u} : \mathbb{R}^d \times [0,\infty) \rightarrow  \mathbb{R}^n, \qquad \bm{f}_j : \mathbb{R}^n\rightarrow \mathbb{R}^n,
\label{eq:nonlineqs}
\end{equation}
where $\bm{u}(\bm{x},t)$ denotes the \emph{conservative variables} for this system.  %We assume the system is well-posed under appropriate boundary conditions.
We are interested in nonlinear conservation laws for which an entropy function $U(\bm{u})$ exists, where $U(\bm{u})$ is convex with respect to the conservative variables $\bm{u}$.  If this function exists, then it is possible to define \emph{entropy variables} $\bm{v}(\bm{u}) = \pd{U}{\bm{u}}$.  These functions symmetrize the system of nonlinear conservation laws (\ref{eq:nonlineqs}) \cite{hughes1986new}.  

It can be shown (see, for example, \cite{mock1980systems}) that symmetrization is equivalent to the existence of entropy flux functions $F_j(\bm{u})$ and entropy potentials $\psi_j$ such that
\[
\bm{v}^T \pd{\bm{f}_j}{\bm{u}} = \pd{F_j(\bm{u})}{\bm{u}}^T, \qquad \psi_j(\bm{v}) = \bm{v}^T\bm{f}_j(\bm{u}(\bm{v})) - F_j(\bm{u}(\bm{v})), \qquad \psi_j'(\bm{v}) = \bm{f}_j(\bm{u}(\bm{v})).
\]
Smooth solutions of (\ref{eq:nonlineqs}) can be shown to satisfy a conservation of entropy by multiplying (\ref{eq:nonlineqs}) by $\bm{v}(\bm{u})$. Using the definition of the entropy variables, entropy flux, and the chain rule yields
\begin{equation}
\bm{v}^T\pd{\bm{f}_j(\bm{u})}{x_j} = \pd{U(\bm{u})}{\bm{u}}^T\pd{\bm{f}_j(\bm{u})}{\bm{u}}\pd{\bm{u}}{x_j} = \pd{F_j(\bm{u})}{x_j}, \qquad \pd{U(\bm{u})}{t} + \sum_{j=1}^d \pd{F_j(\bm{u})}{x_j} = 0.
\label{eq:chainrule}
\end{equation}
%and that
%\begin{equation*}
%
%%\label{eq:entropyeqstrong}
%\end{equation*}
Let $\Omega \subset \mathbb{R}^d$ be a closed domain with boundary $\partial \Omega$.  Integrating over $\Omega$ and using Gauss' theorem on the spatial derivative yields
\begin{equation}
  \int_{\Omega}\pd{U(\bm{u})}{t}\diff{\bm{x}} + \int_{\partial \Omega} \sum_{j=1}^d \LRp{\bm{v}{(\bm{u})}^T\bm{f}_j(\bm{u}) - \psi_j\LRp{\bm{v}(\bm{u})}}n_j \diff{\bm{x}} = 0,
\label{eq:entropyeq}
\end{equation}
where $\bm{n} = \LRp{n_1,\ldots,n_d}^T$ denotes the unit outward normal vector on $\partial \Omega$.  

General solutions (including non-smooth solutions such as shocks) satisfy an entropy \emph{inequality}
\begin{equation}
  \int_{\Omega}\pd{U(\bm{u})}{t}\diff{\bm{x}} + \int_{\partial \Omega} \sum_{j=1}^d \LRp{\bm{v}{(\bm{u})}^T\bm{f}_j(\bm{u}) - \psi_j\LRp{\bm{v}(\bm{u})}}n_j \diff{\bm{x}} \leq 0,
\label{eq:entropyineq}
\end{equation}
which results from considering solutions of an appropriate viscous form of the equations (\ref{eq:nonlineqs}) and taking the limit as viscosity vanishes.  In this work, schemes which satisfy a discrete form of (\ref{eq:entropyineq}) will be constructed by first enforcing a discrete version of entropy conservation (\ref{eq:entropyeq}), then adding an appropriate numerical dissipation which will enforce the entropy inequality (\ref{eq:entropyineq}).

\subsection{Standard DG formulations for nonlinear conservation laws}

We begin by reviewing the construction of standard high order accurate DG formulations for (\ref{eq:nonlineqs}).  

\subsubsection{Mathematical notation}

Let the domain $\Omega \subset \mathbb{R}^d$ be decomposed into elements (subdomains) $D^k$, and let $\hat{D}$ denote a $d$-dimensional reference element with boundary $\partial \hat{D}$.  Let $\hat{\bm{x}} = \LRc{\hat{x}_1,\ldots,\hat{x}_d}$ denote coordinates on $\hat{D}$, and let $\hat{n}_i$ denote and the $i$th component of the unit normal vector on $\partial \hat{D}$.  We assume that $\hat{n}_i$ is constant; i.e., that the faces of the reference element are planar (this assumption holds for all commonly used reference elements \cite{chan2015gpu}).  

We will assume that each physical element $D^k$ is the image of $\hat{D}$ under some smoothly differentiable mapping $\bm{\Phi}_k(\hat{\bm{x}})$ such that
\[
\bm{x} = \bm{\Phi}_k(\hat{\bm{x}}), \qquad \bm{x}\in D^k.
\]
This also implies that integrals over physical elements can be mapped back to the reference element as follows
\[
\int_{D^k} u \diff{\bm{x}} = \int_{\hat{D}} u J^k\diff{\hat{\bm{x}}}, 
\]
where $J^k$ denotes the determinant of the Jacobian of $\bm{\Phi}_k$.  Integrals over physical faces of $D^k$ can similarly be mapped back to reference faces.

We define an approximation space using degree $N$ polynomials on the reference element.  For example, on a $d$-dimensional reference simplex, the natural polynomial space are total degree $N$ polynomials 
\[
P^N\LRp{\widehat{D}} = \LRc{\hat{x}_1^{i_1}\ldots\hat{x}_d^{i_d}, \quad \hat{\bm{x}} \in \widehat{D}, \quad 0\leq \sum_{k=1}^d i_k \leq N}.
\]
Other element types possess different natural polynomial spaces \cite{chan2015gpu}, but typically contain the space of total degree $N$ polynomials.  This work is directly applicable to other elements and spaces as well.  We denote the dimension of the approximation space $P^N$ as $N_p = {\rm dim}\LRp{P^N\LRp{\widehat{D}}}$.  We also define trace spaces for each face of the reference element.  Let $\hat{f}$ be a face of the reference element $\hat{D}$.  The trace space over $\hat{f}$ is defined as the space of traces of functions in $P^N\LRp{\hat{D}}$
\[
P^N_f \LRp{\hat{f}} = \LRc{ \left.u\right|_{\hat{f}}, \quad u \in P^N\LRp{\hat{D}}}, \qquad \hat{f}\in \partial\hat{D}.
\]
We denote the dimension of the trace space as ${\rm dim}\LRp{P^N_f\LRp{\hat{f}}} = N^f_p$.

We next define the $L^2$ norm and inner products over the reference element $\hat{D}$ and the surface of the reference element $\partial \hat{D}$ as
\[
  \LRp{\bm{u},\bm{v}}_{\hat{D}} =  \int_{\widehat{D}} \bm{u}\cdot\bm{v} \diff{\hat{\bm{x}}}, \qquad \nor{\bm{u}}^2_{\hat{D}} = (\bm{u},\bm{u})_{\hat{D}}, \qquad \LRa{\bm{u},\bm{v}}_{\partial \hat{D}} = \int_{\partial \hat{D}} \bm{u} \cdot \bm{v} \diff{\hat{\bm{x}}}.
\]
We also introduce the continuous $L^2$ projection operator $\Pi_N$ and lifting operator $L$.  For $u \in L^2\LRp{\widehat{D}}$, the $L^2$ projection $\Pi_N u$ is defined through
\begin{equation}
\int_{\widehat{D}} \Pi_N u v \diff{\hat{\bm{x}}} = \int_{\widehat{D}} u v \diff{\hat{\bm{x}}}, \qquad \forall v\in P^N\LRp{\hat{D}}.
\label{eq:l2proj}
\end{equation}
Likewise, for a boundary function $u \in L^2\LRp{\partial \hat{D}}$, the lifting operator $L$ \cite{hesthaven2007nodal, di2011mathematical} is defined through 
\begin{equation}
\LRp{L u,v}_{\hat{D}} = \LRa{u,v}_{\partial \hat{D}}, \qquad \forall v \in P^N\LRp{\hat{D}}.
\label{eq:lift}
\end{equation}

Finally, we introduce $L^2, L^{\infty}$ Sobolev norms and spaces, which will be utilized for error estimates.  The $L^2$ space is defined as the space of functions with finite $L^2$ norm.  The Lebesgue $L^\infty$ norm and the associated $L^\infty$ space over a general domain $\Omega$ are 
\begin{align*}
\nor{u}_{L^{\infty}\LRp{\Omega}} &= \inf\LRc{C \geq 0: \LRb{u\LRp{\bm{x}}} \leq C \quad \forall \bm{x}\in \Omega}, \qquad
L^{\infty}\LRp{\Omega} = \LRc{u: \Omega\rightarrow \mathbb{R}, \quad \nor{u}_{L^{\infty}\LRp{\Omega}} < \infty}.
\end{align*}
The $L^2$ and $L^{\infty}$ Sobolev norms of degree $s$ are then defined  as
\begin{align*}
%\LRb{u}_{W^{s,p}\LRp{\Omega}} &= \LRp{\sum_{\LRb{\alpha}= s} \nor{ D^{\alpha} u}_{L^p\LRp{\Omega}}^p}^{1/p}, \qquad \LRb{u}_{W^{s,\infty}\LRp{\Omega}} = \max_{\LRb{\alpha}= s} \nor{D^{\alpha}u}_{L^{\infty}\LRp{\Omega}}\\
\nor{u}_{W^{s,2}\LRp{\Omega}}^2 &= {\sum_{\LRb{\alpha}\leq s} \nor{ D^{\alpha} u}_{L^2\LRp{\Omega}}^2}, \qquad \nor{u}_{W^{s,\infty}\LRp{\Omega}} = \max_{\LRb{\alpha}\leq s} \nor{D^{\alpha}u}_{L^{\infty}\LRp{\Omega}},
\end{align*}
respectively.  Here $\alpha = \LRc{\alpha_1,\ldots,\alpha_d}$ is a multi-index
of order $\LRb{\alpha} = \alpha_1 + \cdots + \alpha_d$ such that
\[
D^{\alpha}u = \pd{^{\alpha_1}}{x_1^{\alpha_1}}\cdots\pd{^{\alpha_d}}{x_d^{\alpha_d}} u.
\]
The Sobolev spaces $W^{s,2}$ and $W^{s,\infty}$ are defined as the spaces of functions with finite $L^2$ and $L^\infty$ Sobolev norms of degree $s$, respectively.

\subsubsection{Discontinuous Galerkin formulations and the $L^2$ projection}

Discontinuous Galerkin methods have been widely applied to systems of nonlinear conservation laws (\ref{eq:nonlineqs}) \cite{cockburn1989tvb, cockburn1998runge, cockburn2001devising}.  The development of new discontinuous Galerkin methods for nonlinear conservation laws has focused heavily on the choice of numerical flux \cite{qiu2006numerical} or the development of  slope limiters \cite{krivodonova2007limiters, zhang2012maximum} and artificial viscosity strategies \cite{persson2006sub, barter2010shock, klockner2011viscous}.  However, the treatment of the underlying volume discretization remains relatively unchanged between each of these approaches.  

Ignoring terms involving filters, limiters, or artificial viscosity, a semi-discrete ``weak'' DG formulation  for (\ref{eq:nonlineqs}) can be given locally over an element $D^k$: find $\bm{u}\in \LRp{P^N\LRp{D^k} \times [0,\infty)}^n$ such that
\begin{align}
\int_{D^k} \LRp{\pd{\bm{u}}{t}\cdot \bm{v} - \sum_{j=1}^d\bm{f}_j(\bm{u}) \cdot \pd{\bm{v}}{x_i}} \diff{\bm{x}} 
+ \sum_{j=1}^d \int_{\partial D^k} \LRp{\bm{f}^*_j\LRp{\bm{u}^+,\bm{u}} }\cdot \bm{v} n_j  \diff{\bm{x}} = 0, \qquad \forall \bm{v}\in \LRp{P^N\LRp{D^k}}^n,
\label{eq:weakdg}
\end{align}
where the numerical flux $\bm{f}^*$ is a function of the solution $\bm{u}$ on both $D^k$ and neighboring elements.  

Unfortunately, solutions to (\ref{eq:weakdg}) do not (in general) obey a discrete version of the entropy inequality (\ref{eq:entropyineq}).  Since (\ref{eq:entropyineq}) is a generalized statement of energy stability, the lack of a discrete entropy inequality implies that the discrete solution can blow up in finite time.  The reason for this is due to the fact that, in practice, the integrals in (\ref{eq:weakdg}) are not computed exactly and are instead approximated using polynomially exact quadratures.  This is compounded by the fact that the nonlinear flux function $\bm{f}_j\LRp{\bm{u}}$ is often rational and impossible to integrate exactly using polynomial quadratures.  

While this paper focuses on curved meshes, the inexactness of quadrature leads to the loss of the chain rule and thus the loss of entropy conservation or entropy dissipation (\ref{eq:entropyineq}) even on affine meshes.  We can rewrite (\ref{eq:weakdg}) in a strong form using a discrete quadrature-based $L^2$ projection.  For polynomial approximation spaces on affine meshes, $\pd{\bm{v}}{x_i}$ is polynomial.  
Then, mapping (\ref{eq:weakdg}) back to the reference element $\hat{D}$ and using the $L^2$ projection and (\ref{eq:l2proj}), we have that
\[
\int_{D^k} \bm{f}_j(\bm{u}) \cdot \pd{\bm{v}}{x_i} \diff{\bm{x}} = \int_{\hat{D}} \Pi_N \bm{f}_j(\bm{u}) \cdot \pd{\bm{v}}{x_i} J^k\diff{\bm{x}}.
\]
Thus, integrating by parts (\ref{eq:weakdg}) recovers a ``strong'' DG formulation involving the projection operator
\begin{align}
&\int_{D^k} \LRp{\pd{\bm{u}}{t} - \sum_{j=1}^d \pd{\Pi_N \bm{f}_j(\bm{u})}{x_j}} \cdot \bm{v} \diff{\bm{x}} \nonumber\\
&+ \sum_{j=1}^d \int_{\partial D^k} \LRp{\bm{f}^*_j\LRp{\bm{u}^+,\bm{u}} - \Pi_N\bm{f}_j(\bm{u})}\cdot \bm{v} n_j  \diff{\bm{x}} = 0, \qquad \forall \bm{v}\in \LRp{P^N\LRp{D^k}}^n.  
\label{eq:strongdg}
\end{align}
From this, we see that our discrete scheme does not differentiate the nonlinear flux function $\bm{f}_j\LRp{\bm{u}}$ exactly, but instead differentiates the projection of $\Pi_N \bm{f}_j\LRp{\bm{u}}$ onto polynomials of degree $N$.  Because the $L^2$ projection operator is introduced, the chain rule no longer holds at the discrete level and step (\ref{eq:chainrule}) of the proof of entropy conservation is no longer valid.  Thus, ensuring discrete entropy stability will require a discrete formulation of the system of nonlinear conservation laws (\ref{eq:nonlineqs}) from which we can prove a discrete entropy inequality without relying on the chain rule.  

%We note that it is not strictly necessary to introduce the projection operator; however, constructing a ``strong'' DG formulation without a projection operator requires an analytical expression for derivatives of the nonlinear flux function $\bm{f}_j\LRp{\bm{u}}$, which can be complex or expensive to compute.  Moreover, because polynomial quadrature rules are typically used in practice to evaluate integrals, it is not possible to use integration by parts when the nonlinear flux function is rational due to errors in approximating integrals.  Introducing the $L^2$ projection operator resolves both issues, as the integrands involved in integration by parts are polynomials of degree $2N-1$, for which efficient quadratures exist. 

\section{Discretely entropy stable DG methods on curved meshes}
\label{sec:2}
We will first show how to construct discretely entropy stable high order accurate DG methods on curvilinear meshes, but will present this using a matrix formulation as opposed to a continuous formulation.  This is to ensure that the effects of discretization, nonlinear, and quadrature are accounted for in the proof of semi-discrete entropy stability.  We first introduce quadrature-based matrices, which we will then use to construct discretely entropy stable DG formulations.

\subsection{Basis and quadrature rules}

{We now introduce quadrature-based matrices for the $d$-dimensional reference element $\widehat{D}$, which we will use to construct matrix-vector formulations of DG methods.   Assuming $u(\hat{\bm{x}}) \in P^N\LRp{\widehat{D}}$, it can be represented in terms of the vector of coefficients $\bm{u}$ using some polynomial basis $\phi_i$ of degree $N$ and dimension $N_p$ 
\[
  u(\hat{\bm{x}}) = \sum_{j=1}^{N_p}\bm{u}_j \phi_j(\widehat{\bm{x}}), \qquad P^N\LRp{\widehat{D}} = {\rm span}\LRc{\phi_i(\widehat{x})}_{i=1}^{N_p}.
\]

We construct quadrature-based matrices based on $\phi_i$ and appropriate volume and surface quadrature rules.  The volume and surface quadrature rules are given by points and positive weights $\LRc{(\hat{\bm{x}}_i, \hat{w}_i)}_{i=1}^{N_q}$ and $\LRc{(\hat{\bm{x}}^f_i, \hat{w}^f_i)}_{i=1}^{N^f_q}$, respectively.  We make the following assumptions on the strength of these quadratures: %\note{finish!  Note that we want quadratures such that discrete IBP holds.  For example, GLL or $2N-1$ vol and $2N$ surface. }
\begin{assumption}[Integration by parts under quadrature]
  The volume quadrature rule  $\LRc{(\hat{\bm{x}}_i, \hat{w}_i)}_{i=1}^{N_q}$ is exact for polynomials of degree $2N-1$.  Additionally, 
for any $u, v \in P^N\LRp{\hat{D}}$, integration by parts 
\[
  \LRp{\pd{u}{\hat{x}_i},v}_{\hat{D}} = \LRa{u,v\hat{n}_i}_{\partial \hat{D}} - \LRp{u,\pd{v}{\hat{x}_i}}_{\hat{D}}
\]
holds when the volume and surface integrals are approximated using quadrature.
\label{ass:quad}
\end{assumption}
Assumption~\ref{ass:quad} holds, for example, for any surface quadrature rule which is exact for degree $2N$ polynomials on the boundary of the reference element $\partial \hat{D}$.

\subsection{Reference element matrices}
\label{sec:matrix}

Let $\bm{W}, \bm{W}_f$ denote diagonal matrices whose entries are volume and surface quadrature weights, respectively.  The surface quadrature weights are given by quadrature weights on reference faces, which are mapped to faces of the reference element.  We define the volume and surface quadrature interpolation matrices $\bm{V}_q$ and $\bm{V}_f$ such that
\begin{align}
\LRp{\bm{V}_q}_{ij} &= \phi_j(\hat{\bm{x}}_i), \qquad 1 \leq j \leq N_p, \qquad 1 \leq i \leq N_q, \nonumber\\
\LRp{\bm{V}_f}_{ij} &= \phi_j(\hat{\bm{x}}^f_i), \qquad 1 \leq j \leq N_p, \qquad 1 \leq i \leq N^f_q,\label{eq:qinterp}
\end{align}
which map coefficients $\bm{u}$ to evaluations of $u$ at volume and surface quadrature points.  

Next, let ${\bm{D}}_i$ denote the differentiation matrix with respect to the $i$th coordinate, defined implicitly through the relations
\[
u(\hat{\bm{x}}) = \sum_{j=1}^{N_p} \bm{u}_j \phi_j(\hat{\bm{x}}), \qquad \pd{u}{\hat{\bm{x}}_i} = \sum_{j=1}^{N_p} \LRp{{\bm{D}}_i \bm{u}}_j\phi_j(\hat{\bm{x}}).
\]
The matrix ${\bm{D}}_i$ maps basis coefficients of some polynomial $u \in P^N\LRp{\hat{D}}$ to coefficients of its $i$th derivative with respect to the reference coordinate $\hat{\bm{x}}$, and is sometimes referred to as a ``modal'' differentiation matrix (with respect to a general non-nodal ``modal'' basis \cite{hicken2016multidimensional}).  

Using the volume quadrature interpolation matrix $\bm{V}_q$, we can compute a quadrature-based mass matrix $\bm{M}$ by evaluating $L^2$ inner products of different basis functions using quadrature
\[
  \bm{M} = \bm{V}_q^T\bm{W}\bm{V}_q, \qquad \bm{M}_{ij} = \sum_{k=1}^{N_q} \hat{w}_k \phi_j(\hat{\bm{x}}_k)\phi_i(\hat{\bm{x}}_k) \approx \int_{\hat{D}}\phi_j\phi_i \diff{\hat{\bm{x}}} = \LRp{\phi_j,\phi_i}_{\hat{D}}.
\]
The approximation in the formula for the mass matrix becomes an equality if the volume quadrature rule is exact for polynomials of degree $2N$.  The mass matrix is symmetric and positive definite under Assumption~\ref{ass:quad}; however, we do not make any distinctions between diagonal and dense (lumped) mass matrices in this work.

The mass matrix appears in the discretization of $L^2$ projection (\ref{eq:l2proj}) and lift operators (\ref{eq:lift}) using quadrature.  The result are quadrature-based $L^2$ projection and lift operators $\bm{P}_q, \bm{L}_q$, 
\begin{equation}
\bm{P}_q = \bm{M}^{-1}\bm{V}_q^T\bm{W}, \qquad \bm{L}_q = \bm{M}^{-1}\bm{V}_f^T \bm{W}_f,
\label{eq:projlift}
\end{equation}
which are discretizations of the continuous $L^2$ projection operator $\Pi_N$ and continuous lift operator $L$.  The matrix $\bm{P}_q$ maps a function (in terms of its evaluation at quadrature points) to coefficients of the $L^2$ projection in the basis $\phi_j(x)$, while the matrix $\bm{L}_q$ ``lifts'' a function (evaluated at surface quadrature points) from the boundary of an element to coefficients of a basis defined in the interior of the element.  

Finally, we introduce quadrature-based operators $\bm{D}_N^i$ which will be used to construct discretizations of our nonlinear conservation laws.  This operator was introduced in \cite{chan2017discretely} as a ``decoupled summation-by-parts'' operator
\begin{equation}
\bm{D}_N^i =\LRs{
\begin{array}{cc}
\bm{V}_q\bm{D}_i \bm{P}_q - \frac{1}{2}\bm{V}_q\bm{L}_q \diag{\hat{\bm{n}}_i\circ\bm{\hat{J}}_f}\bm{V}_f\bm{P}_q & \frac{1}{2}\bm{V}_q\bm{L}_q  \diag{\hat{\bm{n}}_i\circ\bm{\hat{J}}_f}\\
- \frac{1}{2}\diag{\hat{\bm{n}}_i\circ\bm{\hat{J}}_f} \bm{V}_f\bm{P}_q &  \frac{1}{2}\diag{\hat{\bm{n}}_i\circ\bm{\hat{J}}_f}
%=======
%  \bm{V}_q\bm{D}_i \bm{P}_q - \frac{1}{2}\bm{V}_q\bm{L}_q \diag{\hat{\bm{n}}_i\circ\bm{\hat{J}}_f}\bm{V}_f\bm{P}_q & \frac{1}{2}\bm{V}_q\bm{L}_q  \diag{\hat{\bm{n}}_i\circ\bm{\hat{J}}_f}\\
%  - \frac{1}{2}\diag{\hat{\bm{n}}_i\circ\bm{\hat{J}}_f} \bm{V}_f\bm{P}_q &  \frac{1}{2}\diag{\hat{\bm{n}}_i\circ\bm{\hat{J}}_f}
%>>>>>>> cd72d858ec79933e123c7bf1361e488006d5956d
\end{array}
}
\label{eq:decoupledsbp}
\end{equation}
where $\hat{\bm{n}}_i$ is the vector containing values of the $i$th component of the unit normal on the surface of the reference element $\hat{D}$, and $\bm{\hat{J}}_f$ is the vector containing values of the face Jacobian factor $\hat{J}_f$ which result from mapping a face of $\hat{D}$ to a reference face.  Here $\hat{\bm{n}}_i\circ\bm{\hat{J}}_f$ is the Hadamard product (i.e., the entrywise product) of the vectors $\hat{\bm{n}}_i$ and $\bm{\hat{J}}_f$.  When combined with projection and lifting matrices, $\bm{D}_N^i$ produces a high order approximation of non-conservative products.  Let $\bm{f},\bm{g}$ denote vectors containing the evaluation of functions $f(\bm{x}),g(\bm{x})$ at both volume and surface quadrature points
\[
\LRs{\begin{array}{cc}\bm{P}_q & \bm{L}_q\end{array}} \diag{\bm{f}}\bm{D}_N^i \bm{g} \approx f\pd{g}{\hat{x}_i}.
\]
It was shown in \cite{chan2017discretely} that the matrix $\bm{D}_N^i$ satisfies several key properties.  First, it can be observed that $\bm{D}_N^i\bm{1} = 0$, where $\bm{1}$ is the vector of all ones.  Second, $\bm{D}_N^i$ satisfies a summation-by-parts property.  Let $\bm{Q}_N^i$ be the scaling of $\bm{D}_N^i$ by the diagonal matrix of volume and surface quadrature weights 
\[
\bm{Q}_N^i = \bm{W}_N \bm{D}_N^i, \qquad \bm{W}_N = \LRp{\begin{array}{cc}
\bm{W} &\\
& \bm{W}_f 
\end{array}}.
\]
Then, $\bm{Q}_N^i$ satisfies the following discrete analogue of integration by parts 
\begin{equation}
\bm{Q}_N^i + \LRp{\bm{Q}_N^i}^T = \bm{B}^i_N, \qquad \bm{B}_N = \LRp{\begin{array}{cc}
\bm{0}&\\
      & \bm{W}_f \diag{\hat{\bm{n}}_i\circ\hat{\bm{J}}_f}
\end{array}}.
\label{eq:sbpprop1}
\end{equation}
The matrix $\bm{D}_N^i$ reduces to polynomial differentiation when applied to polynomials, in the sense that
\begin{align}
\bm{D}_N^i \LRs{\begin{array}{c}
\bm{V}_q\\
\bm{V}_f
\end{array}} = \LRs{\begin{array}{c}
\bm{V}_q\bm{D}_i\\
\bm{0}
\end{array}}.
\label{eq:dnvqvf}
\end{align}

%, and $\hat{J}_f$ denotes the Jacobian of the mapping from a face of $\hat{D}$ to a reference face (for example, the faces of a triangle are mapped back to the bi-unit interval $[-1,1]$).  

\subsection{Matrices on curved physical elements}
\label{sec:curv}

The key difference between curvilinear and affine meshes is that geometric terms now vary spatially over each element.  In practice, derivatives are computed over the reference element and mapped to the physical element $D^k$ through a change of variables formula
\[
J^k \pd{u}{x_i} = \sum_{j=1}^d G^k_{ij}\pd{u}{\hat{x}_j}, \qquad G^k_{ij} = J^k\pd{\hat{x}_j}{x_i},
\]
where we have defined the elements of the matrix $\bm{G}^k$ as the derivatives of the reference coordinates $\hat{x}_j$ with respect to the physical coordinates $x_i$ on $D^k$ times the Jacobian of the transformation from reference to physical coordinates $J^k$.  We denote evaluations of $G^k_{ij}$ at both volume and surface quadrature points as the vector $\bm{G}^k_{ij}$.

We assume in this work that the mesh is stationary.  It can be shown at the continuous level that, for any differentiable and invertible mapping, the quantity $\bm{G}^k$ satisfies a geometric conservation law (GCL) \cite{kopriva2006metric, thomas1979geometric}
\begin{equation}
\sum_{j=1}^d\pd{}{\hat{x}_j}G^k_{ij} = 0,
\label{eq:gcl}
\end{equation}
or that $\hat{\Grad}\cdot \bm{G}^k = 0$.  Using (\ref{eq:gcl}), the scaled physical derivative $J^k\pd{u}{x_i}$ can be computed via
\begin{equation}
J^k\pd{u}{x_i} = \frac{1}{2}\sum_{j=1}^d \LRp{G^k_{ij}\pd{u}{\hat{x}_j} + \pd{\LRp{G^k_{ij}u}}{\hat{x}_j}}.
\label{eq:splitderiv}
\end{equation}

We will require the following assumptions on the mesh, as well as the geometric terms and outward normal vectors:
\begin{assumption}[Mesh assumptions]
We assume that the mesh is quasi-uniform.  The mesh is also assumed to be watertight, such that normals are consistent across neighboring elements as follows: for a shared face $f$ between $D^k$ and $D^{k,+}$, the scaled outward normal vectors for each element are equal and opposite at all points such that 
\begin{align}
\bm{n}J^k_f = -\bm{n}^+J^{k,+}_f.
\label{eq:normalsign}
\end{align}
We also assume that the scaled matrix of geometric terms transforms scaled reference normal vectors to scaled physical normals, such that
\begin{align}
\sum_{j=1}^d G^k_{ij} \hat{n}_j\hat{J}_f = n_i J^k_f, \qquad \LRp{\sum_{j=1}^d \LRp{\bm{G}^k_{ij}}_f \circ\LRp{\hat{\bm{n}}_j\circ\hat{\bm{J}}_f}}
= \LRp{\bm{n}_i\circ\bm{J}^k_f},
\label{eq:normalconsistency}
\end{align}
where $\bm{n}_j$ and $\bm{J}^k_f$ are vectors containing evaluations of the physical unit normals and face Jacobian factors for $D^k$ at surface quadrature points, respectively.
Likewise, $\LRp{\bm{G}^k_{ij}}_f$ is a vector containing evaluations of $G^k_{ij}$ at the surface quadrature points.
\label{ass:norm}
\end{assumption}
The properties (\ref{eq:normalsign}) and (\ref{eq:normalconsistency}) hold at the continuous level for a watertight mesh \cite{ciarlet1978finite}, and thus at all points where the geometric terms are computed exactly.  However, we will also consider cases where the geometric terms $G^k_{ij}$ are modified to enforce a discrete form of (\ref{eq:gcl}); in these situations, it will be important to ensure that (\ref{eq:normalconsistency}) holds after such modifications.  

Similar to what is done to stabilize finite difference discretizations~\cite{nordstrom2006cfdf, gassner2016split}, we define physical differentiation matrices based on the approximation of (\ref{eq:splitderiv}).  Define $\bm{D}^i_k$ as
\[
  \bm{D}^i_k = \frac12 \sum_{j=1}^d \left(\diag{\bm{G}^k_{ij}}\bm{D}^j_N + \bm{D}^j_N\diag{\bm{G}^k_{ij}}\right).
\]
Using properties of the Hadamard product \cite{horn2012matrix}, we can rewrite $\bm{D}^i_k$ as 
\begin{equation}
\bm{D}^i_k = \sum_{j=1}^d \LRp{\bm{D}^j_N \circ \avg{\bm{G}^k_{ij}}}, \qquad \avg{\bm{G}^k_{ij}}_{mn} = \frac{1}{2}\LRp{\LRp{\bm{G}^k_{ij}}_m + \LRp{\bm{G}^k_{ij}}_n},
\label{eq:dik}
\end{equation}
where $\avg{\bm{G}^k_{ij}}$ denotes the matrix of averages between each of the entries of $\bm{G}^k_{ij}$.  From Assumption~\ref{ass:norm} and (\ref{eq:dik}), it is straightforward to show that (because $\avg{\bm{G}^k_{ij}}$ is symmetric) $\bm{Q}^i_k = \bm{W}_N\bm{D}^i_k$ also satisfies a summation-by-parts property
\begin{equation}
\bm{Q}^i_k + \LRp{\bm{Q}^i_k}^T = \bm{B}^i_k, \qquad \bm{B}^i_k = 
\LRp{\begin{array}{cc}
\bm{0}&\\
& \bm{W}_f \diag{\bm{n}_i\circ\bm{J}^k_f}
\end{array}}.
\label{eq:sbpk}
\end{equation}

Curvilinear mappings also imply that integrals over each physical element $D^k$ are no longer simple scalings of integrals over $\hat{D}$.  The $L^2$ projection of $u\in L^2\LRp{D^k}$ over a curvilinear element $D^k$ is defined through 
\begin{equation}
\LRp{\Pi^k_N u,v}_{D^k} = \LRp{u,v}_{D^k}, \qquad \forall v\in P^N\LRp{\hat{D}}.
\label{eq:l2curv}
\end{equation}
Mapping integrals to the reference element $\hat{D}$ yields
\begin{equation}
\LRp{\Pi^k_N u,v J^k}_{\hat{D}} = \LRp{u,vJ^k}_{\hat{D}}, \qquad \forall v\in P^N\LRp{\hat{D}}.
\label{eq:l2curvmap}
\end{equation}
For affine elements, $J^k$ is constant and can be cancelled.  Thus, the $L^2$ projection over affine elements is equivalent to simply taking the $L^2$ projection of a function over the reference element.  However, for curved elements, $J$ acts as a spatially varying weight within the $L^2$ inner product.  

Discretizing (\ref{eq:l2curvmap}) requires a weighted mass matrix.  We define a curved mass matrix over an element $D^k$ by weighting the discrete $L^2$ norm with values of $J$ at quadrature points
\begin{equation}
\bm{M}^k = \bm{V}_q^T \bm{W}\diag{\bm{J}^k}\bm{V}_q,
\label{eq:curvedmass}
\end{equation}
where $\bm{J}^k$ is a vector containing evaluation of the physical Jacobian factors for $D^k$ at volume quadrature points.
Then, curvilinear $L^2$ projection and lift matrices can be defined in a manner analogous to (\ref{eq:projlift})
\begin{equation}
\bm{P}^k_q = \LRp{\bm{M}^k}^{-1}\bm{V}_q^T\bm{W}\diag{\bm{J}^k}, \qquad \bm{L}^k_q = \LRp{\bm{M}^k}^{-1}\bm{V}_f^T\bm{W}_f\diag{\bm{J}^k_f}.
\label{eq:projliftcurved}
\end{equation}
These matrices are distinct from element to element, reflecting the fact that problem (\ref{eq:l2curvmap}) is distinct from element to element.  

\subsection{A discretely entropy stable DG formulation on curved meshes}

Given the matrices in Section~\ref{sec:curv}, we can now define a local entropy stable DG formulation on an element $D^k$.  Here, we seek an approximation solution $\bm{u}_N(\bm{x},t)$ to (\ref{eq:nonlineqs}), which is represented using vector-valued coefficients $\bm{u}_h(t)$ such that
\[
\bm{u}_N(\bm{x},t) = \sum_{j=1}^{N_p} \LRp{\bm{u}_h(t)}_j \phi_j(\bm{x}), \qquad \LRp{\bm{u}_h(t)}_j \in \mathbb{R}^n.
\]
Since the coefficients are vector valued, we assume that all matrices act component-wise on $\bm{u}_h$ in the Kronecker product sense.  

We first define the numerical fluxes $\bm{f}_{i,S}\LRp{\bm{u}_L,\bm{u}_R}$ as the bivariate function of ``left'' and ``right'' conservative variable states $\bm{u}_L, \bm{u}_R$.  % Such a numerical flux is referred to as entropy conservative (or entropy stable) if it satisfies the following conditions:
\begin{definition}
The numerical flux $\bm{f}_{i,S}\LRp{\bm{u}_L,\bm{u}_R}$ is entropy conservative (or entropy stable) if it satisfies the following conditions:
\begin{enumerate}
\item $\bm{f}_{i,S}\LRp{\bm{u}_L,\bm{u}_R} = \bm{f}_{i,S}\LRp{\bm{u}_R,\bm{u}_L}$ (symmetry).
\item $\bm{f}_{i,S}\LRp{\bm{u},\bm{u}} = \bm{f}_i\LRp{\bm{u}}$ (consistency).
\item $\bm{f}_{i,S}$ is referred to as entropy conservative if it satisfies conditions 1, 2, and  
\[
  \LRp{\bm{v}_L-\bm{v}_R}^T\bm{f}_{i,S}\LRp{\bm{u}_L,\bm{u}_R} = \psi_i\LRp{\bm{u}_L}-\psi_i\LRp{\bm{u}_R}.
\]
%\item $\bm{f}_{i,S}$ is referred to as entropy stable if it satisfies conditions 1, 2, and  
%\[
%\LRp{\bm{v}_L-\bm{v}_R}^T\bm{f}_{i,S}\LRp{\bm{u}_L,\bm{u}_R} \leq \psi_i\LRp{\bm{u}_L}-\psi_i\LRp{\bm{u}_R}.
%\]
\end{enumerate}
\label{def:entropyflux}
\end{definition}

We now introduce the $L^2$ projection of the entropy variables $\bm{v}_h$ and the entropy-projected conservative variables $\tilde{\bm{u}}$ 
\begin{align}
\bm{u}_q = \bm{V}_q \bm{u}_h, \qquad \bm{v}_h = \bm{P}^k_q \bm{v}\LRp{\bm{u}_q}, \qquad 
\tilde{\bm{v}} = \LRs{\begin{array}{c}
\tilde{\bm{v}}_q\\
\tilde{\bm{v}}_f
\end{array}} = \LRs{\begin{array}{c}
\bm{V}_q\\
\bm{V}_f
\end{array}}\bm{v}_h, \qquad \tilde{\bm{u}} = \LRs{\begin{array}{c}
\tilde{\bm{u}}_q\\
\tilde{\bm{u}}_f
\end{array}} = \bm{u}\LRp{\tilde{\bm{v}}}.
\label{eq:evars1}
\end{align}
In (\ref{eq:evars1}), the entropy-projected conservative variables $\tilde{\bm{u}}$ denote the evaluation of the conservative variables in terms of the projected entropy variables at volume and face quadrature points.  We note that, under an appropriate choice of quadrature on quadrilaterals and hexahedra, this approach is equivalent to the approach taken in \cite{parsani2016entropy}, where the entropy variables are evaluated at Gauss nodes, then interpolated to a different set of nodes and used to compute the nonlinear fluxes.  

We now introduce a semi-discrete DG formulation for $\bm{u}_h$
\begin{align}
&\td{\bm{u}_h}{t} + \LRs{\begin{array}{cc}\bm{P}^k_q & \bm{L}^k_q\end{array}}
  \sum_{j=1}^d \LRp{2\bm{D}^j_k \circ \bm{F}_{j,S}}\bm{1} + \sum_{j=1}^d \bm{L}^k_q \diag{\bm{n}_j}\LRp{\bm{f}_j^* - \bm{f}_j(\tilde{\bm{u}}_f)} = 0,\label{eq:dgform1}\\
  &\LRp{\bm{F}_{j,S}}_{mn} = \bm{f}_{j,S}\LRp{\LRp{\tilde{\bm{u}}}_m,\LRp{\tilde{\bm{u}}}_n}, \qquad 1 \leq m,n \leq N_q + N^f_q,\nonumber\\
  &\bm{f}_j^* = \bm{f}_{j,S}(\tilde{\bm{u}}_f^+,\tilde{\bm{u}}_f) \text{ on interior faces},\nonumber
\end{align}
where $\tilde{\bm{u}}^+$ denotes the values of the entropy-projected conservative variables on the neighboring element across each face of $D^k$, and $\bm{f}_j^*$ on the boundary denotes the $j$th component of some numerical flux through which boundary conditions are imposed.  Note that the face/surface Jacobian factors $\bm{J}^k_f$ are incorporated into the definition of $\bm{L}^k_q$.  

Define the diagonal boundary quadrature matrix $\bm{W}_{\partial \Omega}$ such that
\[
\LRp{\bm{W}_{\partial \Omega} }_{ii} = \begin{cases}
  \bm{W}_f, & \text{if $\hat{\bm{x}}^f_i$ is on the $\partial \Omega$}\\
0, & \text{otherwise}. 
\end{cases}
\]
We have the following semi-discrete statement of entropy conservation:
\begin{theorem}
  Let $\bm{f}_{i,S}$ be an entropy conservative flux from Definition~\ref{def:entropyflux} and assume that $\bm{Q}^j_k\bm{1} = 0$ for $j = 1,\ldots,d$ over each element $D^k$.  Then, (\ref{eq:dgform1}) is entropy conservative in the sense that
\[
  \sum_k \bm{1}^T\diag{\bm{J}^k}\bm{W}\td{U(\bm{u}_q)}{t} = \sum_k \sum_{j=1}^d \bm{1}^T\diag{\bm{n}_j\circ\bm{J}^k_f}\bm{W}_{\partial \Omega} \LRp{\psi_j\LRp{\tilde{\bm{u}}_f}-\tilde{\bm{v}}_f^T\bm{f}_j^*}.
\]
\label{thm:stab1}
\end{theorem}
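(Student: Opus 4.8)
The plan is to test the scheme against the projected entropy variables $\bm{v}_h$ and then exploit the summation-by-parts identity (\ref{eq:sbpk}) together with the defining properties of the flux $\bm{f}_{j,S}$, mirroring the affine argument of \cite{chan2017discretely} while carrying the curvilinear geometric factors. First I would rewrite the left-hand side. Pointwise in time the chain rule gives $\td{U(\bm{u}_q)}{t} = \bm{v}(\bm{u}_q)^T\td{\bm{u}_q}{t}$, and since $\bm{u}_q = \bm{V}_q\bm{u}_h$ the total-entropy rate equals $\sum_k\bm{v}(\bm{u}_q)^T\bm{W}\diag{\bm{J}^k}\bm{V}_q\td{\bm{u}_h}{t}$. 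Because $\bm{v}_h = \bm{P}^k_q\bm{v}(\bm{u}_q)$ with $\bm{P}^k_q = \LRp{\bm{M}^k}^{-1}\bm{V}_q^T\bm{W}\diag{\bm{J}^k}$ and $\bm{M}^k$ symmetric, this collapses to $\sum_k\bm{v}_h^T\bm{M}^k\td{\bm{u}_h}{t}$. The decisive structural point is that forming $\tilde{\bm{u}} = \bm{u}(\tilde{\bm{v}})$ from the projected entropy variables makes $\tilde{\bm{v}}$ the exact entropy variables of the evaluated states $\tilde{\bm{u}}$, so the entropy-conservation relation of Definition~\ref{def:entropyflux} can be applied node-by-node to $\bm{F}_{j,S}$.

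Next I would substitute $\bm{M}^k\td{\bm{u}_h}{t}$ from (\ref{eq:dgform1}). Using $\bm{M}^k\bm{P}^k_q = \bm{V}_q^T\bm{W}\diag{\bm{J}^k}$ and $\bm{M}^k\bm{L}^k_q = \bm{V}_f^T\bm{W}_f\diag{\bm{J}^k_f}$, and absorbing the diagonal quadrature weights into the Hadamard products so that $\bm{Q}^j_k = \bm{W}_N\bm{D}^j_k$ surfaces, the volume contribution reduces to $\sum_j\tilde{\bm{v}}^T2\LRp{\bm{Q}^j_k\circ\bm{F}_{j,S}}\bm{1}$ and the interface contribution to $\sum_j\tilde{\bm{v}}_f^T\bm{W}_f\diag{\bm{n}_j\circ\bm{J}^k_f}\LRp{\bm{f}_j^*-\bm{f}_j(\tilde{\bm{u}}_f)}$.

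The heart of the proof is the telescoping of each $\tilde{\bm{v}}^T2\LRp{\bm{Q}^j_k\circ\bm{F}_{j,S}}\bm{1} = 2\sum_{m,n}\bm{Q}^j_{k,mn}\tilde{\bm{v}}_m^T\bm{f}_{j,S}(\tilde{\bm{u}}_m,\tilde{\bm{u}}_n)$. I would decompose $\tilde{\bm{v}}_m = \frac{1}{2}(\tilde{\bm{v}}_m+\tilde{\bm{v}}_n)+\frac{1}{2}(\tilde{\bm{v}}_m-\tilde{\bm{v}}_n)$. In the symmetric half, symmetry of $\bm{f}_{j,S}$ (property~1) lets me replace $\bm{Q}^j_k$ by $\frac{1}{2}\LRp{\bm{Q}^j_k+\LRp{\bm{Q}^j_k}^T} = \frac{1}{2}\bm{B}^j_k$ via (\ref{eq:sbpk}); since $\bm{B}^j_k$ is the diagonal face block, only $m=n$ survives and consistency (property~2) yields $\tilde{\bm{v}}_f^T\bm{W}_f\diag{\bm{n}_j\circ\bm{J}^k_f}\bm{f}_j(\tilde{\bm{u}}_f)$. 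In the skew half, the entropy-conservation identity (property~3) turns the summand into $\bm{Q}^j_{k,mn}\LRp{\psi_j(\tilde{\bm{u}}_m)-\psi_j(\tilde{\bm{u}}_n)}$, which telescopes to $\sum_m\LRp{\bm{Q}^j_k\bm{1}}_m\psi_j(\tilde{\bm{u}}_m)-\sum_n\LRp{\bm{1}^T\bm{Q}^j_k}_n\psi_j(\tilde{\bm{u}}_n)$. This is precisely where the hypothesis $\bm{Q}^j_k\bm{1}=0$ is indispensable: it kills the first sum, while (\ref{eq:sbpk}) gives $\bm{1}^T\bm{Q}^j_k = \bm{1}^T\bm{B}^j_k$, leaving only the face term $-\bm{1}^T\bm{W}_f\diag{\bm{n}_j\circ\bm{J}^k_f}\psi_j(\tilde{\bm{u}}_f)$. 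Each volume contraction therefore reduces to a pure surface quantity.

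Finally I would assemble. Adding the scheme's interface term cancels the $\bm{f}_j(\tilde{\bm{u}}_f)$ pieces, so each element contributes $\sum_j\bm{1}^T\diag{\bm{n}_j\circ\bm{J}^k_f}\bm{W}_f\LRp{\psi_j(\tilde{\bm{u}}_f)-\tilde{\bm{v}}_f^T\bm{f}_j^*}$. Summing over $k$, I would show interior faces cancel in pairs: on a shared face the scaled normals are equal and opposite by the watertightness relation (\ref{eq:normalsign}), the entropy-conservative flux is single-valued by symmetry (property~1), and property~3 makes $\LRp{\tilde{\bm{v}}_f-\tilde{\bm{v}}_f^+}^T\bm{f}_j^* = \psi_j(\tilde{\bm{u}}_f)-\psi_j(\tilde{\bm{u}}_f^+)$, so the paired contributions sum to zero; only $\partial\Omega$ faces survive, giving the right-hand side with $\bm{W}_{\partial\Omega}$. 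The step I expect to be most delicate is the flux-differencing telescoping with its geometric bookkeeping — ensuring the weighted mass matrix cancels cleanly against $\bm{P}^k_q,\bm{L}^k_q$ so that exactly $\bm{Q}^j_k$ and its boundary matrix $\bm{B}^j_k$ (with the correct scaled normals $\bm{n}_j\circ\bm{J}^k_f$) appear, and recognizing $\bm{Q}^j_k\bm{1}=0$ as the exact ingredient that discards the entropy-potential volume terms. The interior-face cancellation is conceptually routine but hinges on Assumption~\ref{ass:norm}.
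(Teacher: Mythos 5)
Your proposal is correct and follows essentially the same route as the paper: the paper's proof of Theorem~\ref{thm:stab1} is a one-line deferral to the affine argument of \cite{chan2017discretely} once $\bm{Q}^j_k\bm{1}=0$ and the curvilinear SBP property (\ref{eq:sbpk}) are in hand, and your write-up simply reconstitutes that argument in full — testing with $\bm{v}_h$, collapsing the time term via the adjoint property of $\bm{P}^k_q$ (the same computation the paper spells out for the weight-adjusted case in Theorem~\ref{thm:stab2}), the symmetric/skew decomposition of the flux-differencing term using properties 1--3 of Definition~\ref{def:entropyflux}, $\bm{Q}^j_k\bm{1}=0$ eliminating the volume entropy-potential terms, and interior-face cancellation via (\ref{eq:normalsign}) and the entropy-conservation identity. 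You also correctly identify the two load-bearing ingredients the paper highlights — the role of the entropy-projected variables $\tilde{\bm{u}} = \bm{u}(\tilde{\bm{v}})$ and the hypothesis $\bm{Q}^j_k\bm{1}=0$ — so there is nothing to add.
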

\begin{proof}
Under the assumption that $\bm{Q}^j_k\bm{1} = 0$ for $i = 1,\ldots,d$ over each element and (\ref{eq:sbpk}), the proof of entropy conservation is identical to that of \cite{chan2017discretely}.  
%\note{Expand out!  Do spatial term only, punt for time term.  }
%To prove entropy conservation, we multiply on both sides by $\bm{M}_k$ 
%\begin{align}
%\bm{M}_k\td{\bm{u}_h}{t} + \LRs{\begin{array}{c}\bm{V}_q \\ \bm{V}_f\end{array}}^T
%\sum_{j=1}^d \LRp{\bm{Q}^j_k \circ \bm{F}_{j,S}}\bm{1} + \sum_{j=1}^d \bm{V}_f^T \bm{W}_f\diag{\bm{n}_j\bm{J}^k_f}\LRp{\bm{f}_j^* - \bm{f}_j(\tilde{\bm{u}}_f)} = 0.
%\end{align}
%We now test on both sides by $\bm{v}_h^T$ and note that $\bm{v}_h^T\bm{M} = \bm{v}(\bm{u}_q)^T \bm{W} \bm{J}^k \bm{V}_q$.  For the time term, we assuming continuity in time and use the chain rule
%\[
%\bm{v}(\bm{u}_q)^T \bm{W} \bm{J}^k \td{\bm{V}_q\bm{u}_h}{t} = \bm{1}^T \bm{W} \bm{J}^k \bm{v}(\bm{u}_q)^T\td{\bm{u}_q}{t} = 
%\bm{1}^T \bm{W} \bm{J}^k \td{U(\bm{u}_q)}{t},
%\]
%where we have used that $\bm{W}$ is diagonal, that $\bm{v}(\bm{u}) = \pd{U(\bm{u})}{\bm{u}}$, and the chain rule in time.  The right hand side terms
%\[
%\sum_{j=1}^d \LRp{\bm{Q}^j_k \circ \bm{F}_{j,S}}\bm{1} + \sum_{j=1}^d \bm{V}_f^T \bm{W}_f\diag{\bm{n}_j\bm{J}^k_f}\LRp{\bm{f}_j^* - \bm{f}_j(\tilde{\bm{u}}_f)} = 0.
%\]
%\note{Finish proof}
\end{proof}
An entropy stable scheme can be constructed by adding an entropy-dissipating penalty term, such as a Lax-Friedrichs penalization or the matrix dissipation terms introduced in \cite{chandrashekar2013kinetic, winters2017uniquely}.  For example, Lax-Friedrichs penalization can be incorporated by replacing the flux term with
\[
\bm{L}^k_q \diag{\bm{n}_j} \LRp{\bm{f}_j^*-\bm{f}(\bm{u})} \Longrightarrow \bm{L}^k_q\LRp{\diag{\bm{n}_j} \LRp{\bm{f}_j^*- \bm{f}(\bm{u})} - \frac{\lambda}{2}\jump{\tilde{\bm{u}}_f}},
\]
where $\lambda$ is an estimate of the maximum eigenvalue of $\pd{\bm{f}(\bm{u})}{\bm{u}}$ \cite{chen2017entropy, chan2017discretely}.

%\note{maybe add more here?}

\section{Discretely stable and low storage DG methods on curved meshes}
\label{sec:3}
A disadvantage of the formulation (\ref{eq:dgform1}) is high storage costs, especially at high orders of approximation.  While the matrices $\bm{Q}^i_k$ can be applied to a vector without needing to explicitly store the matrix, the projection and lifting matrices (\ref{eq:projliftcurved}) differ from element to element, necessitating either explicit pre-computation and storage or the assembly and inversion of a weighted mass matrix for each right hand side evaluation.  The latter option is computationally expensive, while the former option increases storage costs.  This increase in storage can result in suboptimal performance on modern computational architectures \cite{chan2017weight}, due to the increasing cost of memory operations and data movement compared to arithmetic operations.  

In this section, we present a discretely entropy stable scheme which avoids this high storage cost through the use of a low-storage weight-adjusted approximation to the inverse of a weighted mass matrix.  To ensure a discrete entropy conservation or a discrete entropy inequality, we also modify the formulation (\ref{eq:dgform1}) to take into account the use of a weight-adjusted mass matrix.

\subsection{A weight-adjusted approximation to the curvilinear mass matrix}

%Recall that, by mapping back to the reference element $\hat{D}$, the $L^2$ projection $\Pi_N^k$ on a curved element $D^k$ is defined as
%\begin{equation}
%\LRp{J \Pi_N^k u,v}_{\hat{D}} = \LRp{uJ,v}_{\hat{D}}, \qquad \forall v\in P^N\LRp{\hat{D}}.
%\label{eq:l2projcurv}
%\end{equation}
The presence of the weighted $L^2$ inner product $\LRp{J^k \Pi_N^k u,v}_{\hat{D}}$ in (\ref{eq:l2curvmap}) results in the presence of a weighted mass matrix.  Because the weight $J^k$ varies spatially over each element, the inverse of a weighted mass matrix is no longer a scaling of the inverse reference mass matrix.  The motivation for the weight-adjusted mass matrix is to replace the inversion of weighted mass matrices over each element with the application of inverse reference mass matrices and quadrature-based operations involving the spatially varying weights $J^k$ \cite{chan2016weight1, chan2016weight2}.  

To define a weight-adjusted approximation to the curvilinear $L^2$ inner product, we first define the operator $T_{w}^{-1}: L^2\rightarrow P^N$ as follows
\begin{equation}
  \LRp{wT_{w}^{-1} u,v}_{\hat{D}} = \LRp{ u,v}_{\hat{D}}, \qquad \forall v\in P^N\LRp{\hat{D}}.
\label{eq:wadgTw}
\end{equation}
Roughly speaking, $T_{w} u$ approximates $u/w$.  Thus, taking $w = 1/J^k$ provides an approximation of the curvilinear $L^2$ inner product
\begin{equation*}
\LRp{J^k u,v}_{\hat{D}} \approx \LRp{T_{1/J^k}^{-1} u,v}_{\hat{D}}.
\end{equation*}

Computing $T_{1/J^k}^{-1}u$ requires solving (\ref{eq:wadgTw}).  Let $u \in P^N\LRp{\hat{D}}$, and let $\bm{u}_J$ denote coefficients for the polynomial $T_{1/J^k}^{-1}u$.  This results in the following matrix system
\[
\bm{M}_{1/J^k}\bm{u}_{J} = \bm{M}\bm{u}, \qquad {\bm{M}_{1/J^k}} = \bm{V}_q^T \bm{W}\diag{1/\bm{J}^k}\bm{V}_q,%\LRp{\frac{1}{J^k}\phi_i,\phi_j}_{\hat{D}},
\]
which implies that, when restricted to polynomials, the matrix form of $T_{1/J^k}^{-1}$ is $\bm{M}_{1/J^k}^{-1}\bm{M}$.  Then, the weight-adjusted mass matrix is the Gram matrix with respect to the weight-adjusted inner product $\LRp{T_{1/J^k}^{-1} u,v}_{\hat{D}}$, such that
\[
\bm{M}^k \approx \bm{M}\bm{M}_{1/J^k}^{-1}\bm{M}, \qquad \LRp{\bm{M}^k}^{-1} \approx \bm{M}^{-1}\bm{M}_{1/J^k}\bm{M}^{-1}.
\]
The inverse of the weight-adjusted mass matrix can be applied in a matrix-free fashion by using quadrature to form $\bm{M}_{1/J^k} $.  This requires storage of the inverse reference mass matrix and the values of $\bm{J}^k$ at quadrature points.  Assuming that the number of quadrature points scales as $O(N^d)$ in $d$ dimensions, this yields a storage cost of $O(N^d)$ per-element compared to an $O(N^{2d})$ per element storage cost required for the storage of inverse weighted mass matrices $\LRp{\bm{M}^k}^{-1}$.  This application of the weight-adjusted mass matrix is typically applied using the $L^2$ projection matrix $\bm{P}_q$ as follows
\[
\bm{M}^{-1}\bm{M}_{1/J^k}\bm{M}^{-1} = \bm{P}_q \diag{1/\bm{J}^k} \bm{V}_q \bm{M}^{-1}.
\]
When evaluating the right hand side of a semi-discrete formulation such as (\ref{eq:dgform1}), the inverse mass matrix is typically merged into operations on the right hand side, such that the main work in applying the weight-adjusted mass matrix consists of applying the interpolation matrix $\bm{V}_q$, scaling by pointwise values of $1/\bm{J}^k$ at quadrature points, and multiplying by the $L^2$ projection matrix $\bm{P}_q$.

\subsection{A discretely entropy stable low storage DG formulation on curved meshes}

Given the weight-adjusted inverse mass matrix, we can also define a weight-adjusted version of the $L^2$ projection over a curved element $D^k$.  We refer to this operator as $\tilde{\Pi}^k_N: L^2\rightarrow P^N$, which satisfies
\[
\LRp{T^{-1}_{1/J^k}\tilde{\Pi}^k_N u,v}_{\hat{D}} = \LRp{uJ^k,v}_{\hat{D}}, \qquad \forall v\in P^N\LRp{\hat{D}}.
\]
It was shown in \cite{chan2016weight2} that $\tilde{\Pi}^k_N $ is given explicitly by
\begin{equation}
\tilde{\Pi}^k_N u = \Pi_N\LRp{\frac{1}{J^k}\Pi_N\LRp{uJ^k}},
\label{eq:wadgprojop}
\end{equation}
where $\Pi_N$ is the $L^2$ projection operator on the reference element $\hat{D}$.  We can discretize $\tilde{\Pi}^k_N$ using quadrature to yield a weight-adjusted projection matrix $\tilde{\bm{P}}^k_q$ 
\begin{align}
\tilde{\bm{P}}^k_q &= \bm{M}^{-1}\bm{M}_{1/\bm{J}^k}\bm{M}^{-1}\bm{V}_q^T\bm{W}\diag{\bm{J}^k} = \bm{M}^{-1}\bm{V}_q^T\bm{W}\diag{1/\bm{J}^k} \bm{V}_q\bm{P}_q\diag{\bm{J}^k} \nonumber\\
&= \bm{P}_q \diag{{1}/{\bm{J}^k}} \bm{V}_q\bm{P}_q \diag{\bm{J}^k}.
\label{eq:wadgproj}
\end{align}
We can similarly define a weight-adjusted lifting matrix $\tilde{\bm{L}}_q$ by replacing the weighted mass matrix in (\ref{eq:projliftcurved}) with the weight-adjusted mass matrix
\begin{align}
\tilde{\bm{L}}^k_q &= \bm{M}^{-1}\bm{M}_{1/\bm{J}^k}\bm{M}^{-1}\bm{V}_f^T\bm{W}_f\diag{\bm{J}^k_f} = \bm{M}^{-1}\bm{V}_q^T\bm{W}\diag{1/\bm{J}^k} \bm{V}_q\bm{L}_q\diag{\bm{J}^k_f} \nonumber\\
&= \bm{P}_q \diag{{1}/{\bm{J}^k}} \bm{V}_q\bm{L}_q \diag{\bm{J}^k_f}.
\label{eq:wadglift}
\end{align}

We can now introduce the weight-adjusted projection of the entropy variables $\bm{v}_h$ and the corresponding entropy-projected conservative variables $\tilde{\bm{u}}$ 
\begin{align}
\bm{u}_q = \bm{V}_q \bm{u}_h, \qquad \bm{v}_h = \tilde{\bm{P}}^k_q \bm{v}\LRp{\bm{u}_q}, \qquad 
\tilde{\bm{v}} = \LRs{\begin{array}{c}
\bm{V}_q\\
\bm{V}_f
\end{array}}\bm{v}_h, \qquad \tilde{\bm{u}} =  \LRs{\begin{array}{c}
\tilde{\bm{u}}_q\\
\tilde{\bm{u}}_f
\end{array}} = \bm{u}\LRp{\tilde{\bm{v}}}.
\label{eq:evars2}
\end{align}
A semi-discrete DG formulation for $\bm{u}_h$ can be constructed using the variables defined in (\ref{eq:evars2})
\begin{align}
&\td{\bm{u}_h}{t} + \LRs{\begin{array}{cc}\tilde{\bm{P}}^k_q & \tilde{\bm{L}}^k_q\end{array}}
  \sum_{j=1}^d \LRp{2\bm{D}^j_k \circ \bm{F}_{j,S}}\bm{1} + \sum_{j=1}^d \tilde{\bm{L}}^k_q \diag{\bm{n}_j}\LRp{\bm{f}_j^* - \bm{f}_j(\tilde{\bm{u}}_f)} = 0,\label{eq:dgform2}\\
  &\LRp{\bm{F}_{j,S}}_{mn} = \bm{f}_{j,S}\LRp{\LRp{\tilde{\bm{u}}}_m,\LRp{\tilde{\bm{u}}}_n}, \qquad 1 \leq m,n \leq N_q + N^f_q,\nonumber\\
  &\bm{f}_j^* = \bm{f}_{j,S}(\tilde{\bm{u}}_f^+,\tilde{\bm{u}}_f) \text{ on interior faces}.\nonumber
\end{align}
Since the  weight-adjusted mass matrix inverse is low-storage, and since the matrices $\bm{D}^j_k$ in (\ref{eq:dik}) can be assembled from reference matrices $\bm{D}^i_N$ and the values of geometric terms at quadrature points, the overall scheme requires only $O(N^d)$ storage per element.  We can additionally show that formulation (\ref{eq:dgform2}) is entropy conservative in the same sense as Theorem~\ref{thm:stab1}: 
\begin{theorem}
  Let $\bm{f}_{i,S}$ be an entropy conservative flux from Definition~\ref{def:entropyflux} and assume that $\bm{Q}^j_k\bm{1} = 0$ for $j = 1,\ldots,d$ over each element $D^k$.  Then, (\ref{eq:dgform2}) is entropy conservative in the sense that
\[
  \sum_k \bm{1}^T\diag{\bm{J}^k}\bm{W}\td{U(\bm{u}_q)}{t} = \sum_k \sum_{j=1}^d \bm{1}^T\diag{\bm{n}_j\circ\bm{J}^k_f}\bm{W}_{\partial \Omega} \LRp{\psi_j\LRp{\tilde{\bm{u}}_f}-\tilde{\bm{v}}_f^T\bm{f}_j^*}.
\]
\label{thm:stab2}
\end{theorem}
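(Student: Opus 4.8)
The plan is to test (\ref{eq:dgform2}) against the entropy variables in exactly the same way as in the proof of Theorem~\ref{thm:stab1}, the only genuinely new ingredient being a pair of algebraic identities showing that the weight-adjusted operators $\tilde{\bm{P}}^k_q$ and $\tilde{\bm{L}}^k_q$ become indistinguishable from the exact weighted operators $\bm{P}^k_q$ and $\bm{L}^k_q$ once they are premultiplied by the appropriate mass matrix. Let $\tilde{\bm{M}}^k = \bm{M}\bm{M}_{1/\bm{J}^k}^{-1}\bm{M}$ denote the weight-adjusted mass matrix, so that $\LRp{\tilde{\bm{M}}^k}^{-1} = \bm{M}^{-1}\bm{M}_{1/\bm{J}^k}\bm{M}^{-1}$. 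I would left-multiply (\ref{eq:dgform2}) by $\bm{v}_h^T\tilde{\bm{M}}^k$, where $\bm{v}_h = \tilde{\bm{P}}^k_q\bm{v}\LRp{\bm{u}_q}$ is the weight-adjusted projection of the entropy variables from (\ref{eq:evars2}).

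First I would establish the two identities
\[
\tilde{\bm{M}}^k\tilde{\bm{P}}^k_q = \bm{V}_q^T\bm{W}\diag{\bm{J}^k}, \qquad \tilde{\bm{M}}^k\tilde{\bm{L}}^k_q = \bm{V}_f^T\bm{W}_f\diag{\bm{J}^k_f},
\]
which follow immediately by substituting the definitions (\ref{eq:wadgproj}) and (\ref{eq:wadglift}) and using $\LRp{\tilde{\bm{M}}^k}^{-1} = \bm{M}^{-1}\bm{M}_{1/\bm{J}^k}\bm{M}^{-1}$. The point is that the right-hand sides are precisely $\bm{M}^k\bm{P}^k_q$ and $\bm{M}^k\bm{L}^k_q$: both the exact and the weight-adjusted operators are defined by inverting their respective mass matrices against the identical data $\bm{V}_q^T\bm{W}\diag{\bm{J}^k}$ and $\bm{V}_f^T\bm{W}_f\diag{\bm{J}^k_f}$, so premultiplying by that same mass matrix undoes the inverse in either case.

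With these identities in hand, the tested equation collapses term by term to the one appearing in the proof of Theorem~\ref{thm:stab1}. For the time-derivative term, symmetry of $\tilde{\bm{M}}^k$ gives $\bm{v}_h^T\tilde{\bm{M}}^k = \bm{v}\LRp{\bm{u}_q}^T\diag{\bm{J}^k}\bm{W}\bm{V}_q$, so that $\bm{v}_h^T\tilde{\bm{M}}^k\td{\bm{u}_h}{t} = \bm{v}\LRp{\bm{u}_q}^T\diag{\bm{J}^k}\bm{W}\td{\bm{u}_q}{t} = \bm{1}^T\diag{\bm{J}^k}\bm{W}\td{U(\bm{u}_q)}{t}$, the last equality being the pointwise chain rule $\bm{v}\LRp{\bm{u}_q}^T\td{\bm{u}_q}{t} = \td{U(\bm{u}_q)}{t}$ at each volume quadrature node. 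For the flux terms, the identities convert $\bm{v}_h^T\tilde{\bm{M}}^k\LRs{\begin{array}{cc}\tilde{\bm{P}}^k_q & \tilde{\bm{L}}^k_q\end{array}}$ into $\LRs{\begin{array}{cc}\tilde{\bm{v}}_q^T\bm{W}\diag{\bm{J}^k} & \tilde{\bm{v}}_f^T\bm{W}_f\diag{\bm{J}^k_f}\end{array}}$, with $\tilde{\bm{v}}_q = \bm{V}_q\bm{v}_h$ and $\tilde{\bm{v}}_f = \bm{V}_f\bm{v}_h$ as in (\ref{eq:evars2}); the surface penalty term is treated identically.

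Consequently, after testing, (\ref{eq:dgform2}) reduces to the same expression as the tested form of (\ref{eq:dgform1}), and the remainder of the argument is verbatim that of Theorem~\ref{thm:stab1}: the flux-differencing identity for $2\bm{D}^j_k\circ\bm{F}_{j,S}$ together with the summation-by-parts property (\ref{eq:sbpk}), the hypothesis $\bm{Q}^j_k\bm{1}=0$, and the entropy conservation property of $\bm{f}_{j,S}$ from Definition~\ref{def:entropyflux} telescope the volume contributions into entropy-potential differences on $\partial D^k$; summing over elements and invoking the watertight normal consistency (\ref{eq:normalsign}) cancels interior face contributions and leaves only the boundary terms on the right-hand side. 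The only substantive step, and the one I expect to be the crux, is the verification of the two mass-matrix identities above, since everything downstream is insensitive to whether the exact or the weight-adjusted operators were used. The delicate point worth double-checking is that the correct test functional is $\bm{v}_h^T\tilde{\bm{M}}^k$ rather than $\bm{v}_h^T\bm{M}^k$: it is the weight-adjusted mass matrix that must be paired with the weight-adjusted projection in order for the time-derivative term to reproduce $\td{U(\bm{u}_q)}{t}$ exactly.
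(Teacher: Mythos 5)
Your proposal is correct and follows essentially the same route as the paper's proof: your two mass-matrix identities are precisely the paper's use of (\ref{eq:projidentity})--(\ref{eq:liftidentity}) to pass to the weak form (\ref{eq:dgform2weak}) after premultiplying by $\bm{M}\bm{M}_{1/J^k}^{-1}\bm{M}$, your time-derivative computation (testing with $\bm{v}_h = \tilde{\bm{P}}^k_q\bm{v}\LRp{\bm{u}_q}$ and invoking the pointwise chain rule at quadrature nodes to obtain $\bm{1}^T\bm{W}\diag{\bm{J}^k}\td{U(\bm{u}_q)}{t}$) matches the paper's verbatim, and the paper likewise defers the SBP/flux-differencing/telescoping remainder to Theorem~\ref{thm:stab1}. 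You also correctly identify the crux, namely that the weight-adjusted projection must be paired with the weight-adjusted (not the exact weighted) mass matrix for the cancellation to occur.
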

\begin{proof}
The proof is similar to that of Theorem~\ref{thm:stab1}.  
First, recall from the definitions of the weight-adjusted projection matrix~(\ref{eq:wadgproj}) and weight-adjusted lift matrix~(\ref{eq:wadglift}) that
\begin{align}
\tilde{\bm{P}}^k_q &= \bm{M}^{-1}\bm{M}_{1/J^k} \bm{M}^{-1} \bm{V}_q^T \bm{W}  \diag{\bm{J}^k  }\label{eq:projidentity},\\
\tilde{\bm{L}}^k_q &= \bm{M}^{-1}\bm{M}_{1/J^k} \bm{M}^{-1} \bm{V}_f^T \bm{W}_f\diag{\bm{J}^k_f}\label{eq:liftidentity}.
\end{align}
Then, multiplying by the weight-adjusted mass matrix $\bm{M}\bm{M}_{1/J^k}^{-1}\bm{M}$ on both sides of (\ref{eq:dgform2}) and using (\ref{eq:projidentity}), (\ref{eq:liftidentity}) yields the weak form of (\ref{eq:dgform2})
\begin{equation}
\bm{M}\bm{M}_{1/J^k}^{-1}\bm{M}\td{\bm{u}_h}{t} + \LRs{\begin{array}{cc}{\bm{V}}_q^T & {\bm{V}}_f^T\end{array}}
\sum_{j=1}^d \LRp{2\bm{Q}^j_k \circ \bm{F}_{j,S}}\bm{1} + \sum_{j=1}^d {\bm{V}}_f^T\bm{W}_f \diag{\bm{n}_j}\LRp{\bm{f}_j^* - \bm{f}_j(\tilde{\bm{u}}_f)} = 0.
\label{eq:dgform2weak}
\end{equation}
Testing with the weight-adjusted projection of the entropy variables $\bm{v}_h = \tilde{\bm{P}}^k_q \bm{v}\LRp{\bm{u}_q}$ and using (\ref{eq:projidentity}) then yields for the time term
\begin{align*}
\LRp{\tilde{\bm{P}}^k_q \bm{v}\LRp{\bm{u}_q}}^T\bm{M}\bm{M}_{1/J^k}^{-1}\bm{M}\td{\bm{u}_h}{t} &= \bm{v}\LRp{\bm{u}_q}^T\bm{W}\diag{\bm{J}^k}\bm{V}_q \bm{M}^{-1}\bm{M}_{1/J^k} \bm{M}^{-1} \bm{M}\bm{M}_{1/J^k}^{-1}\bm{M}\td{\bm{u}_h}{t}\\
                                                                                               &= \bm{v}\LRp{\bm{u}_q}^T\bm{W}\diag{\bm{J}^k} \td{\bm{V}_q\bm{u}_h}{t} = \bm{1}^T\bm{W}\diag{\bm{J}^k} \left({\rm diag}\LRp{\bm{v}\LRp{\bm{u}_q}}\td{\bm{u}_q}{t}\right) \\
                                                                                               &= \bm{1}^T\bm{W}\diag{\bm{J}^k} \td{U(\bm{u}_q)}{t}.
\end{align*}
The remainder of the proof is identical to that of Theorem~\ref{thm:stab1} and \cite{chan2017discretely}.
\end{proof}

\subsection{Analysis of weight-adjusted projection} 

The construction of the discretely entropy stable weight-adjusted DG formulation (\ref{eq:dgform2}) replaces the $L^2$ projection operator $\Pi_N^k$ with the weight-adjusted projection operator $\tilde{\Pi}^k_N$.
While this preserves entropy stability, it is unclear whether $\tilde{\Pi}^k_N$ is high order accurate.  In this section, we prove that the weight-adjusted projection is high order accurate due to the fact that, for a fixed geometric mapping and sufficiently regular $u$, the difference between the $L^2$ and weight-adjusted projection is $\nor{\Pi_N^k u - \tilde{\Pi}^k_N  u}_{L^2\LRp{\Omega}} = O(h^{N+2})$.  Because the approximation error for the $L^2$ projection is $O(h^{N+1})$ for sufficiently regular $u$, the difference between the $L^2$ and weight-adjusted projection converges faster than the $L^2$ best approximation error.  Consequentially, solutions computed using the $L^2$ and weight-adjusted projection are typically indistinguishable for a fixed geometric mapping \cite{chan2018multi}.  

We first note that $\tilde{\Pi}^k_N  = \Pi_N\LRp{\frac{1}{J^k}\Pi_N\LRp{uJ^k}}$ is self-adjoint with respect to the $J$-weighted $L^2$ inner product
\begin{equation}
  \LRp{J^k \tilde{\Pi}^k_N  u, v}_{\hat{D}} = \LRp{\Pi_N\LRp{\frac{1}{J^k}\Pi_N\LRp{uJ^k}}, vJ^k}_{\hat{D}} = \LRp{uJ^k, \Pi_N\LRp{\frac{1}{J^k}\Pi_N\LRp{vJ^k}}}_{\hat{D}} =  \LRp{uJ^k, \tilde{\Pi}^k_N  v}_{\hat{D}}.
\label{eq:PNsym}
\end{equation}
Furthermore, using that the operator $T_{1/J^k}^{-1}$ is self-adjoint for $v \in P^N\LRp{\hat{D}}$ with respect to the $L^2$ inner product \cite{chan2016weight1}, we find that a projection-like property holds for the weight-adjusted $L^2$ inner product
\begin{equation}
  \LRp{T_{1/J^k}^{-1} \tilde{\Pi}^k_N  u,v}_{\hat{D}} = \LRp{ \frac{1}{J^k}\Pi_N(uJ^k),T_{1/J^k}^{-1}v}_{\hat{D}} = \LRp{\Pi_N(u J^k),v}_{\hat{D}} = \LRp{u J^k,v}_{\hat{D}}, \qquad \forall v\in P^N{\hat{D}}.
\label{eq:PNproj}
\end{equation}

To prove $\nor{\Pi_N^k u - \tilde{\Pi}^k_N  u}_{L^2\LRp{\hat{D}}} = O(h^{N+2})$, we use a generalized inverse inequality and results from \cite{chan2016weight1,chan2016weight2}.  We first introduce a modification of Theorem 3.1 in \cite{warburton2013low, chan2016weight1}
\begin{theorem}%[Theorem 3.1 in \cite{warburton2013low, chan2016weight1}]
Let $D^k$ be a quasi-regular element with representative size $h = \diam\LRp{D^k}$, and let $\hat{D}$ be the reference element.  For $N \geq 0$, $w\in W^{N+1,\infty}\LRp{D^k}$, and $u\in W^{r,2}\LRp{D^k}$, 
\begin{align*}
%\nor{u - \frac{1}{w} \Pi_N\LRp{{u}{w}}}_{L^2\LRp{D^k}} &\leq C h^{N+1}\nor{\frac{1}{\sqrt{J^k}}}_{L^{\infty}\LRp{D^k}}\nor{\frac{\sqrt{J^k}}{w}}_{L^{\infty}\LRp{D^k}} \nor{w}_{W^{N+1,\infty}\LRp{D^k}}  \nor{u}_{W^{N+1,2}\LRp{D^k}},\\
\nor{u - \frac{1}{w} \Pi_N\LRp{{u}{w}}}_{L^2\LRp{\hat{D}}} &\leq C h^{\min\LRp{r,N+1}} \nor{\frac{1}{\sqrt{J^k}}}_{L^{\infty}\LRp{D^k}}\nor{\frac{1}{w}}_{L^{\infty}\LRp{D^k}} \nor{w}_{W^{N+1,\infty}\LRp{D^k}} \nor{u}_{W^{r,2}\LRp{D^k}}.
\end{align*}
\label{thm:wproj}
\end{theorem}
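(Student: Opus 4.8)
The plan is to reduce the weighted estimate to the unweighted best-approximation bound being modified (``Theorem 3.1'' of \cite{warburton2013low, chan2016weight1}) together with an elementary Leibniz product estimate. The starting point is the algebraic identity
\[
u - \frac{1}{w}\Pi_N\LRp{uw} = \frac{1}{w}\LRp{uw - \Pi_N\LRp{uw}},
\]
which lets me pull the weight out of the reference-element norm,
\[
\nor{u - \frac{1}{w}\Pi_N\LRp{uw}}_{\Lhat} \leq \nor{\frac{1}{w}}_{L^{\infty}\LRp{D^k}}\,\nor{uw - \Pi_N\LRp{uw}}_{\Lhat},
\]
where I use that the $L^{\infty}$ norm of $1/w$ is unchanged under the pullback $\bm{\Phi}_k$, since it is a bijection between $\hat{D}$ and $D^k$. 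This isolates the factor $\nor{1/w}_{L^{\infty}\LRp{D^k}}$ appearing in the claim and leaves a \emph{single} projection error to control.

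For that remaining factor I apply the unweighted estimate to $g = uw$ regarded as a function on $\hat{D}$. Because $\Pi_N$ is the $L^2\LRp{\hat{D}}$-orthogonal projection onto $P^N\LRp{\hat{D}}$, a Bramble--Hilbert plus scaling argument on the quasi-regular element gives
\[
\nor{g - \Pi_N g}_{\Lhat} \leq C h^{\min\LRp{r,N+1}} \nor{\frac{1}{\sqrt{J^k}}}_{L^{\infty}\LRp{D^k}} \nor{g}_{W^{\min\LRp{r,N+1},2}\LRp{D^k}}.
\]
The two geometric factors arise exactly here: the power $h^{\min(r,N+1)}$ comes from bounding the reference-element seminorm of $g\circ\bm{\Phi}_k$ by physical seminorms (each reference derivative costs a factor $h$ under the chain rule for a size-$h$ map), and $\nor{1/\sqrt{J^k}}_{L^{\infty}}$ comes from converting the reference integral $\diff{\hat{\bm{x}}} = (J^k)^{-1}\diff{\bm{x}}$ back to $D^k$.

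Finally I split the product norm. Writing $s = \min(r,N+1)$ and applying the Leibniz rule to $\partial^{\alpha}(uw)$ for $\snor{\alpha}\leq s$ yields
\[
\nor{uw}_{W^{s,2}\LRp{D^k}} \leq C \nor{w}_{W^{s,\infty}\LRp{D^k}} \nor{u}_{W^{s,2}\LRp{D^k}} \leq C \nor{w}_{W^{N+1,\infty}\LRp{D^k}} \nor{u}_{W^{r,2}\LRp{D^k}},
\]
where the last step uses monotonicity of the Sobolev norms in the smoothness index together with $s\leq N+1$ and $s\leq r$. Chaining the three displays produces precisely the stated bound, with the weight derivatives landing in $L^{\infty}$ and the solution derivatives in $L^2$ as required.

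The main obstacle is the scaling step inside the base estimate: for a curvilinear (non-affine) mapping $\bm{\Phi}_k$, converting the reference $W^{s,2}\LRp{\hat{D}}$ seminorm of $g\circ\bm{\Phi}_k$ into physical $W^{s,2}\LRp{D^k}$ norms requires Fa\`a di Bruno bookkeeping, and one must check that every term carries at least the leading power $h^{s}$ while the lower-order derivatives of the map contribute only bounded constants under quasi-regularity. Since this is exactly the content of the cited Theorem 3.1 applied to $g = uw$, I would invoke it rather than redo it; the genuinely new ingredients are the factoring of $1/w$ and the product-rule estimate, both elementary once the base result is in hand.
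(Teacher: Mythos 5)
Your proposal is correct and follows essentially the same route as the paper, whose proof is only sketched as a ``straightforward modification'' of the arguments in \cite{warburton2013low, chan2016weight1}: those proofs use precisely your factorization $u - \frac{1}{w}\Pi_N\LRp{uw} = \frac{1}{w}\LRp{uw - \Pi_N\LRp{uw}}$, a Bramble--Hilbert plus scaling estimate for $\Pi_N$ applied to $g = uw$ (which produces the $h$ power and the $\nor{1/\sqrt{J^k}}_{L^{\infty}\LRp{D^k}}$ factor), and a Leibniz splitting of $\nor{uw}_{W^{s,2}\LRp{D^k}}$. Your treatment of reduced regularity via $s = \min\LRp{r,N+1}$ in both the Bramble--Hilbert step and the product estimate is exactly the modification the paper alludes to, and your deferral of the Fa\`a di Bruno bookkeeping for the curvilinear map to the cited Theorem 3.1 mirrors what the paper itself does.
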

The proof is a straightforward modification of the proofs presented in \cite{warburton2013low, chan2016weight1} accounting for reduced regularity of $u$ when $r < (N+1)$.  The next result we need is a generalized inverse inequality.  
\begin{lemma}
\label{lemma:sobolev}
Let $v \in P^N\LRp{\hat{D}}$, and let $h = \diam\LRp{D^k}$.  Then,
\[
  \nor{v}_{W^{N+1,2}\LRp{D^k}} \leq C_{N}  h^{-N} \nor{\sqrt{J^k}}_{L^{\infty}} \nor{\frac{1}{\sqrt{J^k}}}_{L^{\infty}} \nor{\frac{1}{J^k}\bm{G}^k}_{W^{N,\infty}\LRp{D^k}} \nor{v}_{L^2\LRp{D^k}}.
\]
where $C_{N}$ depends on $N$, but is independent of $h$.
\end{lemma}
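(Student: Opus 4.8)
The plan is to view $v\in P^N\LRp{\hat{D}}$ as the physical function $v\circ\bm{\Phi}_k^{-1}$ on $D^k$ and to bound each physical derivative $D^\alpha v$ with $\snor{\alpha}\leq N+1$ by repeatedly applying the change-of-variables relation $J^k\pd{u}{x_i} = \sum_{j}G^k_{ij}\pd{u}{\hat{x}_j}$, i.e.\ $\pd{}{x_i} = \sum_j\LRp{\frac{1}{J^k}\bm{G}^k}_{ij}\pd{}{\hat{x}_j}$, so that each physical derivative is the chain-rule coefficient $\frac{1}{J^k}\bm{G}^k$ (the inverse-Jacobian entries $\pd{\hat{x}_j}{x_i}$) times a reference derivative. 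The central observation is that, since $v$ has degree $N$, its reference derivatives of order $N+1$ vanish; consequently, after expanding a physical derivative of order $N+1$ via the product rule (the multivariate Fa\`{a} di Bruno formula), every surviving term carries a reference derivative of $v$ of order at most $N$. This is precisely what will produce the factor $h^{-N}$ rather than the $h^{-(N+1)}$ one would expect from a generic polynomial inverse inequality.

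First I would expand $D^\alpha\LRp{v\circ\bm{\Phi}_k^{-1}}$ for $\snor{\alpha}=N+1$ into a finite sum of terms of the form $\LRp{\text{product of derivatives of }\tfrac{1}{J^k}\bm{G}^k}\cdot\LRp{\text{reference derivative of }v\text{ of order }m}$, where the derivatives of the geometric coefficient have orders summing to $N+1-m$ and $1\leq m\leq N$ (the $m=N+1$ term being annihilated since $\deg v = N$). Each such geometric factor is a derivative of $\frac{1}{J^k}\bm{G}^k$ of order at most $N$, hence controlled uniformly by $\nor{\tfrac{1}{J^k}\bm{G}^k}_{W^{N,\infty}\LRp{D^k}}$, with the number and combinatorics of the terms (and any extra geometric factors) absorbed into a constant $C_N$ depending only on $N$ and $d$; this is where the quasi-uniformity of Assumption~\ref{ass:norm} and the bounded-geometry setting of \cite{chan2016weight1, chan2016weight2} are used.

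Next I would estimate each surviving term in $L^2\LRp{D^k}$ by pulling the integral back to $\hat{D}$: since $\nor{w}_{L^2\LRp{D^k}}^2 = \int_{\hat{D}}\snor{w}^2 J^k\diff{\hat{\bm{x}}}$, transferring between physical and reference $L^2$ norms yields the factors $\nor{\sqrt{J^k}}_{L^\infty}$ and $\nor{1/\sqrt{J^k}}_{L^\infty}$. On the fixed reference element I would invoke the standard polynomial inverse inequality $\nor{\LRp{\pd{}{\hat{x}}}^m v}_{L^2\LRp{\hat{D}}}\leq C_N\nor{v}_{L^2\LRp{\hat{D}}}$ (with an $h$-independent constant, as $\hat{D}$ is fixed) to replace every reference derivative of $v$ by $\nor{v}_{L^2\LRp{\hat{D}}}$, then convert back to $\nor{v}_{L^2\LRp{D^k}}$. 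Quasi-uniformity supplies the scaling $h^{-N}$ coming from the $N$ reference differentiations surviving in each term, and the lower-order contributions to the full $W^{N+1,2}$ norm (all dominated by $h^{-N}$ for $h\lesssim 1$) are collected to complete the estimate. The scaling bookkeeping here mirrors, and can cite, the proof of Theorem~\ref{thm:wproj} and of Theorem~3.1 of \cite{warburton2013low, chan2016weight1}.

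The main obstacle is the bookkeeping of the chain-rule expansion: one must verify that the order-$(N+1)$ reference derivative is the only term removed by $\deg v = N$, that every remaining term indeed carries at most $N$ reference derivatives of $v$ (so that exactly one power of $h$ is saved), and that all the geometric coefficients appearing are genuinely derivatives of $\frac{1}{J^k}\bm{G}^k$ of order $\leq N$ and hence bounded by $\nor{\tfrac{1}{J^k}\bm{G}^k}_{W^{N,\infty}\LRp{D^k}}$. Reconciling reference versus physical derivatives of the geometric terms, and confirming that the combinatorial proliferation of terms can be absorbed into $C_N$ without spoiling the stated single power of the geometric norm, is the delicate part; everything else is routine scaling.
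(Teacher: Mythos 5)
Your proposal is correct and follows essentially the same route as the paper's proof: a Fa\`{a} di Bruno chain-rule expansion in which the order-$(N+1)$ reference derivatives of $v\in P^N\LRp{\hat{D}}$ vanish, leaving terms with at most $N$ reference derivatives of $v$ and geometric coefficients controlled by $\nor{\frac{1}{J^k}\bm{G}^k}_{W^{N,\infty}\LRp{D^k}}$, followed by quasi-uniform scaling to produce $h^{-N}$ together with the $\nor{\sqrt{J^k}}_{L^\infty}$ and $\nor{1/\sqrt{J^k}}_{L^\infty}$ factors, and finite-dimensional norm equivalence on the fixed reference element to reduce the reference Sobolev norm to $\nor{v}_{L^2}$. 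The only difference is presentational: the paper first bounds $\nor{v}_{W^{N+1,2}\LRp{D^k}}$ by $\nor{v}_{W^{N,2}\LRp{D^k}}$ and then scales, whereas you expand each physical derivative directly into reference derivatives before pulling back, which amounts to the same estimate.
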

\begin{proof}
  By applying Fa\`{a} di Bruno's formula, we can express the degree $(N+1)$ Sobolev norm of $v$ on $D^k$ in terms of derivatives of $v$ on the reference element $\hat{D}$.  Noting that all $(N+1)$ derivatives of $v$ disappear for $v\in P^N\LRp{\hat{D}}$ allows us to bound the degree $(N+1)$ Sobolev norm of $v$ by its degree $N$ Sobolev norm
\[
  \nor{v}_{W^{N+1,2}\LRp{D^k}} \leq C_N \nor{\frac{1}{J^k}\bm{G}^k}_{W^{N,\infty}\LRp{D^k}} \nor{v}_{W^{N,2}\LRp{D^k}},
\]
where $\bm{G}^k$ is the matrix of scaled geometric terms for $D^k$.  Then, a scaling argument \cite{ciarlet1978finite, brenner2007mathematical} yields
\begin{align}
\nor{v}_{W^{N,2}\LRp{D^k}}  \leq C_1 h^{-N} \nor{\sqrt{J^k}}_{L^{\infty}} \nor{v}_{W^{N,2}\LRp{\hat{D}}}.  
\end{align}
The quantity $\nor{v}_{W^{N,2}\LRp{\hat{D}}}$ can be bounded by noting that $v\in P^N\LRp{\hat{D}}$.  Since $P^N\LRp{\hat{D}}$ is finite-dimensional, the Sobolev norm can be bounded from above by the $L^2$ norm of $\hat{D}$ with a constant $C_{2}$ depending on $N, d$.  By another scaling argument, we have 
\begin{align}
\nor{v}_{W^{N+1,2}\LRp{\hat{D}}} \leq C_{2} \nor{v}_{L^2\LRp{\hat{D}}} \leq \nor{\frac{1}{\sqrt{J^k}}}_{L^{\infty}} C_{2}  \nor{v}_{L^2\LRp{D^k}}.
\end{align}
%where $\bm{M}$ is the $L^2$ mass matrix over $D^k$ and $\bm{K}_N$ is the Gram matrix corresponding to the Sobolev inner product on the reference element  $W^{N+1,2}\LRp{\hat{D}}$.  The constant $C_N$ depends on the largest generalized eigenvalue of \ref{eq:eig}, which in turn depends on the order $N$ and dimension $d$.  
\end{proof}

We can now prove that $\tilde{\Pi}^k_N u$ is superconvergent to the curvilinear $L^2$ projection $\Pi_N^k u$: 
\begin{theorem}
Let $u \in W^{r,2}\LRp{D^k}$.  The difference between the $L^2$ projection $\Pi^k_Nu$ and the weight-adjusted projection $\tilde{\Pi}^k_N u$ is
\[
\nor{\Pi_N^k u - \tilde{\Pi}^k_N  u}_{L^2\LRp{D^k}} \leq C_N C_J h^{\min\LRp{r,N+1}+1}\nor{u}_{W^{N+1,2}\LRp{D^k}},
\]
where $C_N$ is a mesh-independent constant which depends on $N, d$ and $C_J$ is
\[
  C_J = \nor{J^k}_{L^{\infty}\LRp{D^k}}^{1.5}  \nor{\frac{1}{J^k}}_{L^{\infty}\LRp{D^k}}^{1.5} \nor{\frac{1}{J^k}}_{W^{N+1,\infty}\LRp{D^k}}\nor{J^k}_{W^{N+1,\infty}\LRp{D^k}}\nor{\frac{1}{J^k}\bm{G}^k}_{W^{N,\infty}\LRp{D^k}}.
\]
\label{thm:superconverge}
\end{theorem}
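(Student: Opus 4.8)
The plan is to reduce the estimate to a comparison between a weighted and an unweighted $L^2$ projection of a \emph{single} auxiliary function, and then to exploit the fact that the reference-element $(N+1)$st derivatives of a degree $N$ polynomial vanish. First I would introduce $g = \frac{1}{J^k}\Pi_N\LRp{u J^k}$. By the explicit formula \eqref{eq:wadgprojop}, $\tilde{\Pi}^k_N u = \Pi_N g$ is simply the reference $L^2$ projection of $g$. On the other hand, since $\LRp{J^k u, v}_{\hat{D}} = \LRp{\Pi_N\LRp{u J^k},v}_{\hat{D}} = \LRp{J^k g, v}_{\hat{D}}$ for all $v \in P^N\LRp{\hat{D}}$, the definition \eqref{eq:l2curvmap} of the curvilinear projection gives $\Pi^k_N u = \Pi^k_N g$. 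Thus $e := \Pi^k_N u - \tilde{\Pi}^k_N u = \Pi^k_N g - \Pi_N g \in P^N\LRp{\hat{D}}$ is precisely the difference between the $J$-weighted and the unweighted projection of the same function $g$.

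Second, I would measure $e$ in the $J$-weighted norm, which equals the physical $L^2$ norm, $\nor{e}_{L^2\LRp{D^k}}^2 = \LRp{J^k e, e}_{\hat{D}}$. Subtracting the two defining relations yields the Galerkin-type orthogonality $\LRp{J^k e, v}_{\hat{D}} = \LRp{J^k\LRp{g - \Pi_N g}, v}_{\hat{D}}$ for all $v\in P^N\LRp{\hat{D}}$. Testing with $v = e$ and using that $g - \Pi_N g$ is $L^2$-orthogonal to $P^N\LRp{\hat{D}}$ to subtract $\Pi_N\LRp{J^k e}$ from $J^k e$, Cauchy--Schwarz gives
\[
\nor{e}_{L^2\LRp{D^k}}^2 \le \nor{g - \Pi_N g}_{L^2\LRp{\hat{D}}}\, \nor{J^k e - \Pi_N\LRp{J^k e}}_{L^2\LRp{\hat{D}}}.
\]

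Third, I bound the two factors separately. For the first, a triangle inequality through $u$ together with Theorem~\ref{thm:wproj} (with $w = J^k$ to control $\nor{u - g}_{L^2\LRp{\hat{D}}}$ and $w = 1$ to control $\nor{u - \Pi_N u}_{L^2\LRp{\hat{D}}}$) shows $\nor{g - \Pi_N g}_{L^2\LRp{\hat{D}}} = O\LRp{h^{\min\LRp{r,N+1}}}$. For the second factor I use that $\Pi_N$ is $O\LRp{h^{N+1}}$ accurate, $\nor{J^k e - \Pi_N\LRp{J^k e}}_{L^2\LRp{\hat{D}}} \lesssim h^{N+1}\nor{J^k e}_{W^{N+1,2}\LRp{D^k}}$, then extract $\nor{J^k}_{W^{N+1,\infty}\LRp{D^k}}$ by the Leibniz rule and apply the inverse inequality of Lemma~\ref{lemma:sobolev} to $e \in P^N\LRp{\hat{D}}$. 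The upshot is $\nor{J^k e - \Pi_N\LRp{J^k e}}_{L^2\LRp{\hat{D}}} \lesssim h\, \nor{e}_{L^2\LRp{D^k}}$, i.e.\ this factor carries an \emph{extra} power of $h$.

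Combining the two bounds and cancelling one factor of $\nor{e}_{L^2\LRp{D^k}}$ produces the rate $h^{\min\LRp{r,N+1}+1}$, and collecting the geometric weights appearing in Theorem~\ref{thm:wproj}, in the Leibniz step, and in Lemma~\ref{lemma:sobolev} assembles the constant $C_J$. I expect the main obstacle to be exactly this gain of the extra power of $h$ in the second factor: the product $h^{N+1}\cdot h^{-N}$ is favorable only because Lemma~\ref{lemma:sobolev} delivers $h^{-N}$ rather than the generic $h^{-(N+1)}$, which in turn hinges on the vanishing of the reference $(N+1)$st derivatives of the degree $N$ polynomial $e$. Without it the two projections would only be seen to agree to the best-approximation order $h^{\min\LRp{r,N+1}}$, losing the superconvergence. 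The remaining difficulty is purely bookkeeping, namely tracking the $L^\infty$ and Sobolev norms of $J^k$, $1/J^k$, and $\frac{1}{J^k}\bm{G}^k$ through each inequality so that they combine into the stated $C_J$.
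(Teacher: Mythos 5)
Your proposal is correct, and its engine is the same as the paper's: express $\nor{e}_{L^2\LRp{D^k}}^2$ for $e = \Pi_N^k u - \tilde{\Pi}^k_N u$ as an inner product of two $L^2$-projection errors via a double orthogonality trick, bound one factor at order $h^{\min\LRp{r,N+1}}$ by Theorem~\ref{thm:wproj}, bound the other at order $h^{N+1}$ times a $W^{N+1,2}$ norm of a degree-$N$ polynomial, and convert that norm back to $L^2$ with the $h^{-N}$ inverse estimate of Lemma~\ref{lemma:sobolev}, whose gain of one power of $h$ (owing to the vanishing $(N+1)$st reference derivatives of the polynomial error) you correctly single out as the crux. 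The genuine difference is the decomposition. The paper expands $\LRp{u - \tilde{\Pi}^k_N u, v}_{\hat{D}}$ directly and arrives at the pairing $\LRp{uJ^k - \Pi_N\LRp{uJ^k},\ \frac{v}{J^k} - \Pi_N\LRp{\frac{v}{J^k}}}_{\hat{D}}$, placing the weight $1/J^k$ on the polynomial side; your reduction through $g = \frac{1}{J^k}\Pi_N\LRp{uJ^k}$, built on the identity $\Pi_N^k u = \Pi_N^k g$ (which the paper never isolates, and which is valid: $\LRp{J^k g, v}_{\hat{D}} = \LRp{\Pi_N\LRp{uJ^k},v}_{\hat{D}} = \LRp{uJ^k,v}_{\hat{D}}$ for $v \in P^N\LRp{\hat{D}}$), instead arrives at $\LRp{g - \Pi_N g,\ J^k e - \Pi_N\LRp{J^k e}}_{\hat{D}}$, placing $J^k$ on the polynomial side. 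Two consequences are worth noting. First, your Leibniz step produces $\nor{J^k}_{W^{N+1,\infty}\LRp{D^k}}$ where the paper's produces $\nor{\frac{1}{J^k}}_{W^{N+1,\infty}\LRp{D^k}}$, so the constant you assemble is a harmless variant of the stated $C_J$ (comparable under the asymptotically affine assumption, but not literally identical; say so explicitly if you write this up). Second, by keeping the weight inside every inner product, your algebra sidesteps the paper's intermediate step $\LRb{\LRp{u - \tilde{\Pi}^k_N u, vJ^k}_{\hat{D}}} \leq \nor{J^k}_{L^\infty\LRp{D^k}}\LRb{\LRp{u - \tilde{\Pi}^k_N u, v}_{\hat{D}}}$, which as written is not a valid inequality for a sign-changing integrand; your rearrangement reaches the same final estimate without it, so your organization of the proof is actually the cleaner one.
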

\begin{proof}
We can rewrite the norm of the difference between the weight-adjusted and $L^2$ projections
\[
\nor{\Pi_N^k  u - \tilde{\Pi}^k_N  u}_{L^2\LRp{D^k}}^2 = \LRp{\Pi_N^k  u - \tilde{\Pi}^k_N  u,vJ^k}_{\hat{D}}, \qquad v = \Pi_N^k  u - \tilde{\Pi}^k_N  u.
\]
Because $v \in P^N\LRp{\hat{D}}$, we can also evaluate the squared error as
\begin{align*}
\nor{\Pi_N^k  u - \tilde{\Pi}^k_N  u}_{L^2\LRp{D^k}}^2 &= \LRb{\LRp{\Pi_N^k u,vJ^k}_{\hat{D}} - \LRp{\tilde{\Pi}^k_N  u,vJ^k}_{\hat{D}}} = \LRb{\LRp{u,vJ^k}_{\hat{D}} - \LRp{\tilde{\Pi}^k_N  u,vJ^k}_{\hat{D}}} \\
%&\leq \LRp{J^k,1}_{\hat{D}} \LRb{\LRp{u-\tilde{\Pi}^k_Nu,v}_{\hat{D}}} = \LRb{D^k}\LRb{\LRp{u-\tilde{\Pi}^k_Nu,v}_{\hat{D}}},
&\leq \nor{J^k}_{L^{\infty}\LRp{D^k}} \LRb{\LRp{u-\tilde{\Pi}^k_Nu,v}_{\hat{D}}}.% = \LRb{D^k}\LRb{\LRp{u-\tilde{\Pi}^k_Nu,v}_{\hat{D}}},
\end{align*}
%where we have applied the Cauchy-Schwarz inequality in the last steps.  
We can then note that $\tilde{\Pi}^k_Nu = \Pi_N\LRp{\frac{1}{J^k} \Pi_N\LRp{uJ^k}}$ to show that
\begin{align*}
{\LRp{u-\tilde{\Pi}^k_Nu,v}_{\hat{D}}} &= {\LRp{uJ^k,\frac{v}{J^k}}_{\hat{D}}-\LRp{\Pi_N\LRp{\frac{1}{J^k} \Pi_N\LRp{uJ^k}},v}_{\hat{D}}} \\
&= \LRp{uJ^k,\frac{v}{J^k}}_{\hat{D}}-\LRp{\Pi_N\LRp{uJ^k},\frac{v}{J^k}}_{\hat{D}}. %+ \LRp{\Pi_N\LRp{uJ^k},\Pi_N\LRp{\frac{v}{J^k}}}_{\hat{D}} - \LRp{\Pi_N\LRp{uJ^k},\Pi_N\LRp{\frac{v}{J^k}}}_{\hat{D}}\\
%&= \LRp{uJ^k,\frac{v}{J^k}}_{\hat{D}}-\LRp{\Pi_N\LRp{uJ^k},\frac{v}{J^k}}_{\hat{D}} + \LRp{\Pi_N\LRp{uJ^k},\Pi_N\LRp{\frac{v}{J^k}}}_{\hat{D}} - \LRp{\Pi_N\LRp{uJ^k},\Pi_N\LRp{\frac{v}{J^k}}}_{\hat{D}}
\end{align*}
Adding and subtracting $\LRp{\Pi_N\LRp{uJ^k},\Pi_N\LRp{\frac{v}{J^k}}}_{\hat{D}}$ and using Theorem~\ref{thm:wproj} (noting that $v\in W^{N+1,2}\LRp{D^k}$ since it is polynomial) gives
\begin{align*}
\nor{\Pi^k_Nu-\tilde{\Pi}^k_Nu}_{L^2\LRp{D^k}}^2 &\leq \nor{J^k}_{L^{\infty}\LRp{D^k}}\LRb{\LRp{u-\tilde{\Pi}^k_Nu,v}_{\hat{D}}}\\
&= \nor{J^k}_{L^{\infty}\LRp{D^k}}\LRb{\LRp{uJ^k - \Pi_N\LRp{uJ^k},\frac{v}{J^k}-\Pi_N\LRp{\frac{v}{J^k}}}_{\hat{D}}} \\
&\leq \nor{J^k}_{L^{\infty}\LRp{D^k}}\nor{uJ^k - \Pi_N\LRp{uJ^k}}_{\hat{D}}\nor{\frac{v}{J^k}-\Pi_N\LRp{\frac{v}{J^k}}}_{\hat{D}}\\
&\leq C h^{\min\LRp{r,N+1}+N+1} \tilde{C}_J \nor{u}_{W^{r,2}\LRp{D^k}} \nor{v}_{W^{N+1,2}\LRp{D^k}},
\end{align*}
where 
\[
\tilde{C}_J = \nor{J^k}_{L^{\infty}\LRp{D^k}}\nor{\frac{1}{J^k}}_{L^{\infty}\LRp{D^k}} \nor{J^k}_{W^{N+1,\infty}\LRp{D^k}}\nor{\frac{1}{J^k}}_{W^{N+1,\infty}\LRp{D^k}}.
\]
Applying Lemma~\ref{lemma:sobolev} then yields
\begin{align*}
\nor{\Pi^k_Nu-\tilde{\Pi}^k_Nu}_{L^2\LRp{D^k}}^2 &\leq C_N h^{\min\LRp{r,N+1}+1} C_J \nor{u}_{W^{N+1,2}\LRp{D^k}} \nor{v}_{L^2\LRp{D^k}}.
\end{align*}
Dividing through by $\nor{v}_{L^2\LRp{D^k}} = \nor{\Pi^k_N u - \tilde{\Pi}^k_N  u}_{L^2\LRp{D^k}}$ gives the desired result.  % $C_J = \nor{J}_{L^{\infty}} \nor{\frac{1}{J}}^2_{L^\infty} \nor{J}^2_{W^{N+1,\infty}\LRp{\hat{D}}}$.
\end{proof}
Theorem~\ref{thm:wproj} can be used to show that the $L^2$ error between $\Pi^k_N u, \tilde{\Pi}^k_N u$ and $u \in W^{r,2}\LRp{D^k}$ is $O(h^r)$.  Theorem~\ref{thm:superconverge} demonstrates that the $L^2$ difference between $\Pi^k_N u, \tilde{\Pi}^k_N u$ is $O(h^{r+1})$, or at least one order higher than the approximation error.  We note that optimal convergence of the weight-adjusted projection requires that the geometric mapping $\bm{\Phi}_k$ is asymptotically affine (i.e., the Sobolev norm of $J^k, \bm{G}^k$ does not grow under mesh refinement), which is ensured under nested mesh refinement and appropriate curvilinear blending strategies \cite{lenoir1986optimal, warburton2013low, chan2016weight2}.  
%We note that, while Theorem~\ref{thm:superconverge} requires sufficient smoothness of $u$, numerical experiments in Section~\ref{sec:num} demonstrate that even for discontinuous functions, the $L^2$ and weight-adjusted projections are indistinguishable from each other on a curved element.  

\subsubsection{Local and global conservation}
\label{sec:conservation}
We next address local conservation of the weight-adjusted scheme (\ref{eq:dgform2}), which is also referred to as primary conservation \cite{fisher2013discretely, fisher2013high, carpenter2014entropy, friedrich2017entropy}.  We begin by noting that (\ref{eq:dgform2}) is locally conservative with respect to the weight-adjusted inner product.  Testing (\ref{eq:dgform2weak}) with $\bm{1}$ yields
\[
\bm{1}^T\bm{M}\bm{M}_{1/J^k}^{-1}\bm{M}\td{\bm{u}_h}{t} + 
\sum_{j=1}^d \bm{1}^T \LRp{2\bm{Q}^j_k \circ \bm{F}_{j,S}}\bm{1} + \sum_{j=1}^d \bm{1}^T\bm{W}_f \diag{\bm{n}_j}\LRp{\bm{f}_j^* - \bm{f}_j(\tilde{\bm{u}}_f)} = 0.
\]
Local conservation can be shown by applying the SBP property (\ref{eq:sbpk}) and noting that $\LRp{\bm{B}^i_k \circ \bm{F}_{j,S}}\bm{1} = \bm{W}_f\diag{\bm{n}_j \circ \bm{J}^k_f}\bm{f}_j(\bm{u})$ (using the consistency of $\bm{f}_S$ and diagonal nature of $\bm{B}^i_k$) yields
\[
\bm{1}^T\bm{M}\bm{M}_{1/J^k}^{-1}\bm{M}\td{\bm{u}_h}{t} + 
\sum_{j=1}^d \bm{1}^T \LRp{\LRp{\bm{Q}^j_k-\LRp{\bm{Q}^j_k}^T} \circ \bm{F}_{j,S}}\bm{1} + \sum_{j=1}^d \bm{1}^T\bm{W}_f \diag{\bm{n}_j}\LRp{\bm{f}_j^*} = 0.
\]
Since $\LRp{\bm{Q}^j_k-\LRp{\bm{Q}^j_k}^T} $ is skew-symmetric and $\bm{F}_{j,S}$ is symmetric, the Hadamard product of these two matrices is skew-symmetric.  As a result, $\bm{1}^T \LRp{\LRp{\bm{Q}^j_k-\LRp{\bm{Q}^j_k}^T} \circ \bm{F}_{j,S}}\bm{1} = 0$ and 
\begin{equation}
\bm{1}^T\bm{M}\bm{M}_{1/J^k}^{-1}\bm{M}\td{\bm{u}_h}{t} + \sum_{j=1}^d \bm{1}^T\bm{W}_f \diag{\bm{n}_j}\LRp{\bm{f}_j^*} = 0.
\label{eq:localcons}
\end{equation}
Global conservation is shown by summing (\ref{eq:localcons}) over all elements $D^k$.  Because the flux $\bm{f}_j^*$ is single-valued on each face and the outward normal $\bm{n}_j$ changes sign on adjacent elements, the contributions $\bm{1}^T\bm{W}_f \diag{\bm{n}_j}\LRp{\bm{f}_j^*}$ cancel on all interior interfaces.\footnote{Conservation still holds when incorporating entropy dissipation through penalty terms involving jumps.  This is because the definition of the jump changes sign on adjacent elements, such that jump contributions cancel when summing over all elements.} On periodic meshes, this yields conservation with respect to the weight-adjusted mass matrix
\[
\sum_k \bm{1}^T\bm{M}\bm{M}_{1/J^k}^{-1}\bm{M}\td{\bm{u}_h}{t} = 0.
\]
However, while the weight-adjusted approximation of the mass matrix is high order accurate and efficient, it does not preserve the average over a physical element, which is equivalent to the $J$-weighted average over the reference element.  This is due to the fact that, in general,
\begin{equation}
\int_{\hat{D}}u J^k \diff{\hat{\bm{x}}} - \int_{\hat{D}} T_{1/J^k}^{-1}u \diff{\hat{\bm{x}}} \approx \bm{1}^T\bm{M}_{J^k}\bm{u} - \bm{1}^T\bm{M}\bm{M}_{1/J^k}^{-1}\bm{M}\bm{u} \neq 0.  
\label{eq:conserr}
\end{equation}
Results in \cite{chan2016weight1} show that the difference between the true mean and weight-adjusted mean in (\ref{eq:conserr}) converges extremely fast at a rate of $O(h^{2N+2})$.  However, for systems of conservation laws, it is often desired that the local element average is preserved exactly up to machine precision.  We present two simpler approaches to ensuring local conservation in this section.  

The simplest way to ensure local conservation is to approximate $J$ using a degree $N$ polynomial and utilize a sufficiently accurate quadrature.  Then, we have the following lemma:
\begin{lemma}
Let $J^k \in P^N$, and let integrals be computed using quadrature which is exact for degree $2N$ polynomials.  Then, 
\[
\bm{1}^T\bm{M}\bm{M}^{-1}_{1/J^k}\bm{M}\bm{u} = \int_{\hat{D}}T^{-1}_{1/J^k} u \diff{\hat{\bm{x}}}=  \int_{\hat{D}} u J^k\diff{\hat{\bm{x}}}.
\]
\label{eq:conscorrect1}
\end{lemma}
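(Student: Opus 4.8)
The plan is to track the single polynomial $w := T_{1/J^k}^{-1}u \in P^N\LRp{\hat{D}}$, whose coefficient vector is $\bm{u}_J := \bm{M}_{1/J^k}^{-1}\bm{M}\bm{u}$, and to prove the left equality and the right equality separately. By construction $\bm{u}_J$ solves $\bm{M}_{1/J^k}\bm{u}_J = \bm{M}\bm{u}$, which is nothing more than the quadrature form of the defining relation (\ref{eq:wadgTw}): expanding $\bm{M}_{1/J^k} = \bm{V}_q^T\bm{W}\diag{1/\bm{J}^k}\bm{V}_q$ and $\bm{M} = \bm{V}_q^T\bm{W}\bm{V}_q$ and testing against an arbitrary coefficient vector shows it is equivalent to
\[
\sum_{\ell=1}^{N_q} \hat{w}_\ell\, \frac{w\LRp{\hat{\bm{x}}_\ell}}{J^k\LRp{\hat{\bm{x}}_\ell}}\, v\LRp{\hat{\bm{x}}_\ell} = \sum_{\ell=1}^{N_q}\hat{w}_\ell\, u\LRp{\hat{\bm{x}}_\ell}\, v\LRp{\hat{\bm{x}}_\ell}, \qquad \forall v\in P^N\LRp{\hat{D}}.
\]

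For the \emph{left} equality I would observe, exactly as in the interpretation already used for (\ref{eq:conserr}), that $\bm{1}^T\bm{M}\bm{u}_J$ is the quadrature approximation of $\int_{\hat{D}} w\diff{\hat{\bm{x}}}$. Since $w\in P^N\LRp{\hat{D}}$ and the rule is exact through degree $2N\geq N$, this approximation is exact, so $\bm{1}^T\bm{M}\bm{u}_J = \int_{\hat{D}} T_{1/J^k}^{-1}u\diff{\hat{\bm{x}}}$. Note that this step needs neither $J^k\in P^N$ nor the full degree-$2N$ strength of the quadrature; it is essentially bookkeeping.

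The \emph{right} equality is the crux, and it is precisely where both hypotheses enter. Because $J^k\in P^N$, it is itself an admissible test function, so I would set $v = J^k$ in the relation above. On the left the weight cancels pointwise, $\LRp{w/J^k}\cdot J^k = w$, leaving $\sum_\ell \hat{w}_\ell\, w\LRp{\hat{\bm{x}}_\ell} = \int_{\hat{D}} w\diff{\hat{\bm{x}}}$ (exact since $w$ has degree $N$). On the right one is left with $\sum_\ell \hat{w}_\ell\, u\LRp{\hat{\bm{x}}_\ell} J^k\LRp{\hat{\bm{x}}_\ell}$, the quadrature integral of the \emph{product} $uJ^k$; this integrand is a degree-$2N$ polynomial, so here the exactness-through-degree-$2N$ assumption makes it equal to $\int_{\hat{D}} uJ^k\diff{\hat{\bm{x}}}$. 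Equating the two sides yields $\int_{\hat{D}} T_{1/J^k}^{-1}u\diff{\hat{\bm{x}}} = \int_{\hat{D}} uJ^k\diff{\hat{\bm{x}}}$, which closes the chain together with the left equality.

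The one step I would be most careful about is the choice $v = J^k$: it is legitimate exactly because $J^k$ has been taken polynomial of degree at most $N$, and it is the single mechanism that simultaneously (i) cancels the $1/J^k$ weight on the left and (ii) raises the right-hand integrand to degree $2N$. Thus the two hypotheses ($J^k\in P^N$ and degree-$2N$ exact quadrature) are used in tandem and neither can be dropped independently — this coupling, rather than any delicate estimate, is the real content of the lemma.
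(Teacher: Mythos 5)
Your proof is correct and takes essentially the same route as the paper: both rest on using $v = J^k$ as an admissible test function in (\ref{eq:wadgTw}) (legitimate precisely because $J^k \in P^N$), cancelling the $1/J^k$ weight pointwise, and invoking degree-$2N$ quadrature exactness for the product $uJ^k \in P^{2N}$. The only difference is presentational — you work directly with the quadrature sums and verify the left equality explicitly, whereas the paper argues at the continuous level and adds a one-line remark that the argument survives quadrature.
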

\begin{proof}
The proof relies on (\ref{eq:wadgTw}), which states that $\LRp{\frac{1}{J^k} T^{-1}_{1/J^k}u,v}_{\hat{D}} = \LRp{u,v}_{\hat{D}}$ for all $v\in P^N$.  If $J^k$ is a polynomial of degree $N$, then taking $v = 1$ yields
\[
\LRp{T^{-1}_{1/J^k}u,1}_{\hat{D}} = \LRp{\frac{1}{J^k}T^{-1}_{1/J^k}u,{J^k}}_{\hat{D}}= \LRp{u,{J^k}}_{\hat{D}} = \int_{\hat{D}} uJ^k\diff{\hat{\bm{x}}}. 
\]
Additionally, the proof still holds if integrals are approximated using a quadrature rule which exactly integrates $uJ^k \in P^{2N}$.
\end{proof}
We note that, for isoparametric curved elements, $J^k \not\in P^N$ in general (in 2D, $J^k \in P^{2N-2}$, while in 3D, $J^k \in P^{3N-3}$ \cite{johnen2013geometrical}).  Thus, to ensure local conservation, we will approximate $J^k$ using a degree $N$ polynomial (for example, the interpolant or $L^2$ projection onto $P^N$).  We note that this approximation is required only in the weight-adjusted mass matrix, and does not modify the scaled geometric terms $G^k_{ij}$.  

The second approach relies on a simple correction which restores exact conservation of the true mean.  In \cite{chan2016weight2}, it was shown that a rank one correction of the weight-adjusted mass matrix inverse preserves the mean exactly.  However, this requires the use of the Sherman--Morrison formula to compute the inverse of a rank one matrix update, which can be cumbersome to incorporate.  We present a simpler explicit correction formula which does not involve matrices.  Let $u_J$ and $u_{\rm WADG}$ be defined through 
%coefficients 
%\[
%\bm{u} = \bm{M}^{-1}\bm{M}_{1/J}\bm{M}^{-1} \LRp{\bm{V}_q^T\bm{W}\bm{f}},
%\]
%where $\bm{f}$ are the values of $f$ at quadrature points on $\hat{D}$.  
\begin{align*}
\LRp{u_J J,v}_{\hat{D}} &= \LRp{f,v}_{\hat{D}}, \qquad \forall v \in P^N\LRp{\hat{D}},\\
\LRp{T_{1/J}^{-1}u_{\rm WADG},v}_{\hat{D}} &= \LRp{f,v}_{\hat{D}}, \qquad \forall v \in P^N\LRp{\hat{D}},
\end{align*}
where $u_J$ corresponds to the inversion of the weighted mass matrix and $u_{\rm WADG}$ corresponds to the inversion of a weight-adjusted mass matrix.  For example, if $f = u J$ for some function $u(\bm{x})$, then $u_J = \Pi^k_N u$ and $u_{\rm WADG} = \tilde{\Pi}^k_N u$.  To ensure both local and global conservation, we require that the weighted average of $u_{\rm WADG}$ is the same as the weighted average of $u_J$.  Let the conservative weight-adjusted $\tilde{u}_{\rm WADG}$ be defined as
\begin{equation}
  \tilde{u}_{\rm WADG} = u_{\rm WADG} + \frac{\int_{\hat{D}} \left(f  - J u_{\rm WADG}\right)\diff{\hat{\bm{x}}}}{\int_{\hat{D}} J \diff{\hat{\bm{x}}}}.  
  \label{eq:conscorrect}
\end{equation}
Taking the weighted integral of $\tilde{u}_{\rm WADG}$ yields
\[
  \int_{\hat{D}}\tilde{u}_{\rm WADG}J \diff{\hat{\bm{x}}} = \int_{\hat{D}} f \diff{\hat{\bm{x}}}.
\]
In other words, (\ref{eq:conscorrect}) ensures local conservation by correcting the weighted average of $u_{\rm WADG}$ to match that of $u_J$.  Applying this correction to the right hand side of (\ref{eq:dgform2}) then yields a scheme which locally and globally conserves mean values of the conservative variables.

This approach is applicable to an arbitrary weight, and can be generalized to matrix-valued weights as well \cite{chan2017weight}.  Moreover, using Theorem 6 in \cite{chan2016weight1}, one can show that the $L^2$ norm of the difference $\tilde{u}_{\rm WADG} - u_{\rm WADG}$ is $O(h^{2N+1})$, and does not affect high order accuracy.  

\section{Enforcing the discrete geometric conservation law}
\label{sec:4}
An important aspect of Theorem~\ref{thm:stab1} is the assumption that $\bm{Q}^j_k\bm{1} = 0$ for $j = 1,\ldots,d$.  However, this is not always guaranteed to hold for $\bm{Q}^j_k$ as defined through (\ref{eq:dik}).  In this section, we discuss methods of constructing the geometric terms $G^k_{ij}$ for curvilinear meshes in a way that ensures $\bm{Q}^j_k\bm{1} = 0$.  

From (\ref{eq:dik}), the condition $\bm{Q}^j_k\bm{1} = 0$ is equivalent to
\begin{align*}
\bm{Q}^j_k\bm{1} = \bm{W}_N\sum_{j=1}^d\bm{D}^j_N\circ \avg{\bm{G}_{ij}^k}\bm{1} &= \frac{1}{2}\bm{W}_N\sum_{j=1}^d \LRp{ \diag{\bm{G}_{ij}^k}\bm{D}^j_N \bm{1} + \bm{D}^j_N\diag{\bm{G}_{ij}^k}\bm{1}} \\
&= \frac{1}{2}\bm{W}_N\sum_{j=1}^d \bm{D}^j_N \bm{G}_{ij}^k = 0,
\end{align*}
where we have used that $\bm{D}^j_N \bm{1} = 0$ to eliminate the first term.  Since $\bm{W}_N$ is a diagonal matrix with positive entries, $\bm{Q}^j_k\bm{1} = 0$ is equivalent to ensuring that a discrete version of the GCL (\ref{eq:gcl}) holds
\begin{equation}
\sum_{j=1}^d \bm{D}^j_N \bm{G}_{ij}^k = 0.
\label{eq:dgcl}
\end{equation}
This condition is required to ensure that free-stream preservation holds at the discrete level.  In other words, we wish to ensure that the semi-discrete scheme preserves (for $\bm{u}$ constant) 
\[
\pd{\bm{u}}{t} + \Grad \cdot \bm{f}(\bm{u}) = \pd{\bm{u}}{t} = 0.
\]
For isoparametric geometric mappings (where the degree of the mapping matches the degree of the polynomial approximation) in two dimensions, the GCL is naturally enforced by the ``cross-product'' form, noting that the scaled metric terms $G^k_{ij}$ are exactly polynomials of degree $N$.  As a result, computing the metric terms exactly automatically enforces that both the continuous GCL (\ref{eq:gcl}) and the discrete GCL (\ref{eq:dgcl}) are satisfied.  However, the discrete GCL is not always maintained at the discrete level in 3D.  

In three dimensions, geometric terms are typically computed in ``cross-product'' form
\begin{align}
\LRs{\begin{array}{c}
G^k_{1i}\\
G^k_{2i}\\
G^k_{3i}\end{array}} &= \pd{\bm{x}}{\hat{x}_j}\times \pd{\bm{x}}{\hat{x}_k}, \qquad (i,j,k) = (1,2,3), \text{ cyclic}.
\end{align}
Note the abuse of notation here and in the sequel, the superscript $k$ refers to the element number and the subscript $k$ to the cyclic index. 
This formula can be used to compute the geometric terms exactly at volume and surface quadrature points.  However, because $\pd{\bm{x}}{\hat{x}_j},\pd{\bm{x}}{\hat{x}_k} \in P^{N-1}$, the geometric terms $G^k_{ij}$ are polynomials of degree $P^{2N-2}$.  The discrete GCL condition holds only if $G^k_{ij} \in P^{2N-2}$ are differentiated exactly; however, because applying $\bm{D}^j_N$ involves the $L^2$ projection, and because $G^k_{ij}$ and its $L^2$ projection onto degree $N$ polynomials can differ, the discrete GCL (\ref{eq:dgcl}) does not hold in general \cite{kopriva2006metric}.  

This can be remedied by using an alternative form of the geometric terms, which ensures that (\ref{eq:dgcl}) is satisfied a-priori \cite{thomas1979geometric, visbal2002use, kopriva2006metric}.  The geometric terms $G^k_{ij}$ can also be computed using a ``conservative curl'' form, where $G^k_{ij}$ are the image of the curl applied to some quantity
%We use the invariant curl form, which is given as
%\begin{align}
%J^k\bm{G}^k_{nj} = -\frac{1}{2}\bm{e}_j\cdot \hat{\Grad} \times \LRp{x_l\hat{\Grad}x_m - x_m\hat{\Grad}x_l}, \qquad j = 1,2,3, \quad (n,m,l) = (1,2,3), \text{ cyclic},
%\label{eq:conscurl}
%\end{align}
%where $\bm{e}_j$ is the $j$th canonical vector and $\hat{\Grad} \times $ denotes the curl with respect to the reference element coordinates. 
% Expanding and simplifying allows (\ref{eq:conscurl}) to be rewritten 
 as follows: % \cite{hindenlang2012explicit}:
\begin{align}
\LRs{\begin{array}{c}
G^k_{1i}\\
G^k_{2i}\\
G^k_{3i}\end{array}} =
\LRs{\begin{array}{c}
    {\left(         - \hat{\nabla} \times \left(x_3 \hat{\nabla} x_2\right)\right)}_i\\
    {\left(\phantom{-}\hat{\nabla} \times \left(x_3 \hat{\nabla} x_1\right)\right)}_i\\
    {\left(\phantom{-}\hat{\nabla} \times \left(x_1 \hat{\nabla} x_2\right)\right)}_i\end{array}},
\label{eq:conscurl}
\end{align}
where the subscript $i$ denotes the $i$th component of the vector quantity.
From (\ref{eq:conscurl}), it can be observed that, because the divergence of a curl vanishes, the continuous GCL condition (\ref{eq:gcl}) holds.  
%\begin{align}
%\hat{\Grad}\cdot \LRs{\begin{array}{c}
%J^k\bm{G}^k_{1i}\\
%J^k\bm{G}^k_{2i}\\
%J^k\bm{G}^k_{3i}\end{array}} = \sum_{j=1}^d \pd{}{\hat{x}_j}J^k\bm{G}^k_{ij} = 0.
%\end{align}
The central idea of \cite{visbal2002use, kopriva2006metric} is to use (\ref{eq:conscurl}), but to interpolate before applying the curl  %This approximates the geometric terms via
\begin{align}
\LRs{\begin{array}{c}
G^k_{1i}\\
G^k_{2i}\\
G^k_{3i}\end{array}} =
\LRs{\begin{array}{c}
    {\left(         - \hat{\nabla} \times I_N\left(x_3 \hat{\nabla} x_2\right)\right)}_i\\
    {\left(\phantom{-}\hat{\nabla} \times I_N\left(x_3 \hat{\nabla} x_1\right)\right)}_i\\
    {\left(\phantom{-}\hat{\nabla} \times I_N\left(x_1 \hat{\nabla} x_2\right)\right)}_i\end{array}},
\label{eq:iconscurl}
\end{align}
where $I_N$ denotes the degree $N$ polynomial interpolation operator. Since the geometric terms are still computed by applying a curl, the continuous GCL condition (\ref{eq:gcl}) is still satisfied. We shall also show that this approximation also satisfies the discrete GCL condition. 

We adopt a slight modification of (\ref{eq:iconscurl}) in this work which is tailored towards triangular and tetrahedral elements.   Because the geometric terms are computed by applying the curl, the geometric terms are approximated as degree $(N-1)$ polynomials rather than degree $N$ polynomials, which can reduce accuracy.   Instead, we approximate geometric terms by using the interpolation operator $I_{N+1}$ onto degree $(N+1)$ polynomials, then interpolating back to degree $N$ polynomials 
\begin{align}
\LRs{\begin{array}{c}
G^k_{1i}\\
G^k_{2i}\\
G^k_{3i}\end{array}} =
\LRs{\begin{array}{c}
    I_N{\left(         - \hat{\nabla} \times I_{N+1}\left(x_3 \hat{\nabla} x_2\right)\right)}_i\\
    I_N{\left(\phantom{-}\hat{\nabla} \times I_{N+1}\left(x_3 \hat{\nabla} x_1\right)\right)}_i\\
    I_N{\left(\phantom{-}\hat{\nabla} \times I_{N+1}\left(x_1 \hat{\nabla} x_2\right)\right)}_i\end{array}}.
\label{eq:iconscurl2}
\end{align}
For any $u \in P^N\LRp{\hat{D}}$, $\pd{u}{\hat{x}_i} \in P^{N-1}\LRp{\hat{D}}$ for $i = 1,2,3$.  Thus, the interpolation to degree $N$ polynomials is exact, since the derivatives of a degree $(N+1)$ polynomial are degree $N$ on triangles and tetrahedra.  % the approximation generated by (\ref{eq:iconscurl2}) is a degree $N$ polynomial.  

\begin{remark}
The accuracy of (\ref{eq:iconscurl2}) depends on the choice of interpolation points.  It is well known that interpolating at equispaced points can result in inaccurate polynomial approximations.  One can determine good interpolation point sets by optimizing over some measure of interpolation quality (such as the Lebesgue constant), and in practice, sets of interpolation points are pre-computed for some polynomial degrees $N = 1,\ldots, N_{\max}$ on the reference element and stored \cite{chen1996optimal, hesthaven1998electrostatics}, or explicitly computed as the image of equispaced points under an appropriately defined mapping \cite{blyth2006lobatto, warburton2006explicit, chan2015comparison}.  
\end{remark}

To prove that the construction (\ref{eq:iconscurl2}) satisfies Assumption~\ref{ass:norm}, we must assume that the interpolation points for a degree $N$ element include an appropriate number of points on each face.  We note that these assumptions exclude interpolation points which lie purely in the interior of an element, such as those introduced in \cite{williams2014symmetric, witherden2015identification}.  We can now show that the geometric terms satisfy all conditions necessary to guarantee entropy stability:
\begin{theorem}
%<<<<<<< HEAD
Let the mesh consist of triangles or tetrahedral elements which satisfy Assumption~\ref{ass:norm}, and let the interpolation points which define the degree $N$ interpolation operator be distributed such that $N^f_p$ points lie on each face. Then, the approximate geometric terms $\tilde{J^k\bm{G}^k_{ij}}$  and approximate scaled normals $\tilde{n_iJ^k_f}$ computed using (\ref{eq:iconscurl2}) and (\ref{eq:normalconsistency})) satisfy both the discrete GCL condition (\ref{eq:dgcl}) and Assumption~\ref{ass:norm}.  Additionally, the error in the approximation satisfies
%=======
%Let the mesh satisfy Assumption~\ref{ass:norm}, and let the interpolation points which define the interpolation operator $I_N$ be distributed such that $N^f_p$ points lie on each face. Then, the approximate geometric terms $\tilde{G^k_{ij}}$  and approximate scaled normals $\tilde{n_iJ^k_f}$ computed using (\ref{eq:iconscurl2}) and (\ref{eq:normalconsistency})) satisfy both the discrete GCL condition (\ref{eq:dgcl}) and Assumption~\ref{ass:norm}.  Additionally, the error in the approximation satisfies
%>>>>>>> 97186f466cf2432ab515d0a2d65cf4dd49645850
\[
%\sqrt{\sum_{k}\sum_{ij=1}^d\nor{{J^k\bm{G}^k_{ij}}-\tilde{J^k\bm{G}^k_{ij}}}_{L^2\LRp{\hat{D}}}^2} \leq C_N h^{N+2} \nor{\cdot}_{W^{N+2,2}\LRp{D^k}}.
\nor{{G^k_{ij}}-\tilde{G^k_{ij}}}_{L^2\LRp{\Omega}}^2  \leq C_N \LRb{\Omega} h^{N+2} \sqrt{\sum_{i=1}^d \sum_k \nor{\bm{r}_i}^2_{W^{N+2,2}\LRp{D^k}}}.
\]
\label{thm:koprivagcl}
\end{theorem}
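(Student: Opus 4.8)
The plan is to treat the three assertions separately, all resting on one structural observation: on a triangle or tetrahedron the construction (\ref{eq:iconscurl2}) returns geometric terms that are \emph{exactly} the curl of a degree $(N+1)$ polynomial vector field. Writing $\bm b_i$ for the (exact) argument of the curl defining row $i$ of (\ref{eq:iconscurl2}) --- e.g. $\bm b_1 = -x_3\hat{\nabla}x_2$, $\bm b_2 = x_3\hat{\nabla}x_1$, $\bm b_3 = x_1\hat{\nabla}x_2$ --- the interpolant $I_{N+1}\bm b_i$ lies in $P^{N+1}(\hat D)$, so $\hat{\nabla}\times I_{N+1}\bm b_i\in P^N(\hat D)$ and the outer operator $I_N$ acts as the identity. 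Hence $\tilde{G^k_{ij}} = \LRp{\hat{\nabla}\times I_{N+1}\bm b_i}_j$ is a genuine curl in $P^N$. Since each $\tilde{G^k_{ij}}$ is a degree $N$ polynomial, (\ref{eq:dnvqvf}) shows that $\bm D^j_N$ reduces to exact polynomial differentiation on it, so $\sum_{j=1}^d\bm D^j_N\bm G^k_{ij}$ equals the quadrature-point evaluation of the true divergence $\hat{\nabla}\cdot\LRp{\hat{\nabla}\times I_{N+1}\bm b_i}=0$; this is precisely the discrete GCL (\ref{eq:dgcl}), and therefore $\bm Q^j_k\bm 1 = 0$.

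\emph{Normal consistency and watertightness.} The delicate step, and the one I expect to be the main obstacle, is verifying Assumption~\ref{ass:norm}: the scaled normals defined through (\ref{eq:normalconsistency}) must be single-valued up to sign across a shared face, i.e. (\ref{eq:normalsign}) must hold. The key identity is that on a planar reference face with constant unit normal $\hat{\bm n}$ the scaled normal
\[
\sum_{j=1}^d G^k_{ij}\hat n_j\hat J_f = \hat J_f\,\hat{\bm n}\cdot\LRp{\hat{\nabla}\times I_{N+1}\bm b_i}
\]
is the surface curl of the \emph{tangential} part of $I_{N+1}\bm b_i$, and therefore involves only components of $\bm b_i$ tangent to the face and only derivatives taken along the face. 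Because the tangential part of $\bm b_i = \pm x_a\hat{\nabla}x_b$ (for appropriate coordinate indices $a,b$) is built from the value of $x_a$ and the tangential derivatives of $x_b$, it is determined entirely by the trace of the coordinate maps on that face; and the face-node distribution assumed in the theorem guarantees that interpolation commutes with restriction to a face, so the same holds for $I_{N+1}\bm b_i$. Since neighbouring elements share the face and its mapping on a watertight mesh, the two computed scaled normals coincide up to sign, giving (\ref{eq:normalsign}); (\ref{eq:normalconsistency}) then holds by construction of the approximate normals.

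\emph{Error estimate.} Finally, the exact geometric terms in cross-product form agree with the continuous curl $\hat{\nabla}\times\bm b_i$, so subtracting gives
\[
G^k_{ij} - \tilde{G^k_{ij}} = \LRp{\hat{\nabla}\times\LRp{\bm b_i - I_{N+1}\bm b_i}}_j,
\]
i.e. the metric error is the curl of the degree $(N+1)$ interpolation error. Standard interpolation estimates on each $D^k$ then bound $\nor{\bm b_i - I_{N+1}\bm b_i}$ in the relevant one-derivative norm by a high-order Sobolev norm of the mapping data $\bm r_i$; summing these element-wise bounds over $i$ and $k$ and applying Cauchy--Schwarz across elements produces the volume factor $\LRb{\Omega}$ together with the broken Sobolev norm $\sqrt{\sum_{i}\sum_k\nor{\bm r_i}^2_{W^{N+2,2}\LRp{D^k}}}$ appearing on the right-hand side. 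This part is routine once the curl-of-interpolation-error representation is established; the only care required is in tracking the geometric constants and invoking quasi-uniformity of the mesh.
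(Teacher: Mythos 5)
Your proposal is correct and follows essentially the same route as the paper's proof: exactness of the outer interpolation $I_N$ on simplices (since the curl of a $P^{N+1}$ field lies in $P^N$), so that $\tilde{G^k_{ij}}$ is a genuine polynomial curl and the discrete GCL follows from (\ref{eq:dnvqvf}) together with the vanishing divergence of a curl; normal consistency via the observation that the scaled normals depend only on tangential derivatives of face traces, which coincide across a watertight mesh when the face nodes are shared; and the error bound via the representation of $G^k_{ij}-\tilde{G^k_{ij}}$ as derivatives of the interpolation error, followed by Bramble--Hilbert-type estimates, a scaling argument, and summation over elements. The only cosmetic difference is that you make the surface-curl identity for the normals explicit where the paper defers to the literature, and you cite ``standard interpolation estimates'' where the paper invokes the Bramble--Hilbert lemma on the reference element.
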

\begin{proof}
  The satisfaction of (\ref{eq:dgcl}) relies on the fact that $\tilde{G^k_{ij}}$ is a degree $N$ polynomial and is equal to its own $L^2$ projection.  Let $\tilde{\bm{G}^k_{ij}}$ denote the polynomial coefficients of $\tilde{G^k_{ij}}$.  
Then, applying $\bm{D}^j_N$ to evaluations of $\tilde{G^k_{ij}}$ at volume and surface quadrature points and using (\ref{eq:dnvqvf}), we have
\[
\sum_{j=1}^d\bm{D}^j_N \LRs{
\begin{array}{c}
\bm{V}_q\\
\bm{V}_f
\end{array}}
\tilde{\bm{G}^k_{ij}} = 
\sum_{j=1}^d \LRs{\begin{array}{c}
\bm{V}_q\bm{D}_i\tilde{\bm{G}^k_{ij}} \\
0
\end{array}}.
\]
The entries of $\bm{V}_q\bm{D}_i\tilde{\bm{G}^k_{ij}}$ correspond to values of the derivatives of $\tilde{G^k_{ij}}$ evaluated at quadrature points.  Since 
\[
\sum_{j=1}^d\pd{}{\hat{x}_j}\tilde{G^k_{ij}} = 0
\]
by construction using (\ref{eq:iconscurl2}), $\sum_{j=1,\ldots,d} \bm{V}_q\bm{D}_i\tilde{\bm{G}^k_{ij}} = \bm{0}$ as well.  

Equation (\ref{eq:normalconsistency}) of Assumption~\ref{ass:norm} is satisfied by directly constructing the scaled normals $\tilde{\bm{n}}J^k_f$ using values of $\tilde{G^k_{ij}}$ at quadrature points.  We must now prove that the construction of $\tilde{\bm{n}}J^k_f$ implies that Equation (\ref{eq:normalsign}) holds. This is not immediately clear; since the normals are constructed from the approximate geometric terms and the formula (\ref{eq:normalconsistency}), it is not guaranteed that $\tilde{\bm{n}}^+J^{k,+}_f = -\tilde{\bm{n}}J^k_f$ will hold across a shared face.  However, the scaled normal vectors involve only nodal values on the shared face because the normals are computed in terms of the tangential reference derivatives \cite{kopriva2006metric}.  Thus, assuming a watertight mesh, the interpolation nodes on two neighboring elements will coincide for a shared face $f$, such that the trace of $\tilde{G^k_{ij}}$ from either neighboring element will be the same lower-dimensional polynomial on $f$.  This is sufficient to ensure that the scaled normal vectors $\tilde{\bm{n}}^+J^{k,+}_f, \tilde{\bm{n}}J^k_f$ will be equal and opposite.  

The local $L^2$ error $\nor{{G^k_{ij}}-\tilde{G^k_{ij}}}_{L^2\LRp{D^k}}$ can be bounded by noting that, since the error $G^k_{ij}-\tilde{G^k_{ij}}$ consists of linear combinations of derivatives of the interpolation error $\bm{r}_i-I_{N+1}\bm{r}_i$, it can be bounded by the $H^1$-seminorm of the latter quantity
\begin{align*}
\nor{{G^k_{ij}}-\tilde{G^k_{ij}}}_{L^2\LRp{D^k}} &\leq %\nor{\sqrt{J^k}}_{L^2\LRp{\hat{D}}} \nor{\hat{\Grad}\times \LRp{\bm{r}_i-I_{N+1}\bm{r}_i}}_{L^2\LRp{\hat{D}}}\\
%\leq 
 \sum_{i=1}^d C_1 \nor{\sqrt{J^k}}_{L^2\LRp{\hat{D}}} \LRb{\LRp{\bm{r}_i-I_{N+1}\bm{r}_i}}_{H^1\LRp{\hat{D}}}\\
 &\leq  \sum_{i=1}^d \tilde{C}_N \nor{\sqrt{J^k}}_{L^2\LRp{\hat{D}}} \LRb{\bm{r}_i}_{W^{N+2,2}\LRp{\hat{D}}},
\end{align*}
where we have used the Bramble-Hilbert lemma \cite{brenner2007mathematical} on the reference element in the last step.  Since it is applied on the reference element $\hat{D}$ rather than the physical element $D^k$, the constant $\tilde{C}_N$ depends on the reference element and order of approximation, but not the mesh size $h$.  A scaling argument for quasi-uniform meshes then yields that
\begin{align}
\LRb{\bm{r}_i}_{W^{N+2,2}\LRp{\hat{D}}} \leq C_2 h^{N+2}\nor{\bm{r}_i}_{W^{N+2,2}\LRp{D^k}}.
\label{eq:localgeoerr}
\end{align}
The global estimate results from squaring (\ref{eq:localgeoerr}), summing over all elements and using $\nor{\sqrt{J^k}}^2_{L^2\LRp{\hat{D}}} = \LRb{D^k}$.
%\[
%%\sqrt{\sum_k \nor{{J^k\bm{G}^k_{ij}}-\tilde{J^k\bm{G}^k_{ij}}}_{L^2\LRp{D^k}}^2 }
%\nor{{J^k\bm{G}^k_{ij}}-\tilde{J^k\bm{G}^k_{ij}}}_{L^2\LRp{\Omega}}^2  \leq C_N \LRb{\Omega} h^{N+2} \sqrt{\sum_k \nor{\bm{r}}_2_{W^{N+2,2}\LRp{D^k}}}.
%\]
\end{proof}

%<<<<<<< HEAD
\begin{remark}
It should be pointed out that this approach does not work on hexahedral elements.  This is due to the fact that the natural polynomial space on hexahedral elements is the tensor product space $Q^N\LRp{\widehat{D}}$
\[
Q^N\LRp{\widehat{D}} = \LRc{\hat{x}_1^{i_1}\ldots\hat{x}_d^{i_d}, \quad \hat{\bm{x}} \in \widehat{D}, \quad 0 \leq i_k \leq N}.
\]
For $u\in Q^{N+1}\LRp{\hat{D}}$, $\pd{u}{\hat{x}_i}$ is at most degree $N$ in the coordinate $\hat{x}_i$, but can remain a polynomial of degree $N+1$ in all other coordinates.  Thus, interpolating from degree $N+1$ to degree $N$ polynomials in (\ref{eq:iconscurl2}) introduces aliasing errors and is no longer exact.  The result of (\ref{eq:iconscurl2}) is no longer the image of a curl, and thus does not satisfy the discrete GCL by construction.  
\end{remark}

%We briefly outline how to compute $\tilde{J^k\bm{G}^k_{ij}}$ in three dimensions.  Let $\LRc{\hat{\bm{x}}^N_i}_{i=1}^{N_p}$ denote the set of degree $N$ interpolation points, and let $\ell^N_i(\hat{\bm{x}})$ denote the $i$th degree $N$ Lagrange basis function on the reference element.  We define interpolation matrices $\bm{V}_N^{N+1}$ and $\bm{V}^N_{N+1}$ between degree $N$ and $N+1$ polynomials such that 
%=======
We briefly outline how to compute $\tilde{G^k_{ij}}$ in three dimensions.  Let $\LRc{\hat{\bm{x}}^N_i}_{i=1}^{N_p}$ denote the set of degree $N$ interpolation points, and let $\ell^N_i(\hat{\bm{x}})$ denote the $i$th degree $N$ Lagrange basis function on the reference element.  We define interpolation matrices $\bm{V}_N^{N+1}$ and $\bm{V}^N_{N+1}$ between degree $N$ and $N+1$ polynomials such that 
%>>>>>>> 97186f466cf2432ab515d0a2d65cf4dd49645850
\begin{align}
\LRp{\bm{V}_N^{N+1}}_{ij} &= \ell^N_j(\hat{\bm{x}}^{N+1}_i) , \qquad 1\leq i \leq N_p, \qquad \qquad 1\leq i \leq (N+1)_p\\
\LRp{\bm{V}^N_{N+1}}_{ij} &= \ell^{N+1}_j(\hat{\bm{x}}^{N}_i) , \qquad 1\leq i \leq (N+1)_p, \qquad \qquad 1\leq i \leq N_p, \nonumber
\end{align}
where $N_p, (N+1)_p$ denotes the number of interpolation points for degree $N$ and $N+1$ polynomials, respectively.  Next, let $\bm{x}_1,\bm{x}_2,\bm{x}_3$ denote vectors containing $x_1,x_2,x_3$ coordinates of degree $N$ interpolation points on a curved physical element $D^k$, and let $\tilde{\bm{x}}_1,\tilde{\bm{x}}_2,\tilde{\bm{x}}_3$ denote their evaluation at degree $(N+1)$ interpolation points
\begin{align}
\tilde{\bm{x}}_1 = \bm{V}_N^{N+1}\bm{x}_1, \qquad \tilde{\bm{x}}_2 = \bm{V}_N^{N+1}\bm{x}_2, \qquad \tilde{\bm{x}}_3 = \bm{V}_N^{N+1}\bm{x}_3.
\end{align}
Let $\tilde{\bm{D}}^{N+1}_i$ denote the nodal differentiation matrix of degree $N+1$ with respect to the $i$th coordinate direction.  The geometric factors are computed as follows:
\begin{align}
\bm{G^k}_{11} &= \phantom{-}\bm{V}_{N+1}^N\LRp{\tilde{\bm{D}}^{N+1}_3 \LRp{\LRp{ \tilde{\bm{D}}^{N+1}_2 \tilde{\bm{x}}_2} \circ\tilde{\bm{x}}_3} - \tilde{\bm{D}}^{N+1}_2 \LRp{ \LRp{\tilde{\bm{D}}^{N+1}_3\tilde{\bm{x}}_2} \circ\tilde{\bm{x}}_3}} \\
\bm{G^k}_{12} &= \phantom{-}\bm{V}_{N+1}^N\LRp{\tilde{\bm{D}}^{N+1}_1 \LRp{ \LRp{\tilde{\bm{D}}^{N+1}_3\tilde{\bm{x}}_2} \circ\tilde{\bm{x}}_3} - \tilde{\bm{D}}^{N+1}_3 \LRp{ \LRp{\tilde{\bm{D}}^{N+1}_1\tilde{\bm{x}}_2 }\circ\tilde{\bm{x}}_3}} \nonumber\\
\bm{G^k}_{13} &= \phantom{-}\bm{V}_{N+1}^N{\LRp{\tilde{\bm{D}}^{N+1}_2 \LRp{\LRp{ \tilde{\bm{D}}^{N+1}_1\tilde{\bm{x}}_2} \circ\tilde{\bm{x}}_3} - \tilde{\bm{D}}^{N+1}_1 \LRp{\LRp{ \tilde{\bm{D}}^{N+1}_2\tilde{\bm{x}}_2} \circ\tilde{\bm{x}}_3}}} \nonumber\\
\bm{G^k}_{21} &= -\bm{V}_{N+1}^N\LRp{\tilde{\bm{D}}^{N+1}_3 \LRp{\LRp{ \tilde{\bm{D}}^{N+1}_2 \tilde{\bm{x}}_1} \circ\tilde{\bm{x}}_3} - \tilde{\bm{D}}^{N+1}_2 \LRp{\LRp{ \tilde{\bm{D}}^{N+1}_3\tilde{\bm{x}}_1} \circ\tilde{\bm{x}}_3}} \nonumber \\
\bm{G^k}_{22} &= -\bm{V}_{N+1}^N\LRp{\tilde{\bm{D}}^{N+1}_1 \LRp{ \LRp{\tilde{\bm{D}}^{N+1}_3\tilde{\bm{x}}_1} \circ\tilde{\bm{x}}_3} - \tilde{\bm{D}}^{N+1}_3 \LRp{ \LRp{\tilde{\bm{D}}^{N+1}_1\tilde{\bm{x}}_1 }\circ\tilde{\bm{x}}_3}} \nonumber\\
\bm{G^k}_{23} &= -\bm{V}_{N+1}^N\LRp{\tilde{\bm{D}}^{N+1}_2 \LRp{ \LRp{\tilde{\bm{D}}^{N+1}_1\tilde{\bm{x}}_1} \circ\tilde{\bm{x}}_3} - \tilde{\bm{D}}^{N+1}_1 \LRp{ \LRp{\tilde{\bm{D}}^{N+1}_2\tilde{\bm{x}}_1 }\circ\tilde{\bm{x}}_3}} \nonumber\\
\bm{G^k}_{31} &= -\bm{V}_{N+1}^N\LRp{\tilde{\bm{D}}^{N+1}_3 \LRp{ \LRp{\tilde{\bm{D}}^{N+1}_2 \tilde{\bm{x}}_2} \circ\tilde{\bm{x}}_1} - \tilde{\bm{D}}^{N+1}_2 \LRp{ \LRp{\tilde{\bm{D}}^{N+1}_3\tilde{\bm{x}}_2} \circ\tilde{\bm{x}}_1}} \nonumber\\
\bm{G^k}_{32} &= -\bm{V}_{N+1}^N\LRp{\tilde{\bm{D}}^{N+1}_1 \LRp{ \LRp{\tilde{\bm{D}}^{N+1}_3\tilde{\bm{x}}_2} \circ\tilde{\bm{x}}_1} - \tilde{\bm{D}}^{N+1}_3 \LRp{  \LRp{\tilde{\bm{D}}^{N+1}_1\tilde{\bm{x}}_2 }\circ\tilde{\bm{x}}_1}} \nonumber\\
\bm{G^k}_{33} &= -\bm{V}_{N+1}^N\LRp{\tilde{\bm{D}}^{N+1}_2 \LRp{ \LRp{\tilde{\bm{D}}^{N+1}_1\tilde{\bm{x}}_2} \circ\tilde{\bm{x}}_1} - \tilde{\bm{D}}^{N+1}_1 \LRp{  \LRp{\tilde{\bm{D}}^{N+1}_2\tilde{\bm{x}}_2 }\circ\tilde{\bm{x}}_1}}. \nonumber
\end{align}
\begin{remark}

We note that the discrete GCL (\ref{eq:dgcl}) can also be enforced directly through a local constrained minimization problem \cite{fernandez2016simultaneous, crean2018entropy}, which yields a solution in terms of a pseudo-inverse.  However, we have not found a straightforward way to simultaneously enforce both the discrete GCL condition (\ref{eq:dgcl}) and Assumption~\ref{ass:norm} using this approach.
\end{remark}

\section{Numerical experiments}
\label{sec:num}

In this section, we present numerical results which verify the theoretical results in this work.  We first verify that the weight-adjusted $L^2$ projection $\tilde{\Pi}^k_N$ and the GCL-satisfying geometric factors $G^k_{ij}$ obey the error estimates in Theorem~\ref{thm:superconverge} and Theorem~\ref{thm:koprivagcl}.  Next, we verify the semi-discrete entropy conservation, primary conservation, and accuracy of the proposed high order accurate methods for the compressible Euler equations on curved meshes in two and three dimensions.  For all curved meshes, we utilize the low storage weight-adjusted formulation (\ref{eq:dgform2}).  

In choosing the timestep $dt$, we follow \cite{chan2015gpu} and set 
\begin{equation}
dt = \min_{k} {\rm CFL} \frac{\nor{J}_{L^{\infty}\LRp{D^k}}}{C_N\nor{J^f}_{L^{\infty}\LRp{\partial D^k}}},
\end{equation}
where $C_N$ is the $O(N^2)$ order-dependent constant in the surface trace inequality \cite{chan2015gpu} and ${\rm CFL}$ is a user-chosen constant.  All numerical experiments in this work utilize the five-stage fourth order low storage Runge--Kutta (LSRK-45) time-stepper \cite{carpenter1994fourth}.

\subsection{Accuracy of weight-adjusted projection and geometric terms}

In this section, we verify Theorems~\ref{thm:superconverge} and \ref{thm:koprivagcl} concerning the accuracy of the weight-adjusted projection and modified construction of geometric terms satisfying the discrete geometric conservation law.  Figure~\ref{fig:superconverge} shows $L^2$ errors for both the standard $L^2$ projection (\ref{eq:l2curv}) and the weight-adjusted projection (\ref{eq:wadgproj}) for a series of warped meshes of degree $N = 4$.  Errors are estimated using degree $2N+1$ quadratures for triangles and tetrahedra \cite{xiao2010quadrature}.  We compute $L^2$ errors for both smooth and discontinuous functions
\[
f(\bm{x}) = e^{x_1+x_2}\sin\LRp{{\pi x_1}}\sin\LRp{{\pi x_2}}, \qquad g(\bm{x}) = f(\bm{x}) + H(x_1+x_2-\sin(\pi x_1)),
\]
where $H$ is the Heaviside function.  For the smooth function $f(\bm{x})$, we observe that the difference between the $L^2$ and weight-adjusted projections indeed converges at a rate of $O\LRp{h^{N+2}}$ as predicted by Theorem~\ref{fig:superconverge}, such that the $L^2$ errors for each projection appear identical.  The $L^2$ errors for the $L^2$ and weight-adjusted projections of the discontinuous function $g(\bm{x})$ are also virtually identical.  However, the difference beteween the $L^2$ and weight-adjusted projection converges faster than estimated by Theorem~\ref{fig:superconverge}, with the $L^2$ error converging as $O(h^{1/2})$ and the difference converging as $O(h^{2+1/2})$. 

%, despite the fact that Theorem~\ref{thm:superconverge} only holds for functions with sufficient regularity.  Moreover, the difference between the two projections appears to converge even more rapidly for the discontinuous function $g(\bm{x})$, with the $L^2$ error converging as $O(h^{1/2})$ and the difference converging as $O(h^{2+1/2})$. 

\begin{figure}
\centering
\begingroup
\captionsetup[subfigure]{width=.275\textwidth}
\subfloat[Warped curvilinear mesh]{\raisebox{1.75em}{\includegraphics[width=.25\textwidth]{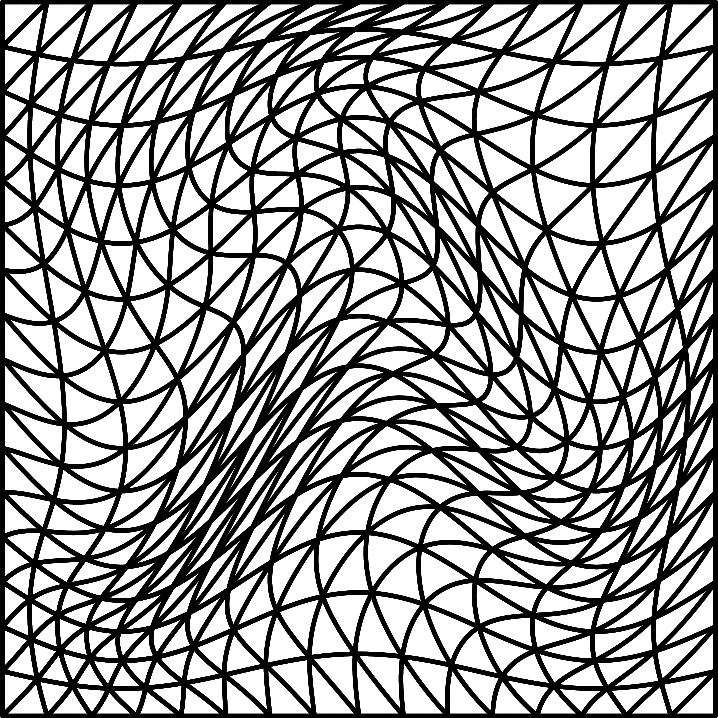}}}
\subfloat[Smooth function]{
\begin{tikzpicture}
\begin{loglogaxis}[
    legend cell align=left,
    legend style={legend pos=south east, font=\tiny},
    width=.37\textwidth,    
    xlabel={Mesh size $h$},
    ylabel={$L^2$ error}, 
     ymin=1e-11, ymax=1e-1,    
    grid style=dashed,
] 
\addplot[color=blue,mark=*,semithick, mark options={solid,fill=markercolor}]
coordinates{(0.5,0.0268241)(0.25,0.00198388)(0.125,7.29929e-05)(0.0625,2.43094e-06)(0.03125,7.71592e-08)};
\logLogSlopeTriangleFlip{0.3}{0.15}{0.48}{5}{blue}
\addplot[color=red,mark=x,dashed,semithick, mark options={solid,fill=markercolor}]
coordinates{(0.5,0.0269391)(0.25,0.00197894)(0.125,7.29563e-05)(0.0625,2.43063e-06)(0.03125,7.71567e-08)};
\addplot[color=black,mark=triangle*,semithick, mark options={solid,fill=markercolor}]
coordinates{(0.5,0.00237193)(0.25,5.41179e-05)(0.125,7.6934e-07)(0.0625,1.18362e-08)(0.03125,1.84801e-10)};
\logLogSlopeTriangleFlip{0.3}{0.15}{0.225}{6}{black}

\legend{$L^2$ projection,Weight-adjusted,Difference}
%\legend{Uniform, Optimal, Smoothed}
\end{loglogaxis}
\end{tikzpicture}
}
\subfloat[Discontinuous function]{
\begin{tikzpicture}
\begin{loglogaxis}[
    legend cell align=left,
    legend style={legend pos=south east, font=\tiny},
    width=.37\textwidth,
    xlabel={Mesh size $h$},
%     ymin=1e-7, ymax=1e1,
%    ylabel={$L^2$ error}, 
    grid style=dashed,
] 

\addplot[color=blue,mark=*,semithick, mark options={solid,fill=markercolor}]
coordinates{(0.5,0.323258)(0.25,0.227822)(0.125,0.160387)(0.0625,0.114753)(0.03125,0.0792918)};
\logLogSlopeTriangleFlip{0.4}{0.25}{0.875}{.5}{blue}
\addplot[color=red,mark=x,dashed,semithick, mark options={solid,fill=markercolor}]
coordinates{(0.5,0.324117)(0.25,0.227927)(0.125,0.160411)(0.0625,0.114756)(0.03125,0.0792924)};
\addplot[color=black,mark=triangle*,semithick, mark options={solid,fill=markercolor}]
coordinates{(0.5,0.00466574)(0.25,0.000804201)(0.125,0.000182886)(0.0625,2.96701e-05)(0.03125,5.01885e-06)};
\logLogSlopeTriangleFlip{0.3}{0.15}{0.18}{2.5}{black}

%\legend{$L^2$ projection,Weight-adjusted, Difference}
%\legend{Uniform, Optimal, Smoothed}
\end{loglogaxis}
\end{tikzpicture}
}
\endgroup
\caption{$L^2$ errors in approximating both smooth and discontinuous functions using $L^2$ and weight-adjusted projections on a curved mesh.  The approximation order is $N=4$, and a degree $2N$ quadrature rule is used to compute integrals over the reference triangle. }
\label{fig:superconverge}
\end{figure}

We next compare the approximation of the geometric factors on a curved three-dimensional mesh.  We generate a sequence of quasi-uniform unstructured tetrahedral meshes using GMSH \cite{geuzaine2009gmsh} and construct a curvilinear mesh from the distorted coordinates $\tilde{\bm{x}} = \bm{x} + \frac{1}{8}\cos\LRp{\frac{\pi}{2}x_1}\cos\LRp{\frac{\pi}{2}x_2}\cos\LRp{\frac{\pi}{2}x_3}$.
We then compute the $L^2$ error in approximating geometric terms for each element $D^k$ by computing $G^k_{ij}-\tilde{G^k_{ij}}$ at quadrature points.  
%\[
%\sqrt{\sum_{k}\sum_{ij=1}^d \nor{G^k_{ij} - \tilde{G^k_{ij}}}^2_{L^2\LRp{\hat{D}}}}.  
%\]
We estimate the mesh size as $h = \max_k \nor{J^k/J^k_f}_{L^{\infty}}$, since $J^k = O(h^d)$ and $J^k_f = O(h^{d-1})$ in $d$ dimensions \cite{chan2015gpu}.  Figure~\ref{fig:geomerr} shows errors for an $N=3$ and $N=4$ mesh.  We refer to the construction of approximate geometric terms $\tilde{G^k_{ij}}$ introduced in \cite{kopriva2006metric, hindenlang2012explicit} as ``Geo-$N$'', since the interpolation is performed using a degree $N$ interpolation operator.  We refer to the construction of $\tilde{G^k_{ij}}$ in (\ref{eq:iconscurl2}) and Theorem~\ref{thm:koprivagcl} as ``Geo-$(N+1)$'', since the main interpolation step is performed on degree $(N+1)$ polynomials instead.  It can be observed that the Geo-N scheme converges at a rate of $O(h^{N+1})$, while the Geo-(N+1) scheme converges at a rate of $O(h^{N+2})$.  We note that the error in both the Geo-$N$ and Geo-$(N+1)$ approximations of $\tilde{G^k_{ij}}$ converge at the same rate or faster than the best approximation error.  
%\note{Optimal rates are observed for $N=5$, geometry order 3 by Hindenlang, but this is be because $G^k_{ij} \in Q^{2N_{\rm geo}-2} = Q^4$, so it should not be necessary to enforce the GCL in that situation. Note sensitivity of nonlinear solvers to geometry error \cite{toulorge2016optimizing}}.

\begin{figure}
\centering
\subfloat[$N=3$]{
\begin{tikzpicture}
\begin{loglogaxis}[
    legend cell align=left,
    legend style={legend pos=south east, font=\tiny},
    width=.45\textwidth,    
    xlabel={Mesh size $h$},
    ylabel={$L^2$ error}, 
%     ymin=1e-10, ymax=1e-1,    
    grid style=dashed,
] 

\addplot[color=blue,mark=*,semithick, mark options={solid,fill=markercolor}]
coordinates{(0.67574,0.00231891)(0.374388,7.21634e-05)(0.16654,3.2502e-06)(0.0867431,1.31183e-07)};
\addplot[color=magenta,dashed,semithick, mark options={solid,fill=markercolor}]
coordinates{(0.67574,0.00231891)(0.374388,0.000121059)(0.16654,2.10855e-06)(0.0867431,8.08278e-08)};
\addplot[color=red,mark=square*,semithick, mark options={solid,fill=markercolor}]
coordinates{(0.67574,0.00635039)(0.374388,0.000542466)(0.16654,4.85003e-05)(0.0867431,4.03792e-06)};
\addplot[color=black,dashed,semithick, mark options={solid,fill=markercolor}]
coordinates{(0.67574,0.00952559)(0.374388,0.000897556)(0.16654,3.51441e-05)(0.0867431,2.58651e-06)};

%coordinates{(0.5,0.00237193)(0.25,5.41179e-05)(0.125,7.6934e-07)(0.0625,1.18362e-08)(0.03125,1.84801e-10)};
%\logLogSlopeTriangleFlip{0.3}{0.15}{0.225}{6}{black}

\legend{Geo-$(N+1)$, $h^{N+2}$, Geo-$N$, $h^{N+1}$}
%\legend{Uniform, Optimal, Smoothed}
\end{loglogaxis}
\end{tikzpicture}
}
\subfloat[$N=4$]{
\begin{tikzpicture}
\begin{loglogaxis}[
    legend cell align=left,
    legend style={legend pos=south east, font=\tiny},
    width=.45\textwidth,
    xlabel={Mesh size $h$},
%         ymin=1e-10, ymax=1e-1,    
%     ymin=1e-7, ymax=1e1,
%    ylabel={$L^2$ error}, 
    grid style=dashed,
] 

\addplot[color=blue,mark=*,semithick, mark options={solid,fill=markercolor}]
coordinates{(0.673351,0.000766195)(0.373777,1.44835e-05)(0.166655,2.91207e-07)(0.0867501,5.69685e-09)};
\addplot[color=magenta,dashed,semithick, mark options={solid,fill=markercolor}]
coordinates{(0.673351,0.000957743)(0.373777,2.80204e-05)(0.166655,2.20143e-07)(0.0867501,4.37946e-09)};
\addplot[color=red,mark=square*,semithick, mark options={solid,fill=markercolor}]
coordinates{(0.673351,0.00231677)(0.373777,0.000127702)(0.166655,5.3742e-06)(0.0867501,2.11781e-07)};
\addplot[color=black,dashed,semithick, mark options={solid,fill=markercolor}]
coordinates{(0.673351,0.00405434)(0.373777,0.000213685)(0.166655,3.7653e-06)(0.0867501,1.43901e-07)};

%coordinates{(0.5,0.00237193)(0.25,5.41179e-05)(0.125,7.6934e-07)(0.0625,1.18362e-08)(0.03125,1.84801e-10)};
%\logLogSlopeTriangleFlip{0.3}{0.15}{0.225}{6}{black}

\legend{Geo-$(N+1)$, $h^{N+2}$, Geo-$N$, $h^{N+1}$}
\end{loglogaxis}
\end{tikzpicture}
}
\caption{$L^2$ errors in the approximation of metric terms $G^k_{ij}$ with metric terms $\tilde{G^k_{ij}}$ satisfying the discrete GCL condition (\ref{eq:dgcl}) and Assumption~\ref{ass:norm}.}
\label{fig:geomerr}
\end{figure}
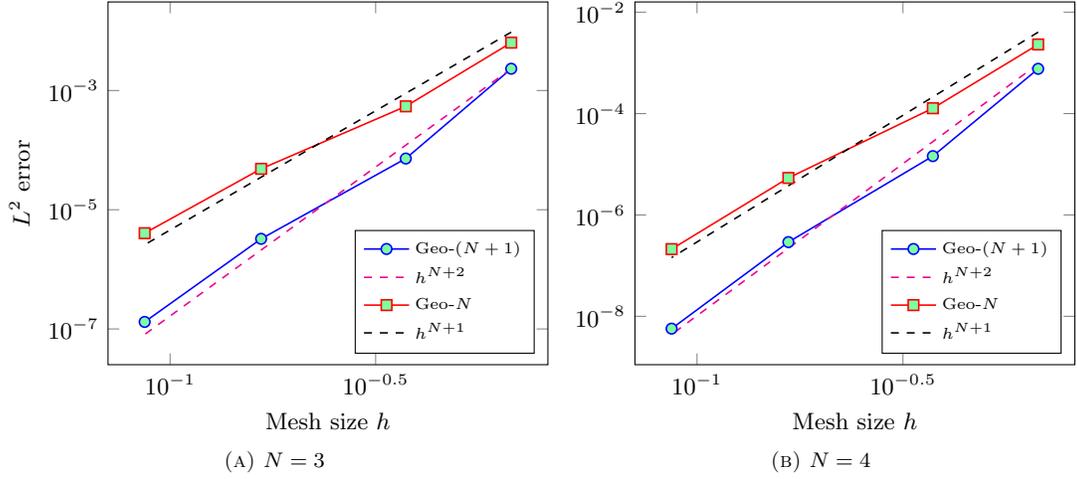

% N = 3
\subsection{Two-dimensional compressible Euler equations}

The compressible Euler equations in two dimensions are given as follows:
\begin{align}
\pd{\rho}{t} + \pd{\LRp{\rho u}}{x_1} + \pd{\LRp{\rho v}}{x_2} &= 0,\\
\pd{\rho u}{t} + \pd{\LRp{\rho u^2 + p }}{x_1} + \pd{\LRp{\rho uv}}{x_2} &= 0,\nonumber\\
\pd{\rho v}{t} + \pd{\LRp{\rho uv}}{x_1} + \pd{\LRp{\rho v^2 + p }}{x_2} &= 0,\nonumber\\
\pd{E}{t} + \pd{\LRp{u(E+p)}}{x_1} + \pd{\LRp{v(E+p)}}{x_2}&= 0.\nonumber
\end{align}
In two dimensions, the pressure is $p = (\gamma-1)\LRp{E - \frac{1}{2}\rho (u^2+v^2)}$, and the specific internal energy is $\rho e = E - \frac{1}{2}\rho (u^2+v^2)$.  

The choice of convex entropy for the Euler equations is non-unique \cite{harten1983symmetric}.  However, a unique entropy can be chosen by restricting to choices of entropy variables which symmetrize the viscous heat conduction term in the compressible Navier-Stokes equations \cite{hughes1986new}.  This leads to $U(\bm{u})$ of the form
\begin{equation}
U(\bm{u}) = -\frac{\rho s}{\gamma-1},
\label{eq:entropy2d}
\end{equation}
where $s = \log\LRp{\frac{p}{\rho^\gamma}}$ is the physical specific entropy. The entropy variables in two dimensions are 
\begin{align}
v_1 = \frac{\rho e (\gamma + 1 - s) - E}{\rho e}, \qquad v_2 = \frac{\rho u}{\rho e}, \qquad v_3 = \frac{\rho v}{\rho e}, \qquad v_4 = -\frac{\rho}{\rho e}.
\end{align}
The conservation variables in terms of the entropy variables are given by
\begin{equation}
\rho = -(\rho e) v_4, \qquad \rho u = (\rho e) v_2, \qquad \rho v = (\rho e) v_3, \qquad E = (\rho e)\LRp{1 - \frac{{v_2^2+v_3^2}}{2 v_4}},
\end{equation}
where $\rho e$ and $s$ in terms of the entropy variables are 
\begin{equation}
\rho e = \LRp{\frac{(\gamma-1)}{\LRp{-v_4}^{\gamma}}}^{1/(\gamma-1)}e^{\frac{-s}{\gamma-1}}, \qquad s = \gamma - v_1 + \frac{{v_2^2+v_3^2}}{2v_4}.
\end{equation}
The entropy conservative numerical fluxes for the two-dimensional compressible Euler equations are given by Chandrashekar \cite{chandrashekar2013kinetic}
\begin{align}
&f^1_{1,S}(\bm{u}_L,\bm{u}_R) = \avg{\rho}^{\log} \avg{u},& &f^1_{2,S}(\bm{u}_L,\bm{u}_R) = \avg{\rho}^{\log} \avg{v},&\\
&f^2_{1,S}(\bm{u}_L,\bm{u}_R) = f^1_{1,S} \avg{u} + p_{\rm avg},&  &f^2_{2,S}(\bm{u}_L,\bm{u}_R) = f^1_{2,S} \avg{u},&\nonumber\\
&f^3_{1,S}(\bm{u}_L,\bm{u}_R) = f^2_{2,S},& &f^3_{2,S}(\bm{u}_L,\bm{u}_R) = f^1_{2,S} \avg{v} + p_{\rm avg},&\nonumber\\
&f^4_{1,S}(\bm{u}_L,\bm{u}_R) = \LRp{E_{\rm avg} + p_{\rm avg}}\avg{u},& &f^4_{2,S}(\bm{u}_L,\bm{u}_R) = \LRp{E_{\rm avg} + p_{\rm avg} }\avg{v},& \nonumber
\end{align}
where we have defined the auxiliary quantities 
\begin{gather}
p_{\rm avg} = \frac{\avg{\rho}}{2\avg{\beta}}, \qquad E_{\rm avg} = \frac{\avg{\rho}^{\log}}{2\avg{\beta}^{\log}\LRp{\gamma -1}}   + \frac{\nor{\bm{u}}^2_{\rm avg}}{2}, \\
 \nor{\bm{u}}^2_{\rm avg} = 2(\avg{u}^2 + \avg{v}^2) - \LRp{\avg{u^2} +\avg{v^2}} \nonumber.  
\end{gather}

\subsubsection{Entropy conservation}

We begin by testing the propagation of a shock on a two-dimensional curved mesh using a discontinuous profile on the domain $\Omega = [0,20] \times [-5,5]$.   We set the initial velocities to be zero, and initialize the density and pressure as a discontinuous square pulse as in \cite{chan2017discretely}
\begin{equation}
\rho(\bm{x},t) = \begin{cases}
3 & \LRb{x_1} < 1/2 \text{ and } \LRb{x_2} < 1/2\\
2 & \text{otherwise},
\end{cases} \qquad 
u(\bm{x},t) = v(\bm{x},t) = 0, \qquad
p(\bm{x},t) = \rho^\gamma.
\label{eq:discontin}
\end{equation}
To test the scheme (\ref{eq:dgform2}), we utilize an entropy conservative flux and run it on a uniform triangular mesh with a curvilinear warping shown in Figure~\ref{fig:warp2d}.   Theorem~\ref{thm:stab2} ensures that, under an entropy conservative flux, (\ref{eq:dgform2}) is semi-discretely entropy conservative.  This does not hold at the fully discrete level; however, it is possible to verify that (\ref{eq:dgform2}) is entropy stable using other approaches.  

First, can examine the entropy RHS, which we define as the right hand side of (\ref{eq:dgform2}) tested with $\bm{v}_h$ 
\begin{equation}
  \text{entropy RHS}= -\sum_{j=1}^d\LRp{ \tilde{\bm{v}}^T\LRp{2\bm{Q}^j_k \circ \bm{F}_{j,S}}\bm{1} + \tilde{\bm{v}}_f^T\bm{W}_f^k \diag{\bm{n}_j}\LRp{\bm{f}_j^* - \bm{f}_j(\tilde{\bm{u}}_f)}}.  
\label{eq:entropyrhs}
\end{equation}
%\lcwnote{I put a 2 in the previous equation to match \eqref{eq:dgform2weak}.  Do we also want minus signs since these terms are moved to the RHS?}  JC: thanks!  sure - added a zero, though the term should be zero so the sign shouldn't matter
For positive density and pressure, (\ref{eq:entropyrhs}) should be zero to machine precision.  We can also track the change in entropy $\Delta U = \LRb{U(\bm{u}(\bm{x},t))-U(\bm{u}(\bm{x},0))}$, which should converge to $0$ as the timestep $dt$ approaches zero.  Furthermore, the rate of convergence should match the order of the time-stepper used \cite{gassner2016well, chan2017discretely}.  

\begin{figure}
\centering
\subfloat[Uniform mesh]{\includegraphics[width=.35\textwidth]{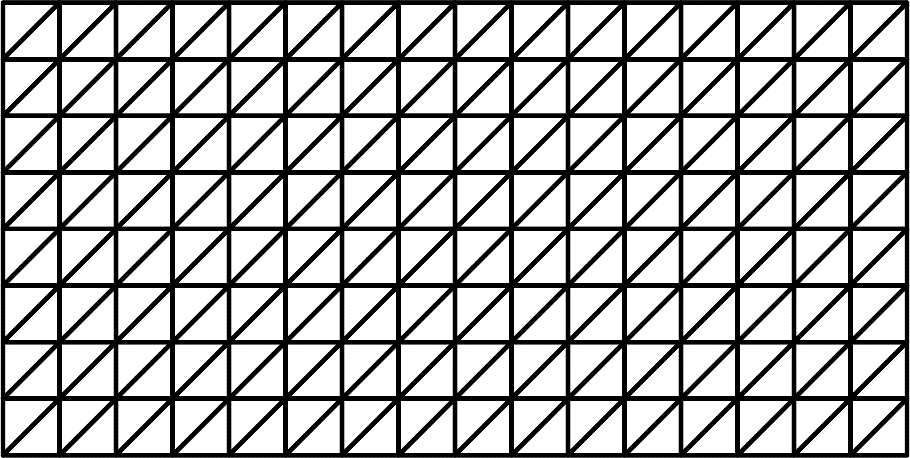}}
\hspace{1em}
\subfloat[Warped mesh]{\includegraphics[width=.35\textwidth]{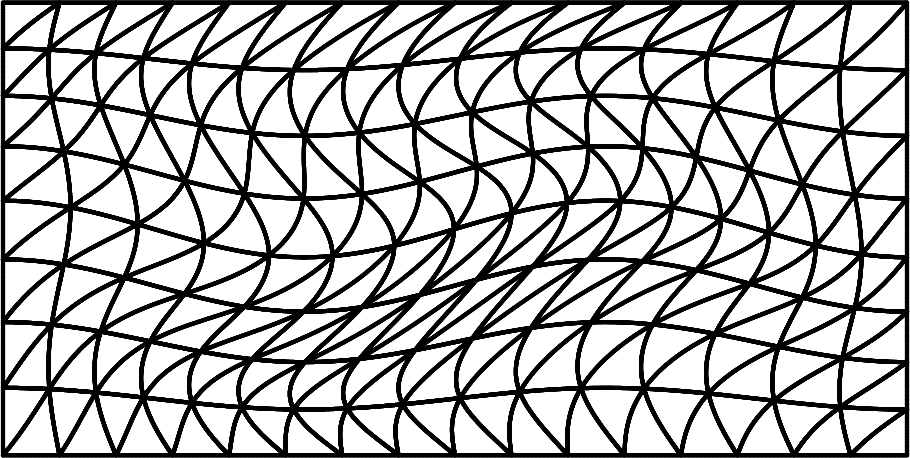}}
\caption{2D curved meshes used for testing entropy conservation and primary conservation.}
\label{fig:warp2d}
\end{figure}

Figure~\ref{fig:dSconverge} shows the evolution of entropy $U(\bm{u})$ over time $[0,T]$, where the final time $T = 2$.  We compare when the entropy-projected conservative variables $\tilde{\bm{u}}$ are computed using the standard $L^2$ projection on the reference element, and when $\tilde{\bm{u}}$ are computed using the weight-adjusted projection.  For the standard $L^2$ projection, the change in entropy does not decrease as $dt$ decreases (for a fixed mesh and order $N$).  When $\tilde{\bm{u}}$ is defined using the weight-adjusted projection, the entropy decreases as $dt$ decreases.  Moreover, the rate of convergence is approximately $O(dt^{4.675})$, which is slightly higher than the expected rate of $O(dt^4)$ when using LSRK-45.  Regardless of whether the standard and weight-adjusted projection was used, the entropy RHS (\ref{eq:entropyrhs}) is $O(10^{-14})$, indicating that the proposed scheme is implemented correctly.  

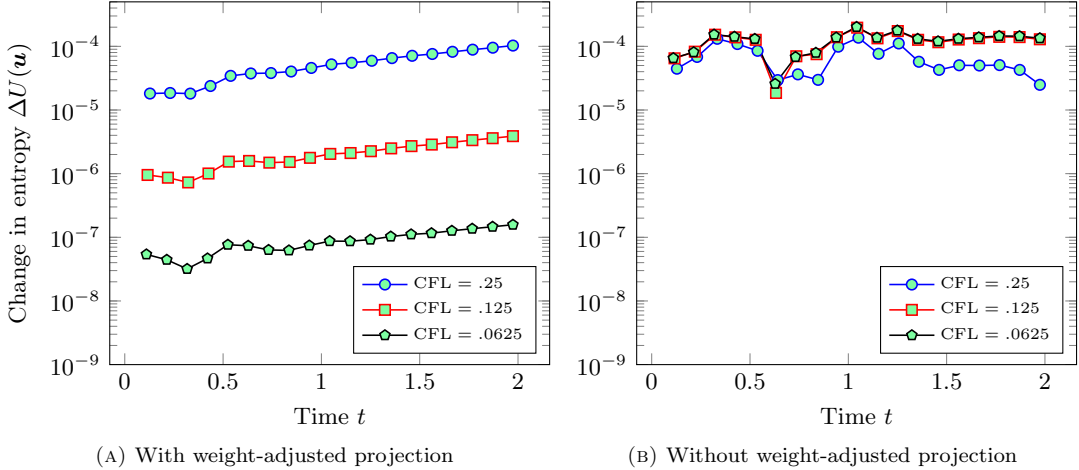
\begin{figure}
\centering
\subfloat[With weight-adjusted projection]{
\begin{tikzpicture}
\begin{semilogyaxis}[
    legend cell align=left,
    legend style={legend pos=south east, font=\tiny},
    width=.45\textwidth,    
    xlabel={Time $t$},
    ylabel={Change in entropy $\Delta U(\bm{u})$}, 
     ymin=1e-9, ymax=5e-4,    
    grid style=dashed,
] 

\addplot[color=blue,mark=*,semithick, mark options={solid,fill=markercolor}]
coordinates{(0.025641,0)(0.128205,1.81455e-05)(0.230769,1.84993e-05)(0.333333,1.81016e-05)(0.435897,2.38212e-05)(0.538462,3.43804e-05)(0.641026,3.75077e-05)(0.74359,3.80509e-05)(0.846154,4.02406e-05)(0.948718,4.57864e-05)(1.05128,5.21023e-05)(1.15385,5.52339e-05)(1.25641,5.93653e-05)(1.35897,6.53204e-05)(1.46154,7.10508e-05)(1.5641,7.58692e-05)(1.66667,8.21148e-05)(1.76923,8.86933e-05)(1.87179,9.52306e-05)(1.97436,0.000102832)};
%\addplot[color=blue,dashed,semithick, mark options={solid,fill=markercolor}]
%coordinates{(0.025641,9.95731e-16)(0.128205,4.11303e-15)(0.230769,1.09496e-14)(0.333333,1.86934e-14)(0.435897,1.8624e-14)(0.538462,4.82253e-14)(0.641026,2.80886e-14)(0.74359,3.34073e-14)(0.846154,3.41116e-14)(0.948718,1.12826e-14)(1.05128,1.02106e-14)(1.15385,7.27543e-15)(1.25641,3.21965e-15)(1.35897,2.70617e-15)(1.46154,2.28116e-15)(1.5641,1.01134e-14)(1.66667,9.05179e-15)(1.76923,8.96852e-15)(1.87179,2.69368e-14)(1.97436,1.20043e-15)};
\addplot[color=red,mark=square*,semithick, mark options={solid,fill=markercolor}]
coordinates{(0.0129032,0)(0.116129,9.54018e-07)(0.219355,8.64409e-07)(0.322581,7.28292e-07)(0.425806,1.00547e-06)(0.529032,1.54507e-06)(0.632258,1.58376e-06)(0.735484,1.48926e-06)(0.83871,1.52599e-06)(0.941935,1.77263e-06)(1.04516,2.04306e-06)(1.14839,2.10785e-06)(1.25161,2.25231e-06)(1.35484,2.49471e-06)(1.45806,2.7075e-06)(1.56129,2.86391e-06)(1.66452,3.11015e-06)(1.76774,3.35711e-06)(1.87097,3.6045e-06)(1.97419,3.88568e-06)};
%\addplot[color=red,dotted,semithick, mark options={solid,fill=markercolor}]
%coordinates{(0.0129032,1.00787e-15)(0.116129,2.48065e-16)(0.219355,8.43769e-15)(0.322581,1.07483e-14)(0.425806,6.18255e-15)(0.529032,4.95159e-14)(0.632258,1.19835e-14)(0.735484,3.68837e-14)(0.83871,2.35957e-14)(0.941935,1.19904e-14)(1.04516,1.73785e-14)(1.14839,7.88952e-15)(1.25161,2.44249e-14)(1.35484,6.38031e-15)(1.45806,1.78781e-14)(1.56129,5.72459e-16)(1.66452,4.02456e-15)(1.76774,1.7316e-14)(1.87097,2.50425e-14)(1.97419,3.55375e-14)};
\addplot[color=black,mark=pentagon*,semithick, mark options={solid,fill=markercolor}]
coordinates{(0.00647249,0)(0.110032,5.38994e-08)(0.213592,4.43651e-08)(0.317152,3.18878e-08)(0.420712,4.65967e-08)(0.524272,7.64772e-08)(0.627832,7.36288e-08)(0.731392,6.33475e-08)(0.834951,6.22881e-08)(0.938511,7.43794e-08)(1.04207,8.71374e-08)(1.14563,8.67872e-08)(1.24919,9.19998e-08)(1.35275,1.02781e-07)(1.45631,1.11204e-07)(1.55987,1.16121e-07)(1.66343,1.26634e-07)(1.76699,1.36562e-07)(1.87055,1.46567e-07)(1.97411,1.57581e-07)};
%\addplot[color=black,dashdotted,semithick, mark options={solid,fill=markercolor}]
%coordinates{(0.00647249,2.19226e-16)(0.110032,5.88071e-15)(0.213592,1.34059e-14)(0.317152,2.30337e-14)(0.420712,8.23647e-15)(0.524272,2.68709e-14)(0.627832,2.53304e-14)(0.731392,2.95527e-14)(0.834951,2.5948e-14)(0.938511,4.56579e-15)(1.04207,2.28428e-14)(1.14563,8.22259e-15)(1.24919,1.51094e-14)(1.35275,7.86871e-15)(1.45631,1.11508e-14)(1.55987,1.57721e-14)(1.66343,6.95624e-15)(1.76699,2.07681e-14)(1.87055,2.72421e-14)(1.97411,1.7028e-14)};

% % N = 4, K= 8, dt = .25
 % N = 4, K= 8, dt = .125
 % N = 4, K= 8, dt = .0625

\legend{${\rm CFL} = .25$,${\rm CFL} = .125$,${\rm CFL} = .0625$ }
%\legend{Uniform, Optimal, Smoothed}
\end{semilogyaxis}
\end{tikzpicture}
}
\subfloat[Without weight-adjusted projection]{
\begin{tikzpicture}
\begin{semilogyaxis}[
    legend cell align=left,
    legend style={legend pos=south east, font=\tiny},
    width=.45\textwidth,
    xlabel={Time $t$},
%         ymin=1e-10, ymax=1e-1,    
%     ymin=1e-7, ymax=1e1,
     ymin=1e-9, ymax=5e-4,    
    grid style=dashed,
] 

\addplot[color=blue,mark=*,semithick, mark options={solid,fill=markercolor}]
coordinates{(0.025641,0)(0.128205,4.45681e-05)(0.230769,6.82313e-05)(0.333333,0.000131517)(0.435897,0.000108761)(0.538462,8.51562e-05)(0.641026,2.94791e-05)(0.74359,3.62904e-05)(0.846154,2.97564e-05)(0.948718,9.87886e-05)(1.05128,0.000136806)(1.15385,7.64601e-05)(1.25641,0.000111044)(1.35897,5.72761e-05)(1.46154,4.27e-05)(1.5641,5.03819e-05)(1.66667,4.99789e-05)(1.76923,5.07438e-05)(1.87179,4.26361e-05)(1.97436,2.48694e-05)};
%\addplot[color=blue,dashed,semithick, mark options={solid,fill=markercolor}]
%coordinates{(0.025641,5.11743e-16)(0.128205,1.16547e-14)(0.230769,7.91034e-15)(0.333333,1.28231e-14)(0.435897,4.79131e-15)(0.538462,3.44134e-14)(0.641026,6.92502e-15)(0.74359,2.28047e-14)(0.846154,2.15314e-14)(0.948718,4.36456e-15)(1.05128,2.09416e-14)(1.15385,4.86763e-15)(1.25641,3.59088e-15)(1.35897,3.43475e-16)(1.46154,1.03632e-14)(1.5641,1.03598e-14)(1.66667,1.83881e-16)(1.76923,5.94663e-15)(1.87179,3.18252e-14)(1.97436,1.88495e-14)};
\addplot[color=red,mark=square*,semithick, mark options={solid,fill=markercolor}]
coordinates{(0.0129032,0)(0.116129,6.48664e-05)(0.219355,8.24474e-05)(0.322581,0.000152373)(0.425806,0.000138342)(0.529032,0.000126709)(0.632258,1.85501e-05)(0.735484,6.98555e-05)(0.83871,7.53129e-05)(0.941935,0.00013915)(1.04516,0.000196476)(1.14839,0.000133645)(1.25161,0.000173751)(1.35484,0.000126668)(1.45806,0.000116008)(1.56129,0.000127919)(1.66452,0.0001343)(1.76774,0.000140992)(1.87097,0.00013971)(1.97419,0.0001291)};
%\addplot[color=red,dotted,semithick, mark options={solid,fill=markercolor}]
%coordinates{(0.0129032,3.00107e-16)(0.116129,6.93196e-15)(0.219355,1.00198e-14)(0.322581,7.79932e-15)(0.425806,2.48759e-15)(0.529032,3.46181e-14)(0.632258,1.7205e-14)(0.735484,2.30718e-14)(0.83871,3.0146e-14)(0.941935,9.4369e-15)(1.04516,1.07969e-14)(1.14839,5.59275e-15)(1.25161,5.34295e-15)(1.35484,5.7801e-15)(1.45806,8.74648e-15)(1.56129,1.24137e-14)(1.66452,2.95597e-15)(1.76774,2.64441e-14)(1.87097,2.58023e-14)(1.97419,6.984e-15)};
\addplot[color=black,mark=pentagon*,semithick, mark options={solid,fill=markercolor}]
coordinates{(0.00647249,0)(0.110032,6.52844e-05)(0.213592,8.08628e-05)(0.317152,0.000153077)(0.420712,0.000141625)(0.524272,0.000130883)(0.627832,2.58046e-05)(0.731392,6.81127e-05)(0.834951,7.92926e-05)(0.938511,0.000137768)(1.04207,0.000201777)(1.14563,0.000136705)(1.24919,0.000177364)(1.35275,0.000131153)(1.45631,0.000119851)(1.55987,0.000131686)(1.66343,0.000138676)(1.76699,0.00014539)(1.87055,0.000144608)(1.97411,0.000134142)};
%\addplot[color=black,dashdotted,semithick, mark options={solid,fill=markercolor}]
%coordinates{(0.00647249,7.31403e-16)(0.110032,1.05055e-14)(0.213592,2.58127e-15)(0.317152,1.04916e-14)(0.420712,5.12437e-15)(0.524272,4.40203e-14)(0.627832,1.147e-14)(0.731392,2.81025e-14)(0.834951,3.01946e-14)(0.938511,4.51028e-15)(1.04207,1.096e-14)(1.14563,6.11317e-15)(1.24919,3.69843e-15)(1.35275,6.47399e-15)(1.45631,9.41608e-15)(1.55987,1.58901e-15)(1.66343,6.39766e-15)(1.76699,1.56819e-14)(1.87055,2.11949e-14)(1.97411,1.12063e-14)};

%\legend{Geo-$(N+1)$, $h^{N+2}$, Geo-$N$, $h^{N+1}$}
\legend{${\rm CFL} = .25$,${\rm CFL} = .125$,${\rm CFL} = .0625$ }
\end{semilogyaxis}
\end{tikzpicture}
}
\caption{Change in entropy under an entropy conservative formulation with $N=4$.  In both cases, the magnitude of the entropy RHS (\ref{eq:entropyrhs}) is $O\LRp{10^{-14}}$. }
\label{fig:dSconverge}
\end{figure}

These results suggest that the weight-adjusted projection is necessary to produce an entropy conservative scheme on curvilinear meshes.  However, entropy conservative schemes result in spurious oscillations and lower convergence rates \cite{chan2017discretely}.  In practice, dissipative interface terms are added to produce entropy stable schemes.  In the presence of interface dissipation, the evolution of entropy over time, $L^2$ errors for smooth solutions, and the qualitative behavior of the solution are very similar with or without the weight-adjusted projection.  This may reflect the fact that aliasing-driven instabilities arise from the spatial discretization, and the entropy RHS (\ref{eq:entropyrhs}) is machine precision zero with or without the weight-adjusted projection.  In contrast,  the presence of the weight-adjusted projection affects only the time-derivative on the left-hand side, and may not play as large a role in suppressing aliasing instabilities.

\subsubsection{Local and global conservation}
\label{sec:conservationcheck}
Next, we check that primary conservation is maintained numerically on curved meshes.  We follow \cite{friedrich2017entropy} and examine the semi-discrete evolution of the average of $\bm{u}$ over the domain $\Omega$
\begin{align}
  \pd{}{t}\int_{\Omega} \bm{u} \diff{\bm{x}} = \sum_k \int_{D^k} \pd{\bm{u}}{t} \diff{\bm{x}} = \sum_k \int_{\hat{D}} \pd{\bm{u}}{t} J^k \diff{\hat{\bm{x}}} = \sum_k \bm{1}^T \bm{W}\diag{\bm{J}^k} \bm{V}_q \td{\bm{u}_h}{t}.  
\label{eq:conssemidiscrete}
\end{align}
The quantity (\ref{eq:conssemidiscrete}) is computed using quadrature by multiplying the semi-discrete system (\ref{eq:dgform2}) by $\bm{1}^T\bm{W}\diag{\bm{J}^k} \bm{V}_q$
\begin{align}
&\sum_k \bm{1}^T \bm{W}\diag{\bm{J}^k} \bm{V}_q \td{\bm{u}_h}{t}= \nonumber\\
& \underbrace{\bm{1}^T\bm{W}\diag{\bm{J}^k}\bm{V}_q\LRp{
 \LRs{\begin{array}{cc}\tilde{\bm{P}}^k_q & \tilde{\bm{L}}^k_q\end{array}} \sum_{j=1}^d \LRp{2\bm{D}^j_k \circ \bm{F}_{j,S}}\bm{1} + \sum_{j=1}^d \tilde{\bm{L}}^k_q \diag{\bm{n}_j}\LRp{\bm{f}_j^* - \bm{f}_j(\tilde{\bm{u}}_f)}}}_{\text{conservation residual}}.
 \label{eq:consres}
\end{align}

We consider three different cases:
\begin{enumerate}
\item \textbf{Unmodified:} the weight-adjusted DG scheme (\ref{eq:dgform2}) is used without any special modifications,
\item \textbf{Polynomial approximation:} the weight-adjusted DG scheme (\ref{eq:dgform2}) utilizes the $L^2$ projection of $J^k$ onto $P^N(\hat{D})$ in the weight-adjusted mass matrix (such that Lemma~\ref{eq:conscorrect1} holds), 
\item \textbf{Conservative correction:} the conservation correction (\ref{eq:conscorrect}) is applied to the right hand side of the scheme (\ref{eq:dgform2}).  
\end{enumerate}
All results use a Lax-Friedrichs flux, a CFL of $1/2$, $N=4$, and the curved mesh warping shown in Figure~\ref{fig:warp2d}.  We use volume quadrature rules from \cite{xiao2010quadrature} which are exact for degree $2N+1$ polynomials, and use $(N+1)$-node 1D Gauss--Legendre quadrature rules on the faces.  

Figure~\ref{subfig:cons1} shows computed conservation residuals from (\ref{eq:consres}) on the curved mesh in Figure~\ref{fig:warp2d} for the discontinuous pulse initial condition (\ref{eq:discontin}).  The conservation residual oscillates around $O(10^{-6})$ for the unmodified scheme (\ref{eq:dgform2}).  Modifying the scheme using either the ``Polynomial approximation'' or ``Conservative correction'' approaches reduces the conservation error to between $O(10^{-11})$ and $O(10^{-12})$.  %The magnitude of the conservation residual is due to roundoff effects.  Repeating the same experiments on a smaller mesh of $8\times 8$ curved elements reduces the conservation residual to $O(10^{-13})$, while using a larger mesh of $32\times 32$ curved elements results in a conservation residual of $O(10^{-11})$.  
We have also studied the accuracy of each case in by comparing $L^2$ errors on a curved mesh for a smooth vortex solution  at time $T=5$.  We observe that in all cases, the $L^2$ errors are virtually identical.  For $N=4$ on an $8\times 8$ mesh, the $L^2$ errors for each case differ only in the 5th digit, while for the $16\times 16$ and $32\times 32$ meshes, they differ only in the 8th digit.  This confirms that both approaches outlined in Section~\ref{sec:conservation} restore primary conservation with no perceivable effect on accuracy.  

%\note{finish}

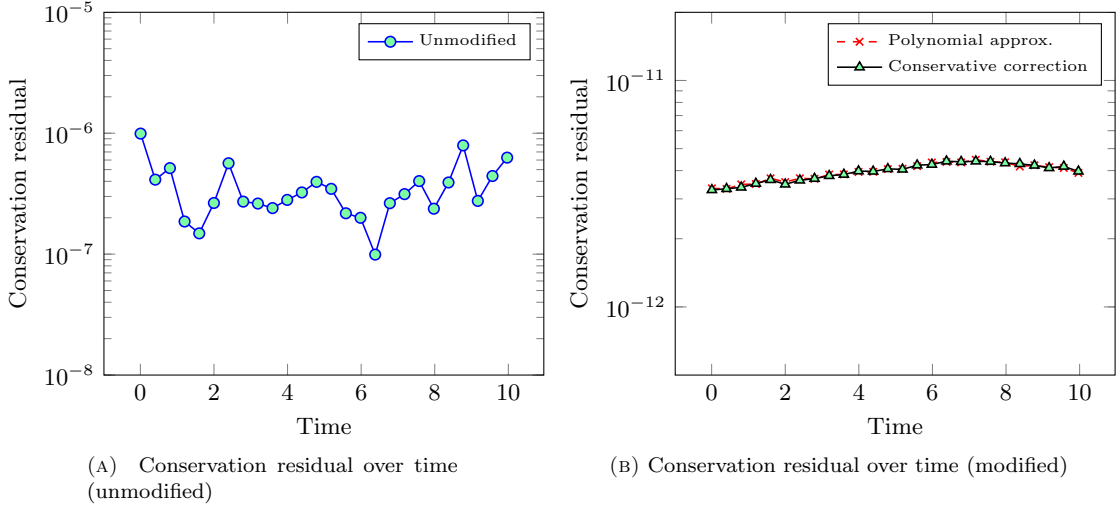
\begin{figure}
\centering
\subfloat[ Conservation residual over time (unmodified)]{
\begin{tikzpicture}
\begin{semilogyaxis}[
    legend cell align=left,
    legend style={legend pos=north east, font=\tiny},
    width=.45\textwidth,    
    xlabel={Time},
    ylabel={Conservation residual}, 
    ymin=1e-8, ymax=1e-5,    
    grid style=dashed,
] 

\addplot[color=blue,mark=*,semithick, mark options={solid,fill=markercolor}]
coordinates{(0.00642674,9.9266e-07)(0.404884,4.12992e-07)(0.803342,5.14326e-07)(1.2018,1.86091e-07)(1.60026,1.48526e-07)(1.99871,2.64763e-07)(2.39717,5.64483e-07)(2.79563,2.71857e-07)(3.19409,2.62073e-07)(3.59254,2.40249e-07)(3.991,2.80807e-07)(4.38946,3.23725e-07)(4.78792,3.9606e-07)(5.18638,3.46717e-07)(5.58483,2.17698e-07)(5.98329,1.99781e-07)(6.38175,9.91165e-08)(6.78021,2.63862e-07)(7.17866,3.13545e-07)(7.57712,4.01892e-07)(7.97558,2.37462e-07)(8.37404,3.91475e-07)(8.77249,7.92905e-07)(9.17095,2.75479e-07)(9.56941,4.42523e-07)(9.96787,6.28215e-07)};

%\addplot[color=red,mark=x,dashed,semithick, mark options={solid,fill=markercolor}]
%coordinates{(0.00642674,3.34034e-12)(0.404884,3.31953e-12)(0.803342,3.47682e-12)(1.2018,3.48782e-12)(1.60026,3.69874e-12)(1.99871,3.55961e-12)(2.39717,3.69915e-12)(2.79563,3.67725e-12)(3.19409,3.84246e-12)(3.59254,3.84801e-12)(3.991,3.9449e-12)(4.38946,3.95075e-12)(4.78792,4.05986e-12)(5.18638,4.07415e-12)(5.58483,4.18282e-12)(5.98329,4.34447e-12)(6.38175,4.3659e-12)(6.78021,4.34614e-12)(7.17866,4.46538e-12)(7.57712,4.37571e-12)(7.97558,4.35258e-12)(8.37404,4.16896e-12)(8.77249,4.22267e-12)(9.17095,4.14187e-12)(9.56941,4.10735e-12)(9.96787,3.90628e-12)};
%
%\addplot[color=black,mark=triangle*,semithick, mark options={solid,fill=markercolor}]
%coordinates{(0.00642674,3.28579e-12)(0.404884,3.32419e-12)(0.803342,3.37232e-12)(1.2018,3.50534e-12)(1.60026,3.64225e-12)(1.99871,3.47872e-12)(2.39717,3.62415e-12)(2.79563,3.68648e-12)(3.19409,3.79388e-12)(3.59254,3.84432e-12)(3.991,3.97812e-12)(4.38946,3.96804e-12)(4.78792,4.0684e-12)(5.18638,4.04986e-12)(5.58483,4.22413e-12)(5.98329,4.25356e-12)(6.38175,4.39801e-12)(6.78021,4.37474e-12)(7.17866,4.39737e-12)(7.57712,4.38242e-12)(7.97558,4.31792e-12)(8.37404,4.28398e-12)(8.77249,4.21688e-12)(9.17095,4.10925e-12)(9.56941,4.17163e-12)(9.96787,3.96996e-12)};
\legend{Unmodified}
%\legend{Unmodified, Polynomial approx., Conservative correction }
%\legend{Uniform, Optimal, Smoothed}
\end{semilogyaxis}
\end{tikzpicture}
\label{subfig:cons1}}
\subfloat[Conservation residual over time (modified)]{
\begin{tikzpicture}
\begin{semilogyaxis}[
    legend cell align=left,
    legend style={legend pos=north east, font=\tiny},
    width=.45\textwidth,    
    xlabel={Time},
    ylabel={Conservation residual}, 
    ymin=.5e-12, ymax=2e-11,    
    grid style=dashed,
    ytick={1e-12, 1e-11},
    yticklabels={{$ 10^{-12}$},{$ 10^{-11}$}}
] 

%\addplot[color=blue,mark=*,semithick, mark options={solid,fill=markercolor}]
%coordinates{(0.00642674,9.9266e-07)(0.404884,4.12992e-07)(0.803342,5.14326e-07)(1.2018,1.86091e-07)(1.60026,1.48526e-07)(1.99871,2.64763e-07)(2.39717,5.64483e-07)(2.79563,2.71857e-07)(3.19409,2.62073e-07)(3.59254,2.40249e-07)(3.991,2.80807e-07)(4.38946,3.23725e-07)(4.78792,3.9606e-07)(5.18638,3.46717e-07)(5.58483,2.17698e-07)(5.98329,1.99781e-07)(6.38175,9.91165e-08)(6.78021,2.63862e-07)(7.17866,3.13545e-07)(7.57712,4.01892e-07)(7.97558,2.37462e-07)(8.37404,3.91475e-07)(8.77249,7.92905e-07)(9.17095,2.75479e-07)(9.56941,4.42523e-07)(9.96787,6.28215e-07)};

\addplot[color=red,mark=x,dashed,semithick, mark options={solid,fill=markercolor}]
coordinates{(0.00642674,3.34034e-12)(0.404884,3.31953e-12)(0.803342,3.47682e-12)(1.2018,3.48782e-12)(1.60026,3.69874e-12)(1.99871,3.55961e-12)(2.39717,3.69915e-12)(2.79563,3.67725e-12)(3.19409,3.84246e-12)(3.59254,3.84801e-12)(3.991,3.9449e-12)(4.38946,3.95075e-12)(4.78792,4.05986e-12)(5.18638,4.07415e-12)(5.58483,4.18282e-12)(5.98329,4.34447e-12)(6.38175,4.3659e-12)(6.78021,4.34614e-12)(7.17866,4.46538e-12)(7.57712,4.37571e-12)(7.97558,4.35258e-12)(8.37404,4.16896e-12)(8.77249,4.22267e-12)(9.17095,4.14187e-12)(9.56941,4.10735e-12)(9.96787,3.90628e-12)};

\addplot[color=black,mark=triangle*,semithick, mark options={solid,fill=markercolor}]
coordinates{(0.00642674,3.28579e-12)(0.404884,3.32419e-12)(0.803342,3.37232e-12)(1.2018,3.50534e-12)(1.60026,3.64225e-12)(1.99871,3.47872e-12)(2.39717,3.62415e-12)(2.79563,3.68648e-12)(3.19409,3.79388e-12)(3.59254,3.84432e-12)(3.991,3.97812e-12)(4.38946,3.96804e-12)(4.78792,4.0684e-12)(5.18638,4.04986e-12)(5.58483,4.22413e-12)(5.98329,4.25356e-12)(6.38175,4.39801e-12)(6.78021,4.37474e-12)(7.17866,4.39737e-12)(7.57712,4.38242e-12)(7.97558,4.31792e-12)(8.37404,4.28398e-12)(8.77249,4.21688e-12)(9.17095,4.10925e-12)(9.56941,4.17163e-12)(9.96787,3.96996e-12)};
\legend{Polynomial approx., Conservative correction }
%\legend{Unmodified, Polynomial approx., Conservative correction }
%\legend{Uniform, Optimal, Smoothed}
\end{semilogyaxis}
\end{tikzpicture}
\label{subfig:cons2}}
%\subfloat[$L^2$ errors at time $T=5$]{
%\begin{tikzpicture}
%\begin{loglogaxis}[
%    legend cell align=left,
%    legend style={legend pos=south east, font=\tiny},
%    width=.4\textwidth,    
%    xlabel={Time},
%    ylabel={Conservation residual}, 
%    ymin=1e-2, ymax=2,    
%    grid style=dashed,
%    ytick={1e-13,1e-11,1e-9,1e-7,1e-5,1e-3,1e-1 }
%] 
%
%\addplot[color=blue,mark=*,semithick, mark options={solid,fill=markercolor}]
%% 1.315466363259058   0.263564848187665   0.018309692309614
%coordinates{(1.25,1.315466)(0.625,0.263565)(0.3125,0.0183097)};
%
%\addplot[color=red,mark=x,dashed,semithick, mark options={solid,fill=markercolor}]
%%1.315440699275237   0.263564850056684   0.018309677771005
%coordinates{(1.25,1.31544)(0.625,0.263565)(0.3125,0.0183097)};
%
%\addplot[color=black,mark=triangle*,semithick, mark options={solid,fill=markercolor}]
%% 1.315467094406914
%coordinates{(1.25,1.315467)(0.625,0.263565)(0.3125,0.0183097)};
%
%\legend{Unmodified, Polynomial approx., Conservative correction }
%
%\end{loglogaxis}
%\end{tikzpicture}
%\label{subfig:cons2}}
%\caption{Conservation residuals for a discontinuous solution and $L^2$ errors for a vortex solution using the unmodified WADG formulation (\ref{eq:dgform2}) and two different techniques for restoring local conservation.  Both experiments use a Lax-Friedrichs flux, a CFL of $1/2$, $N=4$, and the curved mesh warping shown in Figure~\ref{fig:warp2d}.  }
\caption{Conservation residuals for a discontinuous initial condition using the unmodified WADG formulation (\ref{eq:dgform2}) and two different techniques for restoring local conservation.  Both experiments use a Lax-Friedrichs flux, a CFL of $1/2$, $N=4$, and the curved mesh warping shown in Figure~\ref{fig:warp2d}.  }
\label{fig:cons}
\end{figure}

\subsubsection{Accuracy and convergence}

\begin{figure}
\centering
\subfloat[Affine mesh]{\includegraphics[width=.4\textwidth]{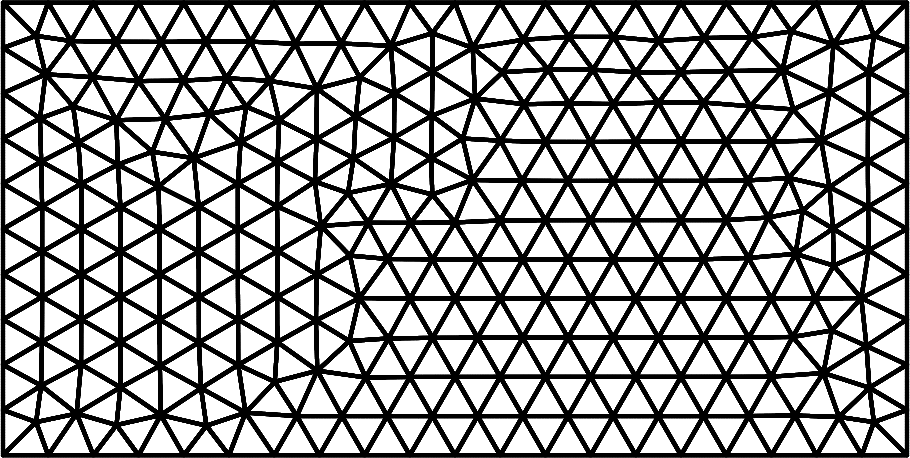}}
\hspace{2em}
\subfloat[Curved mesh]{\includegraphics[width=.4\textwidth]{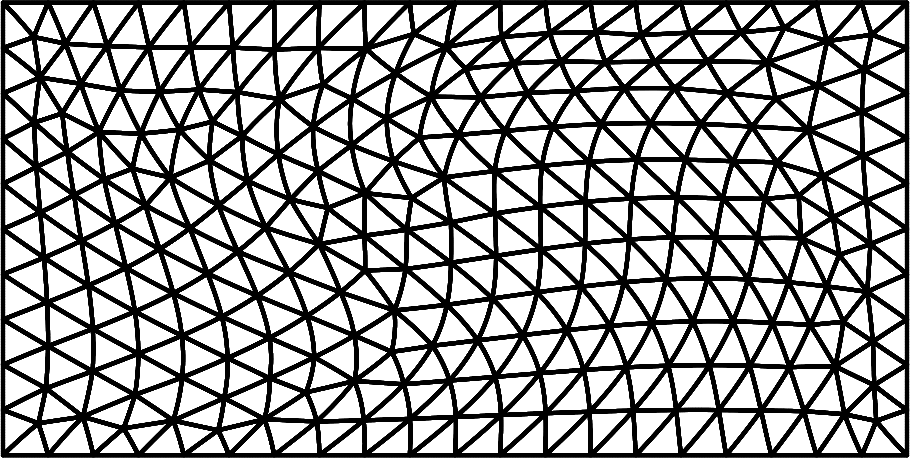}}
\caption{Example of affine and warped meshes ($h = 1$ shown) used in convergence studies.}
\label{fig:warp2dconverge}
\end{figure}

Finally, we test the accuracy of the proposed schemes for smooth solutions on curved meshes in two dimensions.  
We use the isentropic vortex problem \cite{shu1998essentially}, which has an analytical solution 
\begin{align}
\rho(\bm{x},t) &= \LRp{1 - \frac{\frac{1}{2}(\gamma-1)(\beta e^{1-r(\bm{x},t)^2})^2}{8\gamma \pi^2}}^{\frac{1}{\gamma-1}}, \qquad p = \rho^{\gamma},\\
u(\bm{x},t) &= 1 - \frac{\beta}{2\pi} e^{1-r(\bm{x},t)^2}(x_2-c_2), \qquad v(\bm{x},t) = \frac{\beta}{2\pi} e^{1-r(\bm{x},t)^2}(x_2-c_2),\nonumber
\end{align}
where $u, v$ are the $x_1$ and $x_2$ velocity and $r(\bm{x},t) = \sqrt{(x_1-c_1-t)^2 + (x_2-c_2)^2}$.  Here, we take $c_1 = 5, c_2 = 0$ and $\beta = 5$.  
The solution is computed on a periodic rectangular domain $[0, 20] \times [-5,5]$ at final time $T=5$.  Quasi-uniform triangular meshes are generated using \textsc{GMSH} \cite{geuzaine2009gmsh}, and a curvilinear warping is applied to the mesh to test the effect of non-affine mappings.  This warping is shown in Figure~\ref{fig:warp2dconverge} and is defined by mapping nodal positions on each triangle to warped nodal positions $(\tilde{x_1},\tilde{x_2})$ via
\begin{align*}
\tilde{x_1} &= x_1 + \sin\LRp{\pi x_1 / 20}\sin\LRp{2\pi (x_2+5)/10}\\
\tilde{x_2} &= x_2 - \frac{1}{2}\sin\LRp{2 \pi x_1 / 20}\sin\LRp{\pi (x_2+5)/10}.
\end{align*}  
To ensure primary conservation, we use the ``Polynomial Approximation'' strategy described in Section~\ref{sec:conservation} and compute the inverse of the weight-adjusted mass matrix using the degree $N$ $L^2$ projection $\hat{\Pi}_N J^k$ instead of $J^k$.  The computed $L^2$ errors are shown in Figure~\ref{fig:converge2d}.  We observe optimal $O(h^{N+1})$ rates of convergence for $N=2, N= 4$.  For degree $N=3$, the rate of convergence is slightly higher than $O(h^{4})$ and may indicate that the mesh is not yet sufficiently fine for the $L^2$ error to show the asymptotic convergence rate.  

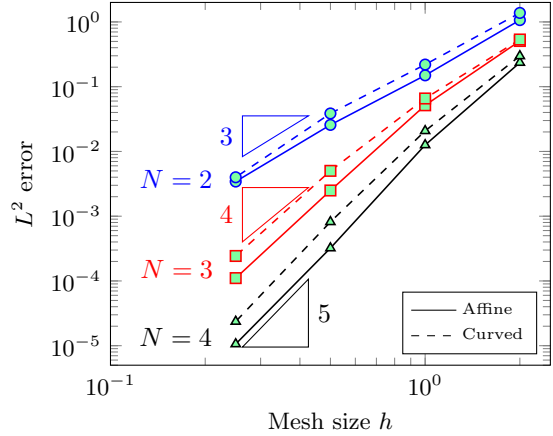
\begin{figure}
\centering
\begin{tikzpicture}
\begin{loglogaxis}[
    legend cell align=left,
    legend style={legend pos=south east, font=\tiny},
    width=.45\textwidth,    
    xlabel={Mesh size $h$},
    ylabel={$L^2$ error}, 
     ymin=5e-6, ymax=2,    
     xmin=1e-1, xmax=2.5,         
    grid style=dashed,
    legend entries={Affine,Curved}
] 
\addlegendimage{no markers,black}
\addlegendimage{no markers,dashed,black}

\addplot[color=blue,mark=*,semithick, mark options={solid,fill=markercolor}]
coordinates{(2,1.06717)(1,0.149639)(0.5,0.025693)(0.25,0.00342827)} [yshift=4pt] node[left, pos=1.05, color=blue] {$N = 2$};
\addplot[color=blue,mark=*,dashed,semithick, mark options={solid,fill=markercolor}]
coordinates{(2,1.37709)(1,0.219501)(0.5,0.0385896)(0.25,0.0039906)};
\logLogSlopeTriangleFlip{0.45}{0.15}{0.575}{3}{blue}

\addplot[color=red,mark=square*,semithick, mark options={solid,fill=markercolor}]
coordinates{(2,0.50782)(1,0.0516053)(0.5,0.00249425)(0.25,0.000110618)}[yshift=8pt] node[left, pos=1.05, color=red] {$N = 3$};
\addplot[color=red,mark=square*,dashed,semithick, mark options={solid,fill=markercolor}]
coordinates{(2,0.535036)(1,0.0657418)(0.5,0.00501536)(0.25,0.000243005)};
\logLogSlopeTriangleFlip{0.45}{0.15}{0.34}{4}{red}

\addplot[color=black,mark=triangle*,semithick, mark options={solid,fill=markercolor}]
coordinates{(2,0.235374)(1,0.0125714)(0.5,0.000321559)(0.25,1.06097e-05)} [yshift=8pt] node[left, pos=1.05, color=black] {$N = 4$};
\addplot[color=black,mark=triangle*,dashed,semithick, mark options={solid,fill=markercolor}]
coordinates{(2,0.297758)(1,0.0207604)(0.5,0.000816006)(0.25,2.36102e-05)};
\logLogSlopeTriangle{0.45}{0.15}{0.05}{5}{black}

%\legend{$L^2$ projection,Weight-adjusted,Difference}
%\legend{Uniform, Optimal, Smoothed}
\end{loglogaxis}
\end{tikzpicture}
\caption{Convergence of $L^2$ errors for the 2D isentropic vortex problem on affine and curved meshes.}
\label{fig:converge2d}
\end{figure}

%The mesh is constructed by first building a mesh of uniform quadrilateral elements and subdividing each quadrilateral into two uniform triangles.  We estimate the $L^2$ errors and their rates of convergence, which are shown in Figure~\ref{fig:converge2d}.  We observe $L^2$ optimal $O(h^{N+1})$ rates of convergence for $N = 1,\ldots, 3$, while for $N = 4$ we observe a rate of convergence between $O(h^{N+1})$ and $O(h^{N+1/2})$.  We note that the rate of $O(h^{N+1/2})$ is the theoretically proven rate of convergence for upwind DG methods on general meshes applied to linear hyperbolic problems  \cite{johnson1986analysis,cockburn2008optimal}.  We note that this observed rate does not change significantly if the time-step is halved (improving from $4.785$ to $4.8$), suggesting that this slight degradation in convergence rate is not due to temporal errors.  

\subsection{Three dimensional compressible Euler equations}

In three dimensions, the compressible Euler equations are given by
\begin{align}
\pd{}{t}\LRp{\begin{array}{c}
\rho\\
\rho u\\
\rho v\\
\rho w\\
E
\end{array}} +
\pd{}{x_1}\LRp{\begin{array}{c}
\rho u\\
\rho u^2+p\\
\rho uv\\
\rho uw\\
u(E+p)
\end{array}} + \pd{}{x_2}\LRp{\begin{array}{c}
\rho v\\
\rho uv\\
\rho v^2+p\\
\rho vw\\
v(E+p)
\end{array}}  + \pd{}{x_3}\LRp{\begin{array}{c}
\rho w\\
\rho uw\\
\rho vw\\
\rho w^2+p\\
w(E+p)
\end{array}} &= 0,
\label{eq:euler3d}
\end{align}
where the pressure $p$ and specific internal energy $\rho e$ are defined 
\begin{align}
p = (\gamma-1)\LRp{E - \frac{1}{2}\rho (u^2+v^2+w^2)}, \qquad \rho e = E - \frac{1}{2}\rho (u^2+v^2+w^2).  
\label{eq:pressure3d}
\end{align}
The formula for the entropy $U(\bm{u})$ in three dimensions is the same as the two-dimensional formula (\ref{eq:entropy2d}).  
 The entropy variables in three dimensions are 
\begin{align}
v_1 = \frac{\rho e (\gamma + 1 - s) - E}{\rho e}, \qquad v_2 = \frac{\rho u}{\rho e}, \qquad v_3 = \frac{\rho v}{\rho e}, \qquad v_4 = \frac{\rho w}{\rho e}, \qquad v_5 = -\frac{\rho}{\rho e}.
\end{align}
The conservation variables in terms of the entropy variables are given by
\begin{equation}
\rho = -(\rho e) v_5, \qquad \rho u = (\rho e) v_2, \qquad \rho v = (\rho e) v_3, \qquad \rho w = (\rho e) v_4, \qquad E = (\rho e)\LRp{1 - \frac{{v_2^2+v_3^2+v_4^2}}{2 v_5}},
\end{equation}
where $\rho e$ and $s$ in terms of the entropy variables are 
\begin{equation}
\rho e = \LRp{\frac{(\gamma-1)}{\LRp{-v_5}^{\gamma}}}^{1/(\gamma-1)}e^{\frac{-s}{\gamma-1}}, \qquad s = \gamma - v_1 + \frac{{v_2^2+v_3^2+v_4^2}}{2v_5}.
\end{equation}
A set of entropy conservative numerical fluxes for the three-dimensional compressible Euler equations can be written as
\begin{gather}
\bm{f}_{1,S} = \LRp{\begin{array}{c}
\avg{\rho}^{\log}\avg{u}\\
\avg{\rho}^{\log}\avg{u}^2 + p_{\rm avg}\\
\avg{\rho}^{\log}\avg{u}\avg{v}\\
\avg{\rho}^{\log}\avg{u}\avg{w}\\
(E_{\rm avg}+ p_{\rm avg})\avg{u}\\
\end{array}}, 
\qquad 
\bm{f}_{2,S} = \LRp{\begin{array}{c}
\avg{\rho}^{\log}\avg{v}\\
\avg{\rho}^{\log}\avg{u}\avg{v}\\
\avg{\rho}^{\log}\avg{v}^2 + p_{\rm avg}\\
\avg{\rho}^{\log}\avg{v}\avg{w}\\
(E_{\rm avg}+ p_{\rm avg})\avg{v}\\
\end{array}},\\
\bm{f}_{3,S} = \LRp{\begin{array}{c}
\avg{\rho}^{\log}\avg{w}\\
\avg{\rho}^{\log}\avg{u}\avg{w}\\
\avg{\rho}^{\log}\avg{v}\avg{w}\\
\avg{\rho}^{\log}\avg{w}^2 + p_{\rm avg}\\
(E_{\rm avg}+ p_{\rm avg})\avg{w}\\
\end{array}}.\nonumber
%&f^1_{1,S}(\bm{u}_L,\bm{u}_R) = \avg{\rho}^{\log} \avg{u},& &f^1_{2,S}(\bm{u}_L,\bm{u}_R) = \avg{\rho}^{\log} \avg{v},&\\
%&f^2_{1,S}(\bm{u}_L,\bm{u}_R) = f^1_{1,S} \avg{u} + p_{\rm avg},&  &f^2_{2,S}(\bm{u}_L,\bm{u}_R) = f^1_{2,S} \avg{u},&\nonumber\\
%&f^3_{1,S}(\bm{u}_L,\bm{u}_R) = f^2_{2,S},& &f^3_{2,S}(\bm{u}_L,\bm{u}_R) = f^1_{2,S} \avg{v} + p_{\rm avg},&\nonumber\\
%&f^4_{1,S}(\bm{u}_L,\bm{u}_R) = \LRp{\frac{p_{\rm avg}^{\log}}{\gamma -1} + p_{\rm avg} + \frac{\nor{\bm{u}}^2_{\rm avg}}{2}}\avg{u},& &f^4_{2,S}(\bm{u}_L,\bm{u}_R) = \LRp{\frac{p_{\rm avg}^{\log}}{\gamma -1} + p_{\rm avg} + \frac{\nor{\bm{u}}^2_{\rm avg}}{2}}\avg{v},& \nonumber
\end{gather}
where we have defined the auxiliary quantities
\begin{align}
p_{\rm avg} &= \frac{\avg{\rho}}{2\avg{\beta}}, \qquad E_{\rm avg} = \frac{\avg{\rho}^{\log}}{2(\gamma-1)\avg{\beta}^{\log}} + \frac{1}{2}\avg{\rho}^{\log}\nor{\bm{u}}^2_{\rm avg}\\
\nor{\bm{u}}^2_{\rm avg} &= 2(\avg{u}^2 + \avg{v}^2 + \avg{w}^2) - \LRp{\avg{u^2} +\avg{v^2} + \avg{w^2}}.\nonumber
\end{align}

\subsubsection{Accuracy and convergence}

As before, we test the accuracy of the proposed scheme using an isentropic vortex solution adapted to three dimensions.  We take the solution to be the extruded 2D vortex propagating in the $x_2$ direction, whose analytic expression is derived from \cite{williams2013nodal}
\begin{align*}
\rho(\bm{x},t) &= \LRp{1-\frac{(\gamma-1)}{2}\Pi^2}^{\frac{1}{\gamma-1}}\\
\bm{u}(\bm{x},t) &= \Pi \bm{r}, \\
E(\bm{x},t) &= \frac{p_0}{\gamma-1}\LRp{1-\frac{\gamma-1}{2}\Pi^2}^{\frac{\gamma}{\gamma-1}} + \frac{\rho}{2}\LRb{\bm{u}}.
\end{align*}
where $\bm{u} = (u,v,w)^T$ is the velocity vector and % $\Pi$, $\bm{r}$ are defined as
\[
\Pi = \Pi_{\max}e^{\frac{1-\bm{r}^T\bm{r}}{2}}, \qquad \bm{r} = \begin{pmatrix}
-(x_2-c_2-t)\\
x_1-c_1\\
0
\end{pmatrix}.
\]
In this problem, we take $c_1 = c_2 = 5$, $p_0 = {1}/{\gamma}$, and $\Pi_{\max} = 0.4$.  The problem is solved on the domain $[0,10]\times [0,20]\times [0,10]$.  
%\begin{figure}
%\centering
%\subfloat[$h = 1$]{\includegraphics[width=.4\textwidth]{figs/periodicCube2.png}}
%\hspace{2em}
%\subfloat[$h = 1/2$]{\includegraphics[width=.4\textwidth]{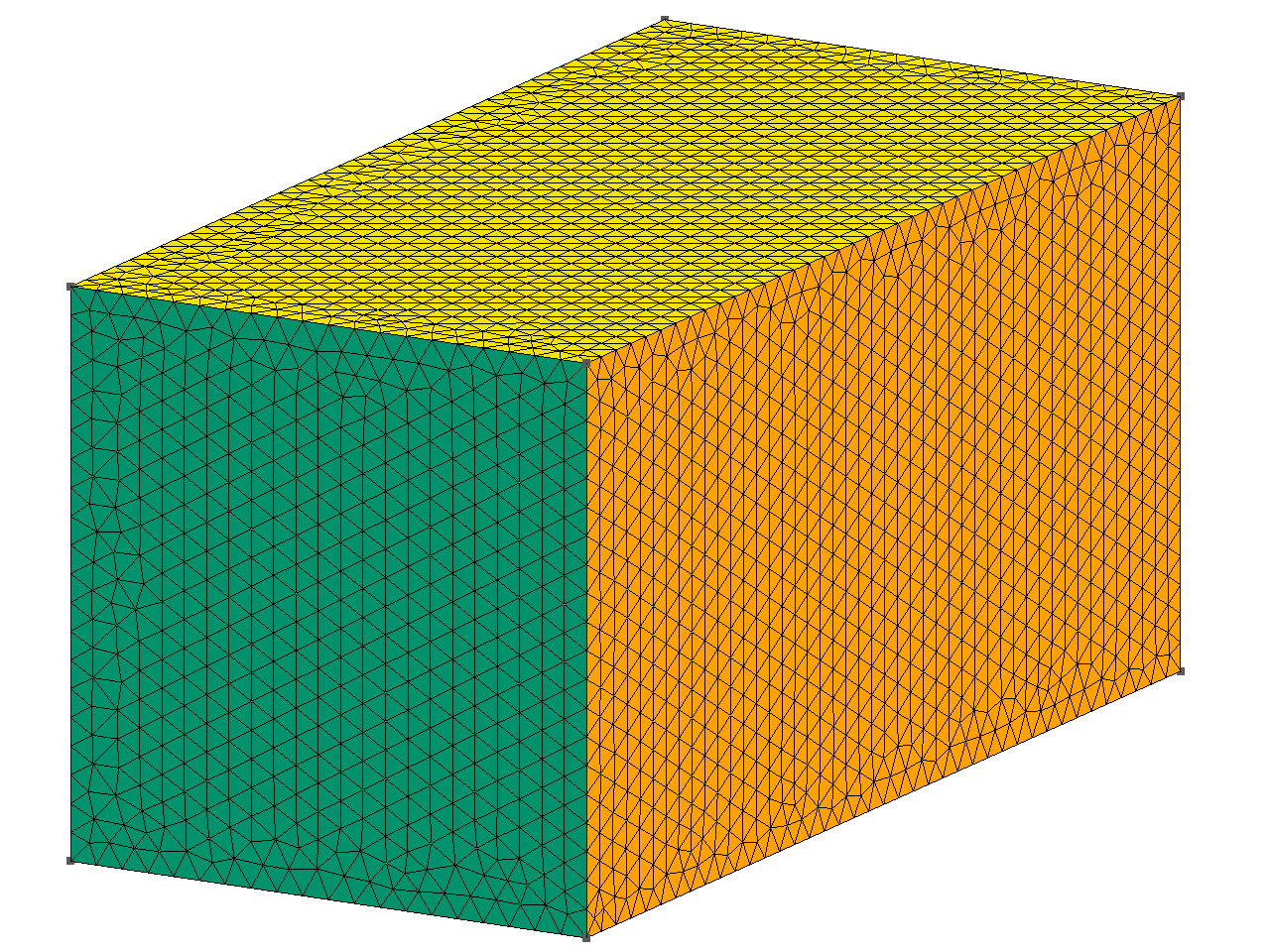}}
%\caption{Unstructured affine tetrahedral meshes used for convergence tests.}
%\label{fig:mesh3d}
%\end{figure}
We use \textsc{GMSH} to construct three unstructured meshes consisting of 1354, 9543, and 72923 affine tetrahedra, corresponding to $h = 2$, $h = 1$, and $h = 1/2$ (shown in Figure~\ref{subfig:mesh3d}).  We apply a curvilinear warping by mapping nodal positions each tetrahedron to warped nodal positions $(\tilde{x_1},\tilde{x_2},\tilde{x_3})$ via
\begin{align*}
\tilde{x_1} &= x_1 + \frac{1}{2}\sin\LRp{\pi \frac{x_1}{10}}\sin\LRp{2\pi \frac{x_2}{20}}\sin\LRp{\pi \frac{x_3}{10}},\\
\tilde{x_2} &= x_2 - \sin\LRp{2 \pi \frac{x_1}{10}}\sin\LRp{\pi \frac{x_2}{10}}\sin\LRp{2\pi \frac{x_3}{10}},\\
\tilde{x_3} &= x_3 + \frac{1}{2}\sin\LRp{\pi \frac{x_1}{10}}\sin\LRp{2\pi \frac{x_2}{10}}\sin\LRp{\pi \frac{x_3}{10}}.
\end{align*} 
The geometric terms are approximated using (\ref{eq:iconscurl2}) to ensure the satisfaction of the discrete GCL.  Figure~\ref{fig:converge3d} shows $L^2$ errors at final time $T = 5$ for $N = 2,3,4$ on both affine and curved meshes.  In both cases, we observe optimal  $O(h^{N+1})$ asymptotic rates of convergence for $N = 2,3$, while for $N = 4$ we observe a rate which is slightly higher than $O(h^{N+1/2})$ but not quite $O(h^{N+1})$.  This may be due to the fact that the time-stepper is $4$th order while the spatial discretization is $5$th order.

\begin{figure}
\centering
\subfloat[Mesh for $h = 1/2$]{\raisebox{2em}{\includegraphics[width=.4\textwidth]{}}\label{subfig:mesh3d}}
\subfloat[$L^2$ errors]{\begin{tikzpicture}
\begin{loglogaxis}[
    legend cell align=left,
    legend style={legend pos=south east, font=\tiny},
    width=.45\textwidth,    
    xlabel={Mesh size $h$},
    ylabel={$L^2$ error}, 
     ymin=1e-4, ymax=2,    
     xmin=2.5e-1, xmax=2.5,         
    grid style=dashed,
    legend entries={Affine,Curved}
] 
\addlegendimage{no markers,black}
\addlegendimage{no markers,dashed,black}

\addplot[color=blue,mark=*,semithick, mark options={solid,fill=markercolor}]
coordinates{(2,1.05519)(1,0.143515)(0.5,0.0212682)}[yshift=4pt] node[left, pos=1.05, color=blue] {$N = 2$};
\addplot[color=blue,mark=*,dashed,semithick, mark options={solid,fill=markercolor}]
coordinates{(2,1.20915)(1,0.2069)(0.5,0.0284505)};
\logLogSlopeTriangleFlip{0.45}{0.15}{0.6}{3}{blue}

\addplot[color=red,mark=square*,semithick, mark options={solid,fill=markercolor}]
coordinates{(2,0.339318)(1,0.0314342)(0.5,0.00197699)}[yshift=8pt] node[left, pos=1.05, color=red] {$N = 3$};
\addplot[color=red,mark=square*,dashed,semithick, mark options={solid,fill=markercolor}]
coordinates{(2,0.464613)(1,0.0513369)(0.5,0.00334595)};
\logLogSlopeTriangleFlip{0.45}{0.15}{0.375}{4}{red}

\addplot[color=black,mark=triangle*,semithick, mark options={solid,fill=markercolor}]
coordinates{(2,0.122229)(1,0.00488434)(0.5,0.000192453)}[yshift=8pt] node[left, pos=1.05, color=black] {$N = 4$};
\addplot[color=black,mark=triangle*,dashed,semithick, mark options={solid,fill=markercolor}]
coordinates{(2,0.184547)(1,0.0104361)(.5,0.0003960681)};
\logLogSlopeTriangle{0.45}{0.15}{0.04}{5}{black}

%\legend{$L^2$ projection,Weight-adjusted,Difference}
%\legend{Uniform, Optimal, Smoothed}
\end{loglogaxis}
\end{tikzpicture}}
\caption{Convergence of $L^2$ errors for the 3D isentropic vortex problem on unstructured affine and curved meshes, with optimal $O(h^{N+1})$ rates of convergence shown for reference.}
\label{fig:converge3d}
\end{figure}

We also compared approximations of geometric factors using degree $N$ polynomials via (\ref{eq:iconscurl2}) and degree $N-1$ polynomials via (\ref{eq:iconscurl}).  For the isentropic vortex problem on warped meshes, the approximation of the geometric factors does not impact accuracy significantly.  Utilizing (\ref{eq:iconscurl}) and approximating geometric factors with degree $N-1$ polynomials only changed the error in the $5$th significant digit, and resulted in the same asymptotic rates of convergence on curved 3D meshes.  Future work will explore the effect of a more accurate geometric approximation on curved boundaries where a solid wall boundary condition is applied, where the approximation of geometry has been shown to have a more significant effect  \cite{toulorge2016optimizing}.  

%\note{Isentropic vortex on curved and non-curved meshes}

\subsubsection{Inviscid Taylor--Green vortex}

Our last numerical experiment investigates the behavior of entropy stable DG schemes for the inviscid Taylor--Green vortex \cite{ae1937mechanism, gassner2016split, crean2018entropy}.  The domain is the periodic box $[-\pi,\pi]^3$, and the initial conditions are given as
\begin{align*}
\rho &= 1\\
u &= \sin(x_1)\cos(x_2)\cos(x_3),\\
v &= -\cos(x_1)\sin(x_2)\cos(x_3),\\ 
w &= 0,\\
p &= \frac{100}{\gamma} + \frac{1}{16} \LRp{\cos(2x_1) + \cos(2x_2)}\LRp{2+\cos(2x_3)}.
\end{align*}
The Taylor--Green vortex is used to study the transition and decay of turbulence \cite{debonis2013solutions}.  In the absence of viscosity, the Taylor--Green vortex develops smaller and smaller scales, implying that for sufficiently large times, the solution contains under-resolved features.  We study the evolution of the kinetic energy $\kappa(t)$ 
\[
\kappa(t) =\frac{1}{\LRb{\Omega}} \int_{\Omega} \rho \bm{u}\cdot\bm{u} \diff{\bm{x}},
\]
as well as the kinetic energy dissipation rate $-\pd{\kappa}{t}$, which we approximate by differencing $\kappa(t)$.  Figure~\ref{fig:tg} shows the evolution of $\kappa(t)$ over time for both affine and curvilinear meshes with $h = \pi/8$ and $N = 3$.  We also use a curved coarse mesh with element size $h = \pi$ test the convergence of the average entropy for a non-dissipative entropy conservative formulation as the timestep decreases.  In both cases, the mesh is defined by constructing nodal positions $\tilde{\bm{u}}$ from warpings of affine nodal positions $\bm{x}$ as follows 
\[
\tilde{\bm{x}} = \bm{x} + .125 \sin(x_1)\sin(x_2)\sin(x_3).
\]
The kinetic energy is plotted at 100 equally spaced times between $[0,20]$.  The simulation is stable and does not blow up despite the lack of filtering, limiting, or artificial viscosity.  The results are qualitatively similar to those reported in the literature, with the kinetic energy dissipation rate increasing around $t = 4$ and peaking with a value of roughly $0.014$ before $t = 9$ \cite{debonis2013solutions, gassner2016split}.  We also observe that, for an entropy conservative formulation, the average entropy appears to converge at a rate of $O(dt^{5})$ for a 4th order time-stepping method.  The same phenomena was also observed in \cite{chan2017discretely, crean2018entropy}.  

\begin{figure}
\centering
\subfloat[KE dissipation rate for $N=3$, $h = \pi/8$]{
\begin{tikzpicture}
\begin{axis}[
        scaled ticks=false, 
        tick label style={/pgf/number format/fixed},
	legend cell align=left,
	legend style={font=\tiny},
	width=.475\textwidth,
    xlabel={Time $t$},
    ylabel={$-\pd{\kappa}{t}$},
%    xmin=.005, xmax=1,
%    ymin=1e-10, ymax=1e-1,
ymin=-.0025, ymax=.017,
    legend pos=north east,
    xmajorgrids=true,
    ymajorgrids=true,
    grid style=dashed,
    ytick={0, .005, .01, .015},
    yticklabels={0, .005, .01, .015}    
] 
\addplot[color=blue,semithick, mark options={fill=markercolor}]
coordinates{(0.007149,-0)(0.207328,-1.12783e-05)(0.407507,-1.46618e-05)(0.607685,-2.53763e-05)(0.807864,-2.98876e-05)(1.00804,-2.65041e-05)(1.20822,-3.15793e-05)(1.4084,-2.48123e-05)(1.60858,-1.97371e-05)(1.80876,-2.42484e-05)(2.00894,-2.0301e-05)(2.20912,-1.40979e-05)(2.40929,-6.767e-06)(2.60947,1.12783e-05)(2.80965,2.98876e-05)(3.00983,6.31587e-05)(3.21001,0.000110528)(3.41019,0.000181581)(3.61037,0.000329891)(3.81054,0.00058027)(4.01072,0.000940613)(4.2109,0.00144081)(4.41108,0.00207803)(4.61126,0.00279816)(4.81144,0.00343031)(5.01162,0.00381377)(5.2118,0.00407881)(5.41197,0.00431791)(5.61215,0.00455532)(5.81233,0.00493822)(6.01251,0.00546492)(6.21269,0.00609087)(6.41287,0.00687415)(6.61305,0.00773299)(6.81323,0.00853827)(7.0134,0.00933057)(7.21358,0.010159)(7.41376,0.0111904)(7.61394,0.0123673)(7.81412,0.0132419)(8.0143,0.013865)(8.21448,0.0140928)(8.41466,0.0139739)(8.61483,0.0137714)(8.81501,0.0135272)(9.01519,0.0133541)(9.21537,0.0133011)(9.41555,0.0131477)(9.61573,0.0127451)(9.81591,0.012215)(10.0161,0.0117295)(10.2163,0.0113731)(10.4164,0.0110065)(10.6166,0.0106795)(10.8168,0.0103857)(11.017,0.0100281)(11.2172,0.00961873)(11.4173,0.00931591)(11.6175,0.00903677)(11.8177,0.0087441)(12.0179,0.00844409)(12.2181,0.00816101)(12.4182,0.0078689)(12.6184,0.00757115)(12.8186,0.00726438)(13.0188,0.00696776)(13.2189,0.00667621)(13.4191,0.00640158)(13.6193,0.00614162)(13.8195,0.0058918)(14.0197,0.00565214)(14.2198,0.00541755)(14.42,0.00519085)(14.6202,0.0049839)(14.8204,0.00477863)(15.0206,0.0045728)(15.2207,0.00437148)(15.4209,0.00419272)(15.6211,0.00402693)(15.8213,0.00386339)(16.0214,0.00371678)(16.2216,0.00359948)(16.4218,0.00349347)(16.622,0.00337222)(16.8222,0.00324083)(17.0223,0.00311451)(17.2225,0.00299835)(17.4227,0.00289571)(17.6229,0.00280323)(17.8231,0.00270906)(18.0232,0.00261432)(18.2234,0.00252071)(18.4236,0.00243612)(18.6238,0.00235717)(18.8239,0.00227371)(19.0241,0.00218913)(19.2243,0.00211187)(19.4245,0.00204025)(19.6247,0.00197089)(19.8248,0.00190548)};

\addplot[color=red, dashed,semithick, mark options={fill=markercolor}]
coordinates{(0.00662,-0)(0.20523,-1.01505e-05)(0.40384,-1.24062e-05)(0.60245,-2.31206e-05)(0.801059,-2.59402e-05)(0.999669,-2.31206e-05)(1.19828,-2.81958e-05)(1.39689,-2.19928e-05)(1.5955,-1.80453e-05)(1.79411,-2.14288e-05)(1.99272,-1.86093e-05)(2.19133,-1.63536e-05)(2.38994,-8.45875e-06)(2.58855,5.07525e-06)(2.78716,2.0301e-05)(2.98577,4.96247e-05)(3.18438,9.4738e-05)(3.38299,0.000160152)(3.5816,0.000283086)(3.78021,0.000504706)(3.97881,0.000825574)(4.17743,0.00126543)(4.37603,0.00183499)(4.57464,0.00247672)(4.77325,0.0030897)(4.97186,0.00349741)(5.17047,0.00375061)(5.36908,0.00398971)(5.56769,0.00419893)(5.7663,0.00449555)(5.96491,0.00496811)(6.16352,0.00551849)(6.36213,0.00618335)(6.56074,0.00695535)(6.75935,0.00776796)(6.95796,0.00847567)(7.15657,0.00922568)(7.35518,0.0102655)(7.55379,0.0113212)(7.7524,0.0122049)(7.95101,0.0127772)(8.14962,0.0130801)(8.34823,0.0132329)(8.54684,0.0131302)(8.74545,0.012815)(8.94406,0.0124564)(9.14267,0.0121327)(9.34128,0.0117836)(9.53989,0.0115276)(9.7385,0.0112761)(9.93711,0.0109236)(10.1357,0.0105819)(10.3343,0.0102689)(10.5329,0.0100225)(10.7315,0.00976309)(10.9302,0.00945689)(11.1288,0.00915857)(11.3274,0.00887436)(11.526,0.00861327)(11.7246,0.00831947)(11.9232,0.00802623)(12.1218,0.00775273)(12.3204,0.00746513)(12.519,0.00717584)(12.7176,0.00690742)(12.9163,0.00662038)(13.1149,0.00633448)(13.3135,0.00608184)(13.5121,0.00583767)(13.7107,0.00559631)(13.9093,0.00538653)(14.1079,0.00517958)(14.3065,0.00497093)(14.5051,0.00477976)(14.7037,0.00457393)(14.9024,0.00436246)(15.101,0.00417242)(15.2996,0.00400832)(15.4982,0.00385719)(15.6968,0.00370663)(15.8954,0.00355042)(16.094,0.00340831)(16.2926,0.00329102)(16.4912,0.00318557)(16.6898,0.00307729)(16.8884,0.00296056)(17.0871,0.00284214)(17.2857,0.00272597)(17.4843,0.00261714)(17.6829,0.00252353)(17.8815,0.00243725)(18.0801,0.00234984)(18.2787,0.00226413)(18.4773,0.00218349)(18.6759,0.0021051)(18.8745,0.00202954)(19.0732,0.00195848)(19.2718,0.00189589)(19.4704,0.00184288)(19.669,0.00179438)(19.8676,0.00174476)};

\legend{Affine, Curved}
\end{axis}\end{tikzpicture}
}
%\hspace{1em}
\subfloat[Entropy conservative convergence of $\int_{\Omega}U(\bm{u})$ ]{
\begin{tikzpicture}
\begin{loglogaxis}[
        scaled ticks=false, 
        tick label style={/pgf/number format/fixed},
	legend cell align=left,
	legend style={font=\tiny},
	width=.475\textwidth,
    xlabel={CFL},
    ylabel={Average entropy},
    legend pos=south east,
    xmajorgrids=true,
    ymajorgrids=true,
    grid style=dashed,
] 
\addplot[color=blue,mark=*,semithick, mark options={solid,fill=markercolor}]
coordinates{(0.25,6.22584e-05)(0.125,2.42033e-05)(0.0625,2.70793e-06)(0.03125,9.23535e-08)};

\addplot[color=red,dashed,mark=square*,semithick, mark options={solid,fill=markercolor}]
coordinates{(0.25,5.32822e-05)(0.125,1.5792e-05)(0.0625,9.458e-07)(0.03125,3.05056e-08)};

\logLogSlopeTriangleFlip{0.275}{0.15}{0.3}{5}{black}
%\addplot[color=red, dashed,semithick, mark options={fill=markercolor}]

\legend{Affine, Curved} %Affine
\end{loglogaxis}
\end{tikzpicture}
}
\caption{Evolution of the kinetic energy dissipation rate over time on affine and curvilinear meshes, as well as dependence of average entropy over the domain $\int_{\Omega} U(\bm{u})$ at time $T = 20$ for an entropy conservative formulation.  }
\label{fig:tg}
\end{figure}
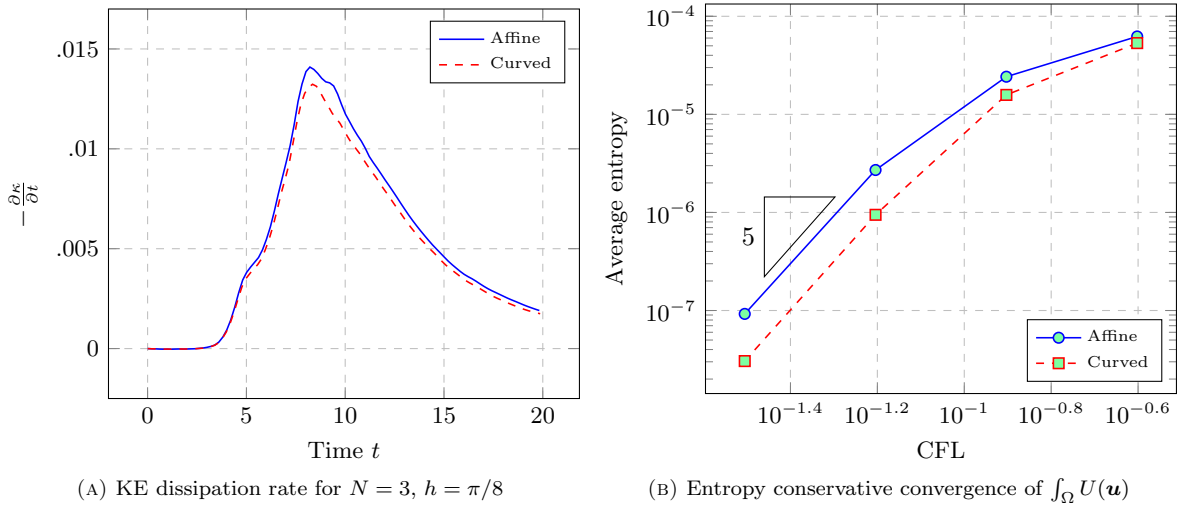

%\note{Add Taylor--Green vortex}

\section{Conclusions}

This work describes how to extend entropy conservative and entropy stable DG ``modal'' discretizations to curvilinear meshes using both weighted and weight-adjusted mass matrices.   Assuming that the geometric terms satisfy a discrete geometric conservation law, the presented schemes allow for the use of over-integration while satisfying a semi-discrete entropy equality or inequality on curved meshes.  Numerical results show the presented schemes achieve optimal rates of convergence for smooth solutions on both two and three dimensional affine and curvilinear meshes while remaining robust in the presence of under-resolved solutions such as shocks and turbulence.  

Several outstanding computational questions remain to be answered.  First, the use of weight-adjusted mass matrices is motivated by the low storage requirements and their efficient application on GPUs.  However, to guarantee discrete entropy stability, the computation of a weight-adjusted projection is necessary, which adds additional computational cost compared to the affine case.  Furthermore, numerical results suggest that, while computing the entropy-projected conservative variables $\tilde{\bm{u}}$ using the weight-adjusted projection is necessary to ensure a semi-discrete conservation of entropy, the difference between using a weight-adjusted projection and regular projection may be negligible in practice.  The necessity of the weight-adjusted projection in computing $\tilde{\bm{u}}$ should be examined further.  Secondly, on triangular and tetrahedral meshes, the proposed schemes involve more computational work than under-integrated collocation-style SBP schemes \cite{hicken2016multidimensional, chen2017entropy, crean2018entropy}.  A careful computational comparison of the presented schemes with existing methods should be done to weigh the benefits of improved accuracy with additional computational costs.  

Finally, we note that while this work has focused on triangular and tetrahedral elements, the approaches outlined here can be extended to more arbitrary pairings of (possibly non-polynomial) approximation spaces and quadratures.  For example, similar techniques can be used to construct entropy stable B-spline or Galerkin difference discretizations on curved meshes \cite{banks2016galerkin, chan2018multi}.  

\section{Acknowledgements}

The authors thank Mark H.\ Carpenter and David C.\ Del Rey Fernandez for informative discussions.  Jesse Chan is supported by NSF DMS-1719818 and DMS-1712639.  

\bibliographystyle{unsrturl}
\bibliography{dg}

\end{document}